\def\a{\alpha}
\def\b{\beta}
\def\({\left (}
\def\){\right )}
\def\<{\left\langle}
\def\>{\right\rangle}
 \newtheorem{thm}{Theorem}[section]
\newtheorem{lem}[thm]{Lemma}
\newtheorem{rem}[thm]{Remark}
\newtheorem{acknowledgement}{Acknowledgement}
\newcommand{\norm}[1]{\left\Vert#1\right\Vert}
\newcommand{\abs}[1]{\left\vert#1\right\vert}
\newcommand{\R}{\mathbb R}
\newcommand{\pfrac}[2]{\frac{\partial #1}{\partial #2}}
\numberwithin{equation}{section}
\begin{document}

\title{ The  rectified $n$-harmonic map flow with  applications to  homotopy classes}

\author {Min-Chun Hong }

\address{Min-Chun Hong, Department of Mathematics, The University of Queensland\\ Brisbane, QLD 4072, Australia}  \email{hong@maths.uq.edu.au}

\begin{abstract}
We introduce a  rectified $n$-harmonic map flow from an $n$-dimensional closed Riemannian manifold to another closed  Riemannian manifold. We  prove existence of a global
solution, which is regular except for a finite number of points,  of the rectified $n$-harmonic map flow   and establish an energy identity for the flow at each singular
time.   Finally, we present two applications of the rectified $n$-harmonic map flow to minimizing the $n$-energy functional and the Dirichlet energy functional in a homotopy
class.
\end{abstract}

\maketitle

\markright {The  rectified $n$-harmonic map flow}

\section{Introduction}

Let $(M,g)$ be an $n$-dimensional compact Riemannian manifold without
boundary, and let $(N,h)$ be another
$m$-dimensional compact Riemannian manifold without boundary (isometrically embedded into $\R^L$).
 The $n$-energy functional $E_n(u; M)$ of a map $u: (M,g)\to (N,h)$  is defined by
\[E_n(u; M)=\frac 1 n\int_M  |\nabla u |^n\,dv.\]
A map $u$ from $M$ to $N$ is said to be an  $n$-harmonic map  if
$u$ is a critical point of the $n$-energy functional; i.e.   it satisfies
\begin{equation} \mbox{div}
\left[  |\nabla
u|^{n-2} \nabla u\right]
+|\nabla
u|^{n-2}A(u)(\nabla u,\nabla u)=0\quad  \mbox { in } M,\end{equation}
where $A$ is the second fundamental form of $N$.

 When $n=2$, an  $n$-harmonic map is a harmonic map. The fundamental  question on harmonic maps, asked by Eells and Sampson \cite{ES} (see also \cite{EL}), is  whether  a
given smooth map $u_0$ can be deformed to a harmonic map  in its homotopy class $[u_0]$. Eells and Sampson \cite{ES} answered the question  for the case that the sectional
curvature of  $N$ is non-positive   by introducing the heat flow for harmonic maps. In order to solve the   Eells-Sampson question,  it is  very important to establish  global
existence of the harmonic map flow.
When $n=2$, Struwe \cite{St} proved  global existence of the weak solution  to
the harmonic map flow, where the solution is smooth except for a finite set of singularities.   Chang, Ding and Ye \cite{CDY}
constructed a  counter-example that the harmonic map flow  blows up at finite time.  Ding and Tian \cite {DT} established the energy identity of the harmonic map flow at
each blow-up time through a finite number of  harmonic maps  on $S^{2}$ (called bubbles).  Qing and Tian \cite {QT} proved  that as $t\to \infty$,  there is no neck between a
limit map $u_{\infty}$ and  bubbles. Therefore, a given map $u_0$ can be deformed into a splitting sum of  finite harmonic maps.

  When $n>2$,  Chen and Struwe  \cite {CS} showed  global existence of a weak solution of the harmonic map flow, in which the weak solution   is  partially regular and has a
complicated singular set.  In general,  it is difficult  to apply the harmonic map flow to investigate the Eells-Sampson  question. Motivated by the Eells-Simpson question, it
is interesting to ask whether  a given map $u_0\in C^{\infty}(M, N)$  can be deformed to an $n$-harmonic map in the homotopy class $[u_0]$.
Related to this question,
Hungerbuhler \cite {Hung}   investigated the $n$-harmonic map flow in the following equation:
\begin{equation} \label{n-flow}\frac {\partial u}{\partial t}= \mbox{div} \left[  |\nabla
u|^{n-2} \nabla u\right]
+|\nabla
u|^{n-2}A(u)(\nabla u,\nabla u)\end{equation} with initial value $u_0$, and  generalized the result of Struwe \cite{St} to
 prove that there
exists a global weak solution $u: M\times [0,+\infty)\to N$ of  the $n$-harmonic map flow (\ref{n-flow})  such that $u\in
{\bf C}^{1,\alpha }(M\times (0,+\infty )\backslash
\{\Sigma_k\times T_k \}_{k=1}^L)$ for a finite
number of singular times $\{T_k\}_{k=1}^L$ and a finite number of singular
closed sets $\Sigma_k \subset M$ for $k=1,..., L$  with an integer
$L$, depending only $M$ and $u_0$. Chen,  Cheung,  Choi, Law \cite {CCCL} constructed a counter-example to show that the $n$-harmonic map  flow  (\ref{n-flow})  blows up at finite time for
$n=3$.
However,  it has been an open question  whether the singular set $\Sigma_k$ of the  $n$-harmonic map flow   at each singular time $T_k$ is finite.  Without the finiteness of
the singular set $\Sigma_k$, it
 is difficult to control the loss of the energy at the singular time $T_k$.  In order to overcome this difficulty,
 we introduce a rectified $n$-harmonic map flow  in the following equation:
\begin{equation} \label{NA}
(1-a+a|\nabla u|^{n-2}) \frac {\partial u}{\partial t}= \mbox{div} \left[  |\nabla
u|^{n-2} \nabla u\right]
+|\nabla
u|^{n-2}A(u)(\nabla u,\nabla u)\end{equation}
with initial value $u(0)=u_0$ with a  constant $a\in [0, 1]$.
In particular, when $a=0$, the flow (\ref{NA}) is the standard $n$-harmonic map flow. When $a=1$,  the flow
(\ref{NA}) is an evolution equation involving the normalized   $n$-Laplacian (e.g. \cite {Do}).

In this paper, we firstly prove:

\begin {thm}\label{Theorem 1}For each $a\in (0, 1]$, there
exists a global weak solution $u: M\times [0,+\infty)\to N$ of (\ref{NA}) with initial value $u_0\in W^{1,n}(M)$ in which there are  finite
times $\{T_k\}_{k=1}^L$ and   finite   singular points $\{x^{j,k}\}_{j=1}^{l_k}$   such that
$u$ is regular in $M\times (0,+\infty )\backslash
\{\{x^{j,k}\}_{j=1}^{l_k}\times T_k \}_{k=1}^L$ in the following sense:
\begin{align*}
&u\in
 C_{loc}^{0,\alpha }(M\times (0,+\infty )\backslash
\{ \{x^{1,k} \cdots, x^{l_k,k}\}\times T_k \}_{k=1}^L),\\
& \nabla u\in L_{loc}^{\infty} (M\times (0,+\infty )\backslash
\{  \{x^{1,k} \cdots, x^{l_k,k}\}\times T_k \}_{k=1}^L) .\end{align*}
As $t\to T_k$, $u(x,t)$ strongly converges to $u(x,T_k)$ in $W_{loc}^{1,n+1}(M\backslash \{x^{1,k} \cdots, x^{l_k,k}\})$.
\end{thm}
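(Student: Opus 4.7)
The plan is to combine an approximation scheme with an energy method tailored to the rectified coefficient $(1-a+a|\nabla u|^{n-2})$. First, I would regularize the degenerate $n$-Laplacian by $\mbox{div}\bigl((|\nabla u|^2+\eps^2)^{(n-2)/2}\nabla u\bigr)$ and relax the $N$-valued constraint through a Ginzburg--Landau penalty $\eps^{-2}\mathrm{dist}^2(u,N)$, obtaining a strictly parabolic quasilinear system with smooth short-time solutions $u_\eps$, extendable so long as $\nabla u_\eps$ stays bounded. Testing the regularized equation with $\partial_t u_\eps$ yields
\[
E_n(u_\eps(t))+\int_0^t\!\!\int_M (1-a+a|\nabla u_\eps|^{n-2})|\partial_s u_\eps|^2\,dv\,ds\le E_n(u_0),
\]
and the decisive point is the pointwise weight $|\nabla u_\eps|^{n-2}$ in the dissipation. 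A direct scaling check shows that the dominant part of the rectified equation respects the natural parabolic scaling $u(\lambda x,\lambda^2 t)$, under which the dissipation integral is scale invariant; this is in sharp contrast to the anomalous scaling $u(\lambda x,\lambda^n t)$ of the standard $n$-harmonic map flow.

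Following the strategies of Struwe \cite{St} and Hungerb\"uhler \cite{Hung}, adapted to the new coefficient, I would next prove a small-energy regularity lemma: there exists $\eps_0>0$ such that whenever $\int_{B_R(x_0)}|\nabla u_\eps|^n\,dv<\eps_0$ on a parabolic cylinder, one has uniform $C^{0,\alpha}$ and $L^\infty$-gradient estimates on a smaller cylinder. Combined with the energy decay, this produces at most $L\le E_n(u_0)/\eps_0$ singular times $\{T_k\}_{k=1}^L$ in the limit $\eps\to 0$.

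The finiteness of the spatial singular set at each $T_k$ is the central new point, and this is where the improved scaling pays off. The dissipation bound together with the parabolic invariance allows a Struwe-type monotonicity formula on cylinders $B_R(x)\times[t-R^2,t]$; a blow-up analysis at any putative concentration point then yields a limit that is time-independent (because the rescaled dissipation forces $\partial_t u\equiv 0$ in the limit) and hence a nontrivial $n$-harmonic sphere $\omega:\R^n\to N$ carrying at least $\eps_0$ of $n$-energy. The total $n$-energy being finite forces $l_k\le E_n(u(T_k^-))/\eps_0$. Away from $\{x^{j,k}\}$, the uniform gradient bound together with the $L^2$ control on $|\nabla u|^{(n-2)/2}\partial_t u$ then upgrades weak convergence to the claimed strong convergence $u(\cdot,t)\to u(\cdot,T_k)$ in $W^{1,n+1}_{loc}(M\setminus\{x^{1,k},\dots,x^{l_k,k}\})$ as $t\to T_k$.

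The main technical obstacle I expect is the last step: verifying that the rectified time-derivative term $(1-a+a|\nabla u|^{n-2})\partial_t u$ genuinely vanishes in the parabolic blow-up limit, so that the extracted bubble $\omega$ is truly $n$-harmonic on $\R^n$ and carries quantized energy. This requires tracking both the subleading $(1-a)\partial_t u$ term, which rescales with the wrong exponent but should be absorbed by the dominant dissipation, and a delicate neck analysis around each concentration point in order to exclude infinitesimal accumulations of singularities. It is precisely this step that is unavailable for the standard $n$-harmonic map flow and which the rectification is designed to make work.
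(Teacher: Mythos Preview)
Your scaling observation is correct and is indeed the heart of the matter: the weighted dissipation $a\int|\nabla u|^{n-2}|\partial_t u|^2$ is scale invariant under $u(\lambda x,\lambda^2 t)$, and this is exactly what fails for the ordinary $n$-harmonic map flow. But your route to the finiteness of the spatial singular set is considerably heavier than the paper's, and your identification of the ``main technical obstacle'' belongs to Theorem~\ref{Theorem 2}, not Theorem~\ref{Theorem 1}.

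The paper does \emph{not} use blow-up, bubble extraction, or energy quantization to prove Theorem~\ref{Theorem 1}. The finiteness of $\{x^{j,k}\}_{j=1}^{l_k}$ follows directly, exactly as in Struwe's two-dimensional argument, from the two-sided local energy inequality of Lemma~\ref{Local}. The key step is (\ref{Local2}): for $s<\tau$,
\[
\int_{B_R}e_\varepsilon(u)(\cdot,s)\,dv-\int_{B_{2R}}e_\varepsilon(u)(\cdot,\tau)\,dv
\le C(a)\,\mathcal D+C(a)\Bigl(\tfrac{\tau-s}{R^2}\,E(u_0)\,\mathcal D\Bigr)^{1/2},
\]
with $\mathcal D$ the global dissipation on $[s,\tau]$. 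This is obtained by the elementary splitting
\[
|\nabla u|^{n-1}|\partial_t u|\,|\nabla\varphi|
=\bigl(|\nabla u|^{(n-2)/2}|\partial_t u|\bigr)\cdot\bigl(|\nabla u|^{n/2}|\nabla\varphi|\bigr)
\]
followed by Cauchy--Schwarz; the first factor lies in $L^2$ precisely because $a>0$, and the second is controlled by $R^{-1}E_n(u_0)^{1/2}$. With (\ref{Local1})--(\ref{Local2}) in hand, singular points are characterized by $\liminf_{t\to T_k}E_n(u(t);B_R)\ge\varepsilon_0$ for every $R>0$, and superadditivity of $\liminf$ over disjoint balls gives $l_k\le E_n(u_0)/\varepsilon_0$ immediately. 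No monotonicity formula, no parabolic blow-up, and no extraction of $n$-harmonic spheres is needed at this stage.

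Two smaller differences: the paper's regularization keeps the target $N$ and adds $\tfrac\varepsilon2|\nabla u|^2$ to the energy (yielding the nondegenerate system (\ref{APH})), rather than a Ginzburg--Landau penalty; and the $\varepsilon$-regularity (Lemmas~\ref{higher-regularity}--\ref{boundness}) is proved by a Moser iteration tailored to the extra term $a|\nabla u|^{n-2}\partial_t u$, not via scaling monotonicity. The blow-up analysis you sketch---showing the rescaled limit is time-independent and extracting a nontrivial $n$-harmonic sphere---is carried out in Section~3 as the proof of the energy identity (Theorem~\ref{Theorem 2}), after Theorem~\ref{Theorem 1} has already secured the finiteness of the singular set.
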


Theorem \ref{Theorem 1} generalized  the result of Struwe \cite{St}.
For the proof of Theorem \ref{Theorem 1}, one of key ideas  is to obtain an $\varepsilon$-regularity estimate by improving  the delicate proof of Hungerbuhler in  \cite{Hung}
for the case of  $a=0$ based on a variant of Moser's iteration.
Since the term $|\nabla u|^{n-2}\partial_t u$ in the flow (\ref{NA})  causes an extra difficulty,   we have to carry out much  more complicated analysis   to obtain the
boundedness of $|\nabla u|$ (see Lemma   \ref{higher-regularity}).

\begin{rem}
We would like to point out that the rectified $n$-harmonic map flow   is related to an evolution equation involving the normalized   $p$-Laplacian (e.g. \cite {Do}).
It will be very interesting if some can prove that the solution of the flow (\ref {NA}) is $C^{1,\alpha}$. \end{rem}

Secondly, we generalize    the result of Ding-Tian \cite {DT} from two-dimensional case to $n$-dimensional cases and prove:

\begin{thm}\label{Theorem 2} For   each $a\in (0, 1]$, let $u: M\times [0,+\infty)\to N$ be a solution of (\ref{NA}) with initial value $u_0$ in Theorem \ref{Theorem 1}.
Let $T_k$ be the above singular time.
Then, there are a finite number of
$n$-harmonic maps $\{\omega_{i,k}\}_{i=1}^{m_k}$ (also called bubbles) on $S^{n}$ such that
 \[\lim_{t \nearrow T_k}  E_n(u(t); M)= E_n(u(\cdot ,T_k); M)+\sum_{i=1}^{m_k}   E_n  (\omega_{i,k} , S^{n}). \]
\end{thm}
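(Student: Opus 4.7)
The plan is to follow the Ding--Tian blow-up/bubble-tree scheme, adapted to the quasilinear flow~(\ref{NA}). Fix a singular time $T_k$ and a singular point $x^{j,k}$. By Theorem~\ref{Theorem 1} and the standard defect-measure argument, the densities $|\nabla u(\cdot,t)|^n\,dv$ converge weakly as $t\nearrow T_k$ to $|\nabla u(\cdot,T_k)|^n\,dv+\sum_{j=1}^{l_k}\nu_j\,\delta_{x^{j,k}}$ with $\nu_j\ge\varepsilon_0$, where $\varepsilon_0$ is the $\varepsilon$-regularity threshold used in Theorem~\ref{Theorem 1}. The theorem reduces to identifying each atom $\nu_j$ with a finite sum of bubble energies.

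For a fixed $j$ I perform the standard blow-up. Choose $t_i\nearrow T_k$, $x_i\to x^{j,k}$ and $r_i\to 0$ so that
\[ \int_{B_{r_i}(x_i)}|\nabla u(\cdot,t_i)|^n\,dv=\tfrac{\varepsilon_0}{2}=\sup_{x,\,t\le t_i}\int_{B_{r_i}(x)}|\nabla u(\cdot,t)|^n\,dv, \]
and set $u_i(y,s)=u(x_i+r_i y,\,t_i+r_i^n s)$. The global dissipation
\[ \int_0^{T_k}\!\!\int_M (1-a+a|\nabla u|^{n-2})\,|\partial_t u|^2\,dv\,dt\le E_n(u_0;M) \]
gives, after the parabolic change of variable, $\int\bigl(|\partial_s u_i|^2+|\nabla_y u_i|^{n-2}|\partial_s u_i|^2\bigr)\to 0$ on every bounded $(y,s)$-cylinder, so the time-derivative terms in~(\ref{NA}) drop out in the limit and any subsequential limit $\omega$ is a weak $n$-harmonic map on $\R^n$. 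The normalisation at scale $r_i$ together with the $\varepsilon$-regularity of Theorem~\ref{Theorem 1} upgrade the convergence to strong $W^{1,n}_{\loc}\cap C^0_{\loc}$, so $\omega$ is non-constant with $\int_{B_1}|\nabla\omega|^n\ge\varepsilon_0/2$. Since $E_n$ is conformally invariant in dimension $n$ and $\omega$ has finite energy, the removable-singularity theorem for $n$-harmonic maps extends $\omega$ to an $n$-harmonic $\omega_{1,k}:S^n\to N$.

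The main difficulty is the \emph{no-neck} estimate
\[ \lim_{R\to\infty}\lim_{\delta\to 0}\lim_{i\to\infty}\int_{B_\delta(x^{j,k})\setminus B_{R r_i}(x_i)}|\nabla u(\cdot,t_i)|^n\,dv=0, \]
which is what converts the weak energy splitting into the sharp identity of the theorem. Following the strategy of Lin--Wang for stationary $n$-harmonic maps combined with the parabolic arguments of Qing--Tian and Ding--Tian, I would cover the neck by dyadic annuli, use Theorem~\ref{Theorem 1} to produce an $L^\infty$ gradient bound on each annulus, and exploit a Pohozaev-type identity for~(\ref{NA}) to control the tangential part of $\nabla u$ by its radial part plus a $\partial_t u$-error that is summable in time by the dissipation estimate. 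The rectification term $(1-a+a|\nabla u|^{n-2})\partial_t u$ contributes only a manifestly integrable remainder to the Pohozaev identity, so the quasilinear neck argument essentially reduces to the stationary case. The principal obstacle is the genuinely degenerate quasilinear structure of the $n$-Laplacian: one cannot linearise as in $n=2$, and the possible degeneracy of $|\nabla u|^{n-2}$ must be absorbed by the annulus-wise $L^\infty$ bound coming from Theorem~\ref{Theorem 1}.

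Once the no-neck estimate is established, the energy on $B_\delta(x^{j,k})$ at time $t_i$ decomposes, up to $o(1)$, into the full energy of $\omega_{1,k}$ on a large ball of $\R^n$ plus a vanishing neck remainder. If a strict residual defect $\nu_j-E_n(\omega_{1,k};S^n)\ge\varepsilon_0$ persists, I iterate the blow-up at a finer scale around a new sequence of concentration points, extract $\omega_{2,k}$, and continue. Each extraction removes at least $\varepsilon_0$ of energy from a defect bounded by $E_n(u_0;M)$, so the induction terminates after finitely many steps and yields the finite collection $\{\omega_{i,k}\}_{i=1}^{m_k}$ with the claimed identity.
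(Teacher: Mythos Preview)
Your overall architecture---defect measure, blow-up, bubble tree, no-neck---is exactly the Ding--Tian scheme the paper follows, and your outline is sound. Two points of comparison are worth noting.

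First, and most importantly, the paper does \emph{not} carry out the Pohozaev/dyadic-annulus analysis you sketch for the neck. Instead it observes that the sequence $u_i=u(\cdot,t_i)$ is a sequence of \emph{approximate $n$-harmonic maps} with tension field $h_i=(1-a+a|\nabla u_i|^{n-2})\partial_t u_i$, and shows via H\"older's inequality against the dissipation estimate that $\|h_i\|_{L^{n/(n-1)}(M)}$ is uniformly bounded. This places the sequence squarely in the framework of Wang--Wei's Theorem~B, which gives the no-neck identity as a black box under exactly the small-annulus-energy condition the bubble-tree construction produces. Your direct Pohozaev approach is essentially what Wang--Wei do internally, so it would work, but you would be re-deriving their result; the paper's route is considerably shorter because it isolates the correct hypothesis ($L^{n/(n-1)}$ tension bound) and cites out.

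Second, a minor technical remark: you rescale time by $r_i^n$, which is the natural parabolic scaling for the \emph{unrectified} flow ($a=0$). For $a>0$ the dominant coefficient on $\partial_t u$ is $a|\nabla u|^{n-2}$, and the paper accordingly rescales time by $r_i^2$; the rescaled equation then reads $\bigl(r_i^{n-2}(1-a)+a|\nabla\tilde u|^{n-2}\bigr)\partial_{\tilde t}\tilde u=\cdots$, with the first coefficient vanishing. Your $r_i^n$-rescaling still lets the time-derivative terms drop out in the limit (the dissipation integral controls both pieces), so the bubble extraction survives, but the paper's scaling is the one adapted to the rectified structure and makes the analysis cleaner.
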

For the proof of  the energy identity, Wang and Wei \cite {WW} proved an energy identity for a sequence of approximate $n$-harmonic maps by reducing multiple bubbles to a single bubble. In
order to make proofs more clear, we give a  detailed procedure of bubble-neck decomposition based on the method of Ding-Tian \cite{DT} and then prove the energy identity.

\medskip

Next, we will present some applications of the related $n$-flow to minimizing the $n$-energy functional in a homotopy class $[u_0]$.
When $n=2$, Lemaire \cite {Lemaire} and Schoen-Yau
\cite {SY} established  existence results of harmonic maps by minimizing the Dirichet energy  in a homotopy
class under the  topological condition $\pi_2(N)=0$. In \cite{SU}, Sacks and Uhlenbeck established
many existence results
 of minimizing harmonic maps in their homotopy classes by introducing
the `Sacks-Uhlenbeck functional'.
Recently, the author and Yin \cite {HY} introduced the Sacks-Uhlenbeck flow on Riemannian surfaces to provide a new proof of the energy identity of a minimizing sequence in a
homotopy class $[u_0]$. A similar approach on the Yang-Mills $\alpha$-flow on $4$-manifolds has been obtained by the author, Tian and Yin \cite {HTY}. Expanding the idea in
\cite {HY} with
 applications of a rectified $n$-flow,  we prove:
\begin{thm}\label{Theorem 4} For a  homotopy class $[u_0]$, let $\{u_{k}\}_{k=1}^{\infty}$ be a minimizing sequence of $E_{n}$ in the homotopy class $[u_0]$ and   $u$ the weak
limit in $W^{1,n}(M,N)$. Then, there is a  finite  set $\Sigma$ of singular points in $M$  so that as $k\to \infty$, $u_k$ converges strongly  to $u$ in $W^{1,n}_{loc}
(M\backslash\Sigma, N)$  and there are a finite number of $n$-harmonic maps
 $\{\omega_i\}_{i=1}^l$ on $S^{n-1}$ such that
\[\lim_{k \to \infty }  E_n(u_k; M)= E_n(u_{\infty}; M)+\sum_{i=1}^l   E_n  (\omega_i , S^{n-1}). \]
 If $\pi_n (N)=0$,  the singular set $\Sigma$ is empty and there is a minimizing map of the $n$-energy functional in  the homotopy class $[u_0]$.
 \end{thm}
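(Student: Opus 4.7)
The plan is to combine the rectified $n$-harmonic map flow of Theorem \ref{Theorem 1} with the bubble-neck analysis of Theorem \ref{Theorem 2}, in the spirit of the Sacks--Uhlenbeck flow approach of Hong--Yin \cite{HY}. The first step is to replace the minimizing sequence $\{u_k\}$ by an improved sequence of approximate $n$-harmonic maps. For each $k$, let $u_k(\cdot, t)$ denote the global solution of (\ref{NA}) with initial datum $u_k$ provided by Theorem \ref{Theorem 1}. Combining the energy inequality
\[
E_n(u_k(\cdot, T)) + \int_0^T \int_M (1-a+a|\nabla u_k|^{n-2})|\partial_s u_k|^2 \, dv \, ds \le E_n(u_k)
\]
with a pigeonhole argument on growing time intervals yields times $t_k$ for which $\int_M (1-a+a|\nabla u_k(\cdot, t_k)|^{n-2})|\partial_t u_k(\cdot, t_k)|^2\,dv \to 0$. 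Setting $\tilde u_k := u_k(\cdot, t_k)$, the energy decay along the flow gives $E_n(\tilde u_k) \le E_n(u_k)$, so $\{\tilde u_k\}$ is still minimizing while each $\tilde u_k$ is an approximate $n$-harmonic map in a quantified sense.

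Theorem \ref{Theorem 1} next asserts that $u_k$ is $C^{0,\alpha}_{loc}$ in $M \times (0,\infty)$ except at finitely many space-time singular points and that the convergence at each singular time is strong in $W^{1,n+1}_{loc}$; Sobolev embedding on the $n$-dimensional manifold $M$ upgrades this to $C^0_{loc}$ convergence, so the flow preserves the homotopy class away from the bubble points. Thus $\tilde u_k$ differs from $u_k$ in homotopy only by the bubbles formed along the flow, which represent elements of $\pi_n(N)$. Passing to a subsequence, let $u$ be the weak $W^{1,n}$ limit of $\tilde u_k$ and set
\[
\Sigma := \bigcap_{r > 0} \Bigl\{ x \in M : \liminf_{k \to \infty} \int_{B_r(x)} |\nabla \tilde u_k|^n \, dv \ge \varepsilon_0 \Bigr\}
\]
with $\varepsilon_0$ from the $\varepsilon$-regularity of Lemma \ref{higher-regularity}; the uniform energy bound forces $\Sigma$ to be finite. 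The $\varepsilon$-regularity yields uniform $C^{0,\alpha}$ bounds on $\tilde u_k$ away from $\Sigma$, so $\tilde u_k \to u$ strongly in $W^{1,n}_{loc}(M \setminus \Sigma, N)$. At each $p \in \Sigma$, suitable rescalings extract nontrivial $n$-harmonic bubbles on $S^n$ via the removable singularity theorem, and iterating the bubble-neck decomposition of Theorem \ref{Theorem 2} exhausts the concentrated energy and delivers the claimed energy identity.

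Finally, if $\pi_n(N) = 0$, then every nonconstant $n$-harmonic map $S^n \to N$ would represent a nontrivial class in $\pi_n(N)$, which is impossible; hence no bubble can form, $\Sigma = \emptyset$, and strong convergence $\tilde u_k \to u$ in $W^{1,n}(M, N)$ gives $E_n(u) = \inf_{[u_0]} E_n$. The homotopy preservation step then ensures $u \in [u_0]$, so $u$ is the desired minimizer. The main obstacle is the no-neck part of the bubble-neck decomposition in the degenerate $n$-Laplace setting: unlike the two-dimensional harmonic map case, the $n$-Laplacian is nonlinear, so Pohozaev-type identities on neck annuli require careful localization. The uniform smallness of $(1-a+a|\nabla \tilde u_k|^{n-2})^{1/2} \partial_t u_k$ produced in the first step furnishes precisely the error term needed to adapt the three-annulus argument of Qing--Tian \cite{QT} and Ding--Tian \cite{DT} to the $n$-harmonic flow, just as in the proof of Theorem \ref{Theorem 2}.
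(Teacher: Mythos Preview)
Your approach differs from the paper's in a structurally important way: the paper does \emph{not} run the rectified flow (\ref{NA}) on the minimizing sequence. Instead it introduces a further regularized functional $E_{n,\varepsilon}(u)=\int_M \tfrac{\varepsilon}{2}|\nabla u|^2+\tfrac1n|\nabla u|^n+\tfrac{\varepsilon}{n+1}|\nabla u|^{n+1}\,dv$ and its flow (equation (\ref{34})), which for each fixed $\varepsilon_i>0$ has a \emph{global smooth} solution with initial value $u_i$. Global smoothness guarantees the homotopy class is preserved for all $t$, and then the minimizing property forces $\int_0^s\int_M(1-a+\varepsilon_i+a|\nabla u_{\varepsilon_i}|^{n-2}+\varepsilon_i|\nabla u_{\varepsilon_i}|^{n-1})|\partial_t u_{\varepsilon_i}|^2\,dv\,dt\to 0$ on a \emph{fixed} time interval. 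Your pigeonhole ``on growing time intervals'' with the flow (\ref{NA}) is problematic: once $t_k$ exceeds the first singular time $T_1^k$, bubbles may carry away nontrivial homotopy and you lose the lower bound $E_n(u_k(\cdot,t_k))\ge\inf_{[u_0]}E_n$ that drives the tension to zero. Separately, the theorem asserts strong $W^{1,n}_{loc}$ convergence of the \emph{original} sequence $u_k$, not of your rectified $\tilde u_k$; the paper obtains this by sandwiching $\int_{B_\rho}|\nabla u_i|^n$ between $\int_{B_\rho}|\nabla u|^n$ and $\int_{B_R}|\nabla u|^n$ via the backward local energy inequality (Lemma~\ref{5.1}) and then letting $R\to\rho$. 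Your outline never returns from $\tilde u_k$ to $u_k$.

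Your argument for the case $\pi_n(N)=0$ contains a genuine error. The assertion that ``every nonconstant $n$-harmonic map $S^n\to N$ would represent a nontrivial class in $\pi_n(N)$'' is false: the totally geodesic equatorial inclusion $S^n\hookrightarrow S^{n+1}$ is a nonconstant $n$-harmonic map (since $|\nabla u|$ is constant, $n$-harmonicity reduces to harmonicity), yet $\pi_n(S^{n+1})=0$ and the map is null-homotopic. Thus $\pi_n(N)=0$ does not by itself exclude bubbles. The paper's argument is entirely different and follows Sacks--Uhlenbeck: near each putative concentration point one builds a competitor $v_i$ equal to $u$ on $B_{\rho/2}(x_1)$ and to $u_{\varepsilon_i}(\cdot,s)$ outside $B_\rho(x_1)$, interpolating via the exponential map on the annulus; the hypothesis $\pi_n(N)=0$ is used only to conclude $v_i\in[u_0]$, and then the minimizing inequality $E_n(u)\le\limsup E_{n,\varepsilon_i}(u_{\varepsilon_i}(s))\le\limsup E_{n,\varepsilon_i}(v_i)=E_n(u)$ forces strong $W^{1,n}(M)$ convergence and hence $\Sigma=\emptyset$.
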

 We would like to point out that Duzaar and  Kuwert \cite{DK} studied the decomposition of  a minimizing sequences of the $n$-energy functional in a homotopy class $[u_0]$
with $N=S^l$, which could be used to prove an energy identity for the minimizing sequence.  Our proof is completely different from one in \cite {DK}. By a modification of the
above $n$-harmonic flow,  we follow  the idea of the $\alpha$-flow \cite {HY} to  rectify a new minimizing sequence $\{\tilde u_k\}_{k=1}^{\infty}$,
 having the same weak limit $u$ of the  minimizing sequence $\{u_k\}_{k=1}^{\infty}$ in the same homotopy class.

\medskip

Furthermore, in order to prove the existence of a harmonic map  in  a given homotopy class $[u_0]$, it is a  nature way to minimize the Dirichlet functional in the  homotopy
class. Indeed, there were successful results  for $n=2$, which were mentioned above (\cite {Lemaire}, \cite {SY} and \cite {SU}). In higher dimensions, it is very challenging
to minimize the Dirichlet functional in a homotopy class.
White \cite
{W} showed that if $d$ is  the greatest integer strictly less than $p$, a homotopy equivalence is well defined
for neighboring maps after restriction to the $d$-skeleton of $M$
and  there exists a minimizer of the $p$-energy $ E_p(u; M)=\tfrac 1 p\int_M |\nabla u|^pdv$ with prescribed
$d$-homotopy type. White \cite {W} raised an open problem about the partial regularity of the minimum solution of the $p$-energy with prescribed
$d$-homotopy type. In particular, even for $p=2$, the partial regularity theory of Schoen-Uhlenbeck  \cite{SU}  (also Giaquinta-Giusti  \cite {GG})
on an energy minimizing map $u$  in $W^{1,2}(M,N)$ cannot be applied   since
the Sobolev space $W^{1,2}(M,N)$ cannot be approximated by
smooth maps and  a minimizing
map of the Drichlet in $W^{1,2}(M,N)$  is not  in the homotopy class.

Let $\{u_k\}_{k=1}^{\infty}$ be a minimizing sequence of the $p$-energy $E_p$ in the homotopy class $[u_0]$ for $2\leq p\leq n$ and   let $u\in W^{1,p}(M,N)$
be the weak limit  of the minimizing sequence.  Related to the above White problem, it is a very interesting problem whether the limit map $u$ is a weakly $p$-harmonic map
and partially regular.  Motivated by recent results of \cite {GHY} and \cite {HY1},
we  partially answer  the  question by applying a modified  $n$-flow and prove:

 \begin{thm} \label{Theorem 5} Let $p$ be a number with $2\leq p\leq n$. Assume that $N$ is a homogenous  Riemannian manifold  without
boundary. For a given homotopy class $[u_0]$, let $\{u_i\}_{i=1}^{\infty}$ be a minimizing sequence of the $p$-energy $E_p(u; M)$ in the homotopy class $[u_0]$.  Then, there
is a subsequence of $\{u_i\}_{i=1}^{\infty}$ such that
 $u_i$ weakly converges  to  a weak $p$-harmonic map $u$.  Moreover, $u$ belongs to $C^{1,p} (M\backslash\Sigma, N)$ for a closed singular set  $\Sigma\subset M$ and $\mathcal
H^{n-p}(\Sigma)<\infty$, where $\mathcal H^{n-p}$ denotes the Hausdorff measure.
 \end{thm}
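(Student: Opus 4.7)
The plan is to combine the rectified $n$-harmonic flow from Theorem \ref{Theorem 1} with the homogeneity of the target, using the flow as a regularization device for the minimizing sequence and exploiting Killing-field conservation laws to identify and regularize the weak limit.

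First I would introduce a rectified flow adapted to $E_p$ — a straightforward modification of (\ref{NA}) with the exponent $n$ replaced by $p$ and the diffusion weight adjusted accordingly — and apply it for a short time to each $u_i$. Because the flow decreases $E_p$ and preserves the homotopy class (by continuity of $[u_0]$ along the flow, modulo the bubble points supplied by the analog of Theorem \ref{Theorem 1}), I obtain a new minimizing sequence $\tilde u_i\in [u_0]$ with the same weak $W^{1,p}$ limit $u$, smooth away from finitely many bubble points, and approximately critical for $E_p$ in a dual norm. In particular $\tilde u_i$ is approximately stationary for $E_p$.

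Next I would show that $u$ is weakly $p$-harmonic. Since $|\nabla u|^{p-2}\nabla u$ is not weakly continuous, one cannot pass directly to the limit in the Euler--Lagrange equation. The homogeneity of $N$ supplies the remedy: for every Killing field $X$ on $N$, testing the Euler--Lagrange equation for $\tilde u_i$ against $X(\tilde u_i)$ yields the Noether-type conservation law
\[
\mathrm{div}\bigl(|\nabla \tilde u_i|^{p-2}\langle \nabla \tilde u_i,\, X(\tilde u_i)\rangle\bigr)=o(1).
\]
These identities are linear in $|\nabla \tilde u_i|^{p-2}\nabla \tilde u_i$ and divergence-structured, so they survive weak $W^{1,p}$ convergence together with locally smooth convergence off the finite bubble set. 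Since the Killing fields span $T_qN$ at every $q\in N$, the system of conservation laws is equivalent to the full $p$-harmonic map equation, and $u$ is therefore weakly $p$-harmonic.

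Finally, the limit $u$ inherits stationarity for $E_p$ from the minimizing property of $\{u_i\}$ and the flow approximation, giving the monotonicity formula for $r^{p-n}\int_{B_r(x)}|\nabla u|^p\,dv$. An $\varepsilon$-regularity theorem for stationary $p$-harmonic maps into homogeneous targets, obtained by adapting the Moser-type iteration behind Lemma \ref{higher-regularity}, then yields $C^{1,\alpha}$ estimates on the set where this density is small. A standard Hardt--Lin covering argument shows that the complementary singular set
\[
\Sigma=\Bigl\{x\in M:\ \liminf_{r\to 0}\ r^{p-n}\!\int_{B_r(x)}|\nabla u|^p\,dv\geq \varepsilon_0\Bigr\}
\]
is closed and satisfies $\mathcal H^{n-p}(\Sigma)<\infty$, so that $u\in C^{1,\alpha}(M\setminus\Sigma,N)$. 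The hardest step will be the passage to the weak limit in the $p$-harmonic equation, which requires both the bubble-free smoothness of $\tilde u_i$ furnished by the analog of Theorem \ref{Theorem 1} and the homogeneity of $N$; neither ingredient alone suffices. A secondary obstacle is calibrating the rectified flow to $E_p$ so that (i) $E_p$ decreases along the flow, (ii) the homotopy class is preserved up to bubbling, and (iii) the finiteness of the bubble set persists, which amounts to replaying the proof of Theorem \ref{Theorem 1} with $n$ replaced by $p$ throughout.
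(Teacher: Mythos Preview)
Your overall architecture---regularize the minimizing sequence by a gradient flow, use Killing-field conservation laws to show the weak limit is $p$-harmonic, then invoke $\varepsilon$-regularity---matches the paper's. But the specific flow you propose has a genuine gap. You want to ``replay the proof of Theorem \ref{Theorem 1} with $n$ replaced by $p$ throughout'' and obtain finitely many bubble points. This fails for $p<n$: the finiteness of the singular set in Theorem \ref{Theorem 1} rests on the scale-invariance of $\int_{B_R}|\nabla u|^n$ in dimension $n$, which is precisely what makes Lemma \ref{boundness} fire under smallness of the \emph{unscaled} energy. For $p<n$ the correct smallness quantity is $R^{p-n}\int_{B_R}|\nabla u|^p$, and the covering argument then yields only an $(n-p)$-dimensional singular set, not a finite one. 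Your downstream steps---preservation of the homotopy class across the flow, and passage to the limit in the Noether identity via ``locally smooth convergence off the finite bubble set''---therefore collapse.

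The paper sidesteps this with a different perturbation: it adds a \emph{supercritical} term $\tfrac{\varepsilon}{n+1}|\nabla u|^{n+1}$ to $E_p$ (see (\ref{p-pert})) and runs the ordinary, unrectified gradient flow (\ref{p-AF}), which is globally smooth for each fixed $\varepsilon>0$ by \cite{Hung}, \cite{FR}. Choosing $\varepsilon_i\to 0$ with $E_{p,\varepsilon_i}(u_i)\to\inf_{[u_0]}E_p$, the flow outputs a smooth minimizing sequence with $\|\partial_t u_{\varepsilon_i}(\cdot,\tilde t)\|_{L^2}\to 0$ at a good time (Lemma \ref{lemma 6.2}). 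The Killing-field identity is tested against these smooth maps, and the limit is taken not via smooth convergence off bubbles but via the compactness result of \cite{CHH} that $|\nabla u_{\varepsilon_i}|^{p-2}\nabla u_{\varepsilon_i}\rightharpoonup |\nabla u|^{p-2}\nabla u$ in $L^{p/(p-1)}$. The monotonicity needed to define $\Sigma$ and feed the $\varepsilon$-regularity is not deduced abstractly from ``stationarity of the limit'' but is derived for $u_{\varepsilon_i}$ itself by the Pohozaev-type identity of Lemma \ref{Lemma2} (multiplying (\ref{p-AF}) by $x\cdot\nabla u$) and then passed to the limit; $C^{1,\alpha}$ regularity on $M\setminus\Sigma$ is quoted from \cite{TW} rather than reproved via Lemma \ref{higher-regularity}.
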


 For proving Theorem \ref{Theorem 5}, we employ a perturbation of the $p$-energy functional   and its gradient flow in a homotopy class. This kind  perturbation of the
Drichlet functional was used by Uhlenbeck in \cite {U1} to reprove Eells-Sampson's result, and  was employed by Giaquinta, the author and Yin \cite {GHY} for proving partial
regularity of  the relaxed functional of harmonic maps and also by the author and Yin \cite {HY} for proving partial regularity of the relaxed functional of bi-harmonic maps.

The paper is organised as follows. In Section 2, we  establish some basic estimates  and global existence of  weak solutions to  the rectified $n$-flow.   In Section 3, we
prove the energy identity at a singular time and finish a proof of Theorem  \ref{Theorem 2}.
In Section 4, we prove Theorem \ref{Theorem 4}.
In Section 5, we finish a proof of Theorem \ref{Theorem 5}.

\section{Some  estimates and global existence}

In local coordinates,  the Riemannian metric $g$ on $M$  can be
represented by
\[g = g_{ij}  dx^i\otimes dx^j \] with   a positive
definitive symmetric $n\times n$ matrix $(g_{ij})$.
 The volume element $dv$ of $(M; g)$ is defined by
 \[dv= \sqrt {|g|} dx \quad \mbox{with } |g|=\mbox{det }(g_{ij}).\]
 Note that $(N,h)$ is a
$m$-dimensional compact Riemannian manifold without boundary, isometrically embedded into $\R^L$.
For a map $u:M\to N$,  the  gradient norm $|\nabla u| $  is given by
\[  |\nabla u (x)|^2 = \sum_{i,j, \alpha}g^{ij}(x) \frac {\partial u^{\alpha}}{\partial x_i}\frac {\partial u^{\alpha}}{\partial x_j},\]
where $(g^{ij})=(g_{ij})^{-1}$ is the inverse matrix of $(g_{ij})$. A $C^{1,\alpha}$-map $u$ from $M$ to $N$ is called an  $n$-harmonic map if it satisfies
\begin{equation}\label{har} \frac{1}{\sqrt{|g|}}\frac{\partial}{\partial x_{i}}\left[  |\nabla
u|^{n-2}g^{ij}\sqrt{|g|}\frac{\partial}{\partial x_{j}}u\right]
+|\nabla
u|^{n-2}A(u)(\nabla u,\nabla u)=0\quad  \mbox { in } M,\end{equation}
where $A$ is the second fundamental form of $N$.

In order to show  existence of the rectified $n$-flow (\ref{NA}),   we consider an approximate $n$-functional
\begin{equation}\label{AP}
    E_{n,\varepsilon}(u)=\int_{M}( \frac  \varepsilon 2  \abs{\nabla u}^2 +\frac 1 {n}|\nabla u|^{n} )\,dv
\end{equation}
for a  constant $\varepsilon>0$.
The Euler-Lagrange equation for the functional (\ref{AP}) is
\begin{equation}\label{AEL} \frac{1}{\sqrt{|g|}}\frac{\partial}{\partial x_{i}}\left[ (\varepsilon + |\nabla
u|^{n-2})g^{ij}\sqrt{|g|}\frac{\partial}{\partial x_{j}}u\right]
+(\varepsilon +|\nabla
u|^{n-2})A(u)(\nabla u,\nabla u)=0.\end{equation}
The rectified gradient flow for  the functional (\ref{AP}) is
 \begin{eqnarray}\label{APH}
 &&(1-a+\varepsilon+a|\nabla u|^{n-2}) \frac {\partial u}{\partial t}\\
 &=&\frac{1}{\sqrt{|g|}}\frac{\partial}{\partial x_{i}}\left[ (\varepsilon + |\nabla
u|^{n-2})g^{ij}\sqrt{|g|}\frac{\partial}{\partial x_{j}}u\right] +(\varepsilon +|\nabla
u|^{n-2})A(u)(\nabla u,\nabla u)\nonumber
    \end{eqnarray}
    with initial value $u(0)=u_0$ in $M$. Multiplying $\partial_tu$ to both sides of (\ref {APH}), we have the following energy identity:
\begin{lem}  \label{EI} Let $u(t)$ be a smooth solution to the  flow (\ref{APH}) in $M\times
 [0,T)$ with initial value $u(0)=u_0$.  Then for each $s$ with  $0<s<T$, we have
\begin{eqnarray} \label{Energy identity}
  &&\int_M\frac {\varepsilon} 2|\nabla u (s)|^2+\frac 1 n|\nabla u(s)|^{n} \,dv +
  \int_0^s \int_M  (1-a+\varepsilon+a |\nabla u|^{n-2} )\left |\frac{\partial u}{\partial t}\right |^2 dv\,dt\\&=& \int_M\frac {\varepsilon} 2 | \nabla u_0|^2+\frac 1 n|
\nabla u_0|^{n}
  \,dv.\nonumber
\end{eqnarray}
\end{lem}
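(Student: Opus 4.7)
The plan is to perform the standard Bochner-type calculation: multiply equation (\ref{APH}) by $\partial_t u$, integrate over $M$, integrate by parts on the spatial divergence term, and then integrate the resulting pointwise energy identity in time from $0$ to $s$.

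More precisely, first I would take the $\R^L$-inner product of both sides of (\ref{APH}) with $\partial_t u$. On the left-hand side this produces exactly $(1-a+\varepsilon+a|\nabla u|^{n-2})|\partial_t u|^2$. For the second term on the right-hand side, I would invoke the geometric fact that $A(u)(\nabla u,\nabla u)$ is a section of the normal bundle of $N\hookrightarrow\R^L$ at $u$, while $\partial_t u$ is tangent to $N$ (since $u(\cdot,t)\in N$); consequently the pointwise inner product $\langle A(u)(\nabla u,\nabla u),\partial_t u\rangle$ vanishes and the entire curvature term disappears.

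Next, integrating the first term on the right-hand side against $\partial_t u$ over $M$ and integrating by parts (which is legitimate because $u$ is assumed smooth and $M$ has no boundary) gives
\begin{equation*}
-\int_M (\varepsilon+|\nabla u|^{n-2})\,g^{ij}\,\partial_j u\cdot\partial_i\partial_t u\,dv.
\end{equation*}
Using $\partial_t\!\left(\tfrac{\varepsilon}{2}|\nabla u|^2+\tfrac{1}{n}|\nabla u|^n\right)=(\varepsilon+|\nabla u|^{n-2})\,g^{ij}\,\partial_j u\cdot\partial_i\partial_t u$ (a pointwise chain-rule identity), I would rewrite the integrated equation as
\begin{equation*}
\int_M (1-a+\varepsilon+a|\nabla u|^{n-2})|\partial_t u|^2\,dv
= -\frac{d}{dt}\int_M \left(\frac{\varepsilon}{2}|\nabla u|^2+\frac{1}{n}|\nabla u|^n\right)dv.
\end{equation*}
Integrating this ODE identity from $t=0$ to $t=s$ and using $u(0)=u_0$ yields precisely (\ref{Energy identity}).

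There is no real obstacle here: the argument is a direct computation once one exploits the tangent-normal orthogonality to kill the second fundamental form term. The only mildly delicate point is making sure the chain-rule identity $\partial_t(\tfrac1n|\nabla u|^n)=|\nabla u|^{n-2}g^{ij}\partial_j u\cdot\partial_i\partial_t u$ is applied without introducing spurious terms at points where $\nabla u=0$ when $n<4$; this is handled by the standard convention that $|\nabla u|^{n-2}\nabla u$ is $C^1$ wherever $n\geq 2$ in the weak sense, and by the smoothness assumption on $u$ the identity holds almost everywhere, which is sufficient for integration.
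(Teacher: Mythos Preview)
Your proposal is correct and follows exactly the approach indicated in the paper, which simply says ``Multiplying $\partial_t u$ to both sides of (\ref{APH})'' before stating the lemma. You have in fact supplied considerably more detail than the paper does, including the tangent--normal orthogonality that kills the $A(u)(\nabla u,\nabla u)$ term and the chain-rule identity for $\partial_t e_\varepsilon(u)$.
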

Moreover, we have the following local energy's inequality:
\begin{lem}  \label{Local} (local energy inequality) Let $u(t)$ be a smooth solution to the  flow (\ref{APH}) in $M\times
 [0,T]$ with initial value $u(0)=u_0$ and set $e_{\varepsilon}(u)=\frac {\varepsilon} 2 |\nabla u|^2  +\frac {1}{n} |\nabla u|^{n}$. For any $x_0$ with $B_{2R}(x_0)\subset M$
and  for any two  $s,\tau \in [0, T)$ with $s<\tau$, we have
\begin{align} \label{Local1}& \int_{B_{R}(x_0) }e_{\varepsilon}(u)(\cdot ,\tau)\,dV
+\int_{s}^{\tau }\int_M (1-a+\varepsilon+a|\nabla u|^{n-2})|\partial_t u|^2\,dv\,dt\\
&\leq \,\int_{B_{2R}(x_0)} e_{\varepsilon}(u) (\cdot ,s)\,dv
+  \frac {C(\tau-s)}{R^2}\,\int_M e_{\varepsilon}(u_0)\,dv \nonumber \end{align}
and for each  $a\in (0,1]$, there is a constant $C(a)$ depending on $a$ such that
\begin{align}  \label{Local2}& \quad\int_{B_{R}(x_0) } e_{\varepsilon}(u) (\cdot ,s)\,dv-\int_{B_{2R}(x_0)}  e_{\varepsilon}(u) (\cdot , \tau )\,dv\\
&\nonumber \leq C(a)\int_{s}^{\tau }\int_M (1-a+\varepsilon+a|\nabla u|^{n-2})|\partial_t u|^2\,dv\,dt\\ &+C(a)\left (\frac {(\tau -s)}{R^2}\,\int_M e_{\varepsilon}(u_0)\,dv\,
\int_{s}^{\tau
}\int_M (1-a+\varepsilon+|\nabla u|^{n-2})|\partial_t u|^2\,dv\,dt \right )^{1/2}\,. \nonumber\end{align}
\end{lem}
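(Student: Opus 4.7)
The plan is to derive both inequalities from a single localized identity obtained by testing the flow (2.5) against $\eta^2 \partial_t u$, where $\eta\in C_c^\infty(B_{2R}(x_0))$ is a cutoff with $\eta\equiv 1$ on $B_R(x_0)$ and $|\nabla\eta|\le C/R$. Since $u(\cdot,t)\in N$, the time derivative $\partial_t u$ lies in $T_uN$ while $A(u)(\nabla u,\nabla u)$ is normal, so the fundamental-form term contributes nothing after testing. Integration by parts on the divergence term, together with the pointwise identity $(\varepsilon+|\nabla u|^{n-2})\nabla u\cdot\partial_t\nabla u=\partial_t e_\varepsilon(u)$, yields
\[
\frac{d}{dt}\int_M\eta^2 e_\varepsilon(u)\,dv + \int_M(1-a+\varepsilon+a|\nabla u|^{n-2})\eta^2|\partial_t u|^2\,dv = -2\int_M(\varepsilon+|\nabla u|^{n-2})\eta(\nabla\eta\cdot\nabla u)\partial_t u\,dv.
\]

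For \eqref{Local1}, I integrate in $t$ from $s$ to $\tau$ and bound the cross term on the right by Young's inequality. The algebraic comparison $(\varepsilon+|\nabla u|^{n-2})\le a^{-1}(1-a+\varepsilon+a|\nabla u|^{n-2})$, which follows from $(1-a+\varepsilon+a|\nabla u|^{n-2})-a(\varepsilon+|\nabla u|^{n-2})=(1-a)(1+\varepsilon)\ge 0$, lets me pick a Young parameter $\delta$ of order $a$ so that the $\eta^2|\partial_t u|^2$ piece of the Young bound is absorbed into the coercive term on the left. The remaining piece is pointwise dominated by $(C/R^2)\,e_\varepsilon(u)$ via $(\varepsilon+|\nabla u|^{n-2})|\nabla u|^2\le n\,e_\varepsilon(u)$ and $|\nabla\eta|^2\le C/R^2$; the global energy monotonicity of Lemma \ref{EI} integrates it in time to $C(\tau-s)R^{-2}\int_M e_\varepsilon(u_0)$. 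The cutoff properties then convert $\int_M\eta^2 e_\varepsilon(u)(\tau)$ into $\int_{B_R}e_\varepsilon(u)(\tau)$ from below and $\int_M\eta^2 e_\varepsilon(u)(s)$ into $\int_{B_{2R}}e_\varepsilon(u)(s)$ from above, while the global spacetime integrand on the left of \eqref{Local1} appears by coupling the localized estimate with the global identity $\int_s^\tau\!\int_M(1-a+\varepsilon+a|\nabla u|^{n-2})|\partial_t u|^2\,dv\,dt=\int_M e_\varepsilon(u)(s)-\int_M e_\varepsilon(u)(\tau)$ of Lemma \ref{EI}.

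For \eqref{Local2}, I rewrite the integrated identity with the signs reversed as
\[
\int_M\eta^2 e_\varepsilon(u)(s)-\int_M\eta^2 e_\varepsilon(u)(\tau)=\int_s^\tau\!\int_M(1-a+\varepsilon+a|\nabla u|^{n-2})\eta^2|\partial_t u|^2\,dv\,dt + 2\int_s^\tau\!\int_M(\varepsilon+|\nabla u|^{n-2})\eta(\nabla\eta\cdot\nabla u)\partial_t u\,dv\,dt,
\]
and estimate the cross term by Cauchy-Schwarz rather than Young, distributing $(\varepsilon+|\nabla u|^{n-2})^{1/2}$ symmetrically between the two factors. In the first factor I use the trivial bound $(\varepsilon+|\nabla u|^{n-2})\le(1-a+\varepsilon+|\nabla u|^{n-2})$ (since $1-a\ge 0$); in the second I use $(\varepsilon+|\nabla u|^{n-2})|\nabla u|^2\le n\,e_\varepsilon(u)$ together with the global energy bound and $|\nabla\eta|^2\le C/R^2$. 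This produces exactly the geometric-mean term on the right of \eqref{Local2}, while the positive $\eta^2$-weighted spacetime integral on the right of the displayed identity is dominated by $C(a)\int_s^\tau\!\int_M(1-a+\varepsilon+a|\nabla u|^{n-2})|\partial_t u|^2$. The left side is bounded below by $\int_{B_R}e_\varepsilon(u)(s)-\int_{B_{2R}}e_\varepsilon(u)(\tau)$ using $\eta\equiv 1$ on $B_R$ and $\mathrm{supp}\,\eta\subset B_{2R}$.

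The main technical point is the absorption argument in \eqref{Local1}: the coercive coefficient on the left of the flow equation is $1-a+\varepsilon+a|\nabla u|^{n-2}$, whereas the cross term naturally produces $(\varepsilon+|\nabla u|^{n-2})\eta^2|\partial_t u|^2$, and these disagree precisely by a factor of $a^{-1}$. The elementary comparison above is exactly what makes absorption succeed and simultaneously forces the constant in \eqref{Local2} to depend on $a^{-1}$, which is recorded in the notation $C(a)$; when $a\to 0$ the absorption degrades, highlighting why the rectification parameter $a>0$ plays an essential role in this clean local-energy analysis of the flow.
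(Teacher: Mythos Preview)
Your proposal is correct and follows essentially the same route as the paper: test the flow against $\varphi^2\partial_t u$ with a standard cutoff, use the orthogonality $\langle A(u)(\nabla u,\nabla u),\partial_t u\rangle=0$ and integration by parts to obtain the localized identity, then integrate in time and apply Young's inequality for \eqref{Local1} and Cauchy--Schwarz for \eqref{Local2}. The paper's own proof is extremely terse (it records the identity and says ``Young's inequality'' and ``similarly''), so your write-up supplies the details the paper omits --- in particular the algebraic comparison $(\varepsilon+|\nabla u|^{n-2})\le a^{-1}(1-a+\varepsilon+a|\nabla u|^{n-2})$ that makes the absorption work and explains the $a$-dependence of the constants, and the explicit splitting of $(\varepsilon+|\nabla u|^{n-2})^{1/2}$ between the two Cauchy--Schwarz factors that yields the square-root structure in \eqref{Local2}.
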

\begin{proof} Let $\varphi$ be a cut-off function with support in $B_{2R}(x_0)$ and $\varphi\equiv 1$ on $B_{R}(x_0)$ with $|\nabla \varphi|\leq C/R$. Then
\begin{eqnarray*}
    \frac{d}{dt}\int_M \varphi^2e_{\varepsilon}(u) dv
    &=& \int_M \varphi^2  \left <(\varepsilon +|\nabla u|^{n-2})\nabla u, \nabla \frac {\partial u}{\partial t} \right > \,dv\\
    &=&  -  \int_M \varphi^2 (1-a+\varepsilon+a|\nabla u|^{n-2}) \abs{\pfrac{u}{t}}^2 \\
    && +\int_M \varphi  (\varepsilon+|\nabla u|^{n-2})  \nabla u\# \nabla \varphi \# \pfrac{u}{t} \, dv.
\end{eqnarray*}
(\ref {Local1})  follows from integrating in $t$ over $[s,\tau ]$ and using Young's inequality. Similarly, we have (\ref{Local2}).
 \end{proof}

\begin{lem}\label{Lemma 2.3}
    Let $u:M\to N$ be a smooth solution to the  flow equation (\ref {APH}) in $M\times
 [0,T]$ . Set $e_{\varepsilon}(u)=\frac {\varepsilon} 2 |\nabla u|^2  +\frac {1}{n} |\nabla u|^{n}$.
   There is a small constant $\varepsilon_0>0$ such that if  the inequality
\[\sup_{0\leq t\leq T}\int_{B_{2R_0}(x_0)}|\nabla u|^{n} \,dv\,dt
   < \varepsilon_0\] holds for some positive $R_0$,
then we have
   \begin{eqnarray}
  &&\int_{0}^{T}\int_{B_{R_0}(x_0)} |\nabla u|^{n+2}+|\nabla^2 u|^2(\varepsilon +|\nabla u|^{n-2})\,dv\,dt\\
  &&
  \leq  C(1+TR_0^{-2})E_{\varepsilon}(u_0),\nonumber
    \end{eqnarray}
    where the constant $C$ does not depend on $\varepsilon$, $a$ and $u$.
\end{lem}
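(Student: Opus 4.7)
The plan is to bootstrap from the first-order parabolic energy bound of Lemma~\ref{Local} to a second-order spacetime integral estimate, by testing the flow against $-\varphi^2\Delta u$ and closing the resulting inequality via Sobolev--Gagliardo--Nirenberg together with the small $L^n$-energy hypothesis. Fix a cutoff $\varphi\in C_c^\infty(B_{2R_0}(x_0))$ with $\varphi\equiv 1$ on $B_{R_0}(x_0)$ and $|\nabla\varphi|\le C/R_0$, and abbreviate $g=\varepsilon+|\nabla u|^{n-2}$, $f=1-a+\varepsilon+a|\nabla u|^{n-2}$. Taking $s=0$ and $\tau=T$ in (\ref{Local1}) yields at once
\[
\sup_{0\le t\le T}\int_{B_{R_0}}e_\varepsilon(u)(\cdot,t)\,dv+\int_0^T\!\!\int_M f|u_t|^2\,dv\,dt\le C(1+TR_0^{-2})E_\varepsilon(u_0),
\]
with $C$ independent of $\varepsilon$, $a$ and $u$. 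This already matches the target right-hand side, so the task reduces to dominating $\int_0^T\!\!\int_{B_{R_0}}|\nabla u|^{n+2}+g|\nabla^2 u|^2\,dv\,dt$ by the two quantities on the left.

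The Hessian term is produced by testing (\ref{APH}) against $-\varphi^2\Delta u$ in spacetime. Expanding $\mathrm{div}(g\nabla u)=g\Delta u+\langle\nabla g,\nabla u\rangle$ and exploiting the geometric identities $u_t\in T_uN$, $A(u)(\nabla u,\nabla u)\in(T_uN)^{\perp}$ and $(\Delta u)^{\perp}=-A(u)(\nabla u,\nabla u)$, the curvature contribution $\int\varphi^2 gA(u)(\nabla u,\nabla u)\cdot\Delta u$ collapses to $-\int\varphi^2 g|A(u)(\nabla u,\nabla u)|^2\le 0$, which lands on the left with a favourable sign. A Young split of $\int\varphi^2 fu_t\cdot\Delta u$ engineered around the decomposition $f=ag+(1-a)(1+\varepsilon)$ keeps the constants uniform in $a\in(0,1]$ and $\varepsilon$; the cross term $\int\varphi^2\langle\nabla g,\nabla u\rangle\cdot\Delta u$, for which $|\nabla g|\le C|\nabla u|^{n-3}|\nabla^2 u|$, is absorbed by Cauchy--Schwarz at the cost of a multiple of $\int\varphi^2|\nabla u|^{n+2}$. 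Since $|\nabla^2 u|^2\le C|\Delta u|^2$ modulo a curvature correction of $M$, this yields the intermediate bound
\[
\int_0^T\!\!\int\varphi^2 g|\nabla^2 u|^2\,dv\,dt\le C\int_0^T\!\!\int f|u_t|^2\,dv\,dt+\delta\!\int_0^T\!\!\int\varphi^2|\nabla u|^{n+2}\,dv\,dt+CR_0^{-2}T\,E_\varepsilon(u_0)
\]
for any $\delta>0$, with $C$ depending on $\delta$ but not on $\varepsilon$ or $a$.

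The remaining supercritical term $|\nabla u|^{n+2}$ is handled by Sobolev--Gagliardo--Nirenberg applied to $w=|\nabla u|^{n/2}\varphi$: since $|\nabla w|^2\le C(|\nabla u|^{n-2}|\nabla^2 u|^2\varphi^2+|\nabla u|^n|\nabla\varphi|^2)$, the embedding $W^{1,2}_0\hookrightarrow L^{2n/(n-2)}$ (for $n\ge 3$) together with H\"older interpolation between $L^n$ and $L^{n^2/(n-2)}$ at exponent $2/n$ gives
\[
\int_{B_{R_0}}|\nabla u|^{n+2}\,dv\le C\Bigl(\int_{B_{2R_0}}|\nabla u|^n\,dv\Bigr)^{2/n}\int_{B_{2R_0}}\bigl(g|\nabla^2 u|^2\varphi^2+|\nabla u|^n|\nabla\varphi|^2\bigr)\,dv.
\]
Under the smallness assumption $\sup_t\int_{B_{2R_0}}|\nabla u|^n<\varepsilon_0$ the prefactor $\varepsilon_0^{2/n}$ can be made as small as desired, so after integration in time the $|\nabla u|^{n+2}$ contribution is absorbed into the Hessian term of the previous paragraph; combining this with the first-paragraph estimate and the cutoff bound $\int|\nabla\varphi|^2 e_\varepsilon(u)\le CR_0^{-2}T\,E_\varepsilon(u_0)$ closes the argument with the full bound $C(1+TR_0^{-2})E_\varepsilon(u_0)$.

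The main obstacle is the Young split for the term $\int\varphi^2 fu_t\cdot\Delta u$: when $a\in(0,1)$ the prefactor $f$ on $u_t$ differs structurally from the diffusion weight $g$, so the naive inequality $|fu_t\cdot\Delta u|\le\tfrac12 g|\Delta u|^2+Cf^2|u_t|^2/g$ fails because $f^2/g$ is not uniformly bounded as $\varepsilon\to 0$. One must instead write $fu_t\cdot\Delta u=agu_t\cdot\Delta u+(1-a)(1+\varepsilon)u_t\cdot\Delta u$ and treat the degenerate and uniformly parabolic pieces with distinct Young weights, using the lower bound $f\ge 1-a+\varepsilon$ to control the second piece; this is precisely the place where the variant of Moser's iteration of Hungerb\"uhler has to be rebuilt to accommodate the extra coefficient $a|\nabla u|^{n-2}\partial_t u$ while preserving uniformity of the constants in $\varepsilon$ and $a$.
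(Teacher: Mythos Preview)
Your overall architecture is the right one and matches the paper, but the specific mechanism you propose for the time-derivative term has a real gap that you do not close.

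The issue is precisely the step you flag at the end. You want to estimate
\[
\int_0^T\!\!\int \varphi^2 f\,u_t\cdot\Delta u\,dv\,dt
\]
by $\int f|u_t|^2$ and a small multiple of $\int g|\nabla^2 u|^2$, with $f=1-a+\varepsilon+a|\nabla u|^{n-2}$ and $g=\varepsilon+|\nabla u|^{n-2}$. Splitting $f=ag+(1-a)(1+\varepsilon)$ and applying Young to the second piece forces a term of the form $C(1-a)^2(1+\varepsilon)^2 g^{-1}|u_t|^2$, and the bound $f\ge 1-a+\varepsilon$ you invoke does \emph{not} dominate this: where $|\nabla u|$ is small one has $g\to 0$ as $\varepsilon\to 0$, so $(1-a)/g$ is unbounded while $f$ stays of order $1-a$. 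No choice of Young weights repairs this, because the available coercive quantities are $\int f|u_t|^2$ and $\int g|\Delta u|^2$, and on the degenerate set $\{|\nabla u|\approx 0\}$ neither carries the factor needed to absorb a bare $|u_t|\,|\Delta u|$.

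The paper avoids this obstacle by \emph{not} using Young's inequality on $\int f u_t\cdot\Delta u$ at all. Instead it integrates by parts in space,
\[
\int \phi^n f\,\partial_t u\cdot\Delta u
= -\frac{d}{dt}\int\Bigl(\tfrac{1-a+\varepsilon}{2}|\nabla u|^2+\tfrac{a}{n}|\nabla u|^n\Bigr)\phi^n
\;-\;a\!\int\nabla_k(|\nabla u|^{n-2})\,\partial_t u\cdot\nabla_k u\,\phi^n
\;+\;\text{(cutoff terms)},
\]
so the dangerous piece becomes a total time derivative that integrates to boundary data controlled by $E_\varepsilon(u_0)$. The remaining cross term carries the factor $a|\nabla u|^{n-3}|\nabla|\nabla u||$, and Young's inequality on \emph{that} gives $C\int a|\nabla u|^{n-2}|\partial_t u|^2\le C\int f|\partial_t u|^2$ plus $\tfrac{n-2}{4}\int |\nabla u|^{n-2}|\nabla|\nabla u||^2\phi^n$, which is absorbed by the good elliptic term. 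This integration by parts is the missing idea in your sketch.

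A second, smaller point: the assertion ``$|\nabla^2 u|^2\le C|\Delta u|^2$ modulo a curvature correction'' is false pointwise; the inequality runs the other way. The paper never passes through $g|\Delta u|^2$; it obtains $\int g|\nabla^2 u|^2\phi^n$ directly by integrating $\int\langle\nabla_k(g\nabla_k u),\Delta u\rangle\phi^n$ by parts twice (with the Ricci commutator), which simultaneously produces the full Hessian and the helpful extra term $(n-2)\int |\nabla u|^{n-2}|\nabla|\nabla u||^2\phi^n$ used to absorb the cross term above. Your Sobolev/absorption step for $|\nabla u|^{n+2}$ is correct and coincides with the paper's.
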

\begin{proof}
In a neighborhood of each point $x_0\in M$,  we can choose an
orthonormal frame $\{e_i\}_i^n$.  We denote by $\nabla_i$   the
first covariant derivative with respect to $e_i$ and  by $\nabla^2_{ji} u$
the second covariant derivatives of $u$ and so on.

Let $\phi$ be a cut-off function with support in $B_{2R_0}(x_0)$ such that $\phi =1$ in $B_{R_0}(x_0)$, $|\nabla \phi|\leq CR_0^{-1}$ and $|\phi|\leq 1$ in $B_{2R_0}(x_0)$.
Multiplying (\ref {APH}) by $\phi^n\Delta u$, we have
\begin{eqnarray}\label{14}
        &&\int_{B_{2R_0}(x_0)}(1-a+\varepsilon+a|\nabla u|^{n-2}) \left <\partial_t u, \Delta u \right >\phi^n \,dv\\
        &&=\int_{B_{2R_0}(x_0)}\left <\nabla_k( (\varepsilon +|\nabla u|^{n-2})\nabla_k u),  \Delta u\right >\phi^n\,dv\nonumber\\
        &&+\int_{B_{2R_0}(x_0)}\left <(\varepsilon +|\nabla u|^{n-2})A(u)(\nabla u,\nabla u),  \Delta u\right >\phi^n\,dv\nonumber
\end{eqnarray}

In order to estimate the first term of the right-hand side of (\ref{14}),  it follows from the well-known Ricci identity that
 \begin{eqnarray*}\nabla_k \nabla_l \left (  (\varepsilon +|\nabla u|^{n-2})
\nabla u\right )&=&\nabla_l\nabla_k \left (   (\varepsilon +|\nabla u|^{n-2}) \nabla
u\right )\\ &+&R_M\#\left (  ( \varepsilon +|\nabla u|^{n-2}) \nabla u\right )
 \end{eqnarray*}
 with the Riemannian curvature $R_M$.

 Then, integrations by parts twice yield that
\begin{eqnarray*}
      &&\int_{B_{2R_0}(x_0)}\left <\nabla_k( ( \varepsilon +|\nabla u|^{n-2})\nabla_k u), \Delta u\right >\phi^n\,dv\\
        &= & \int_{B_{2R_0}(x_0)}\left <\nabla_l( ( \varepsilon +|\nabla u|^{n-2})\nabla_k u),   \nabla_{k}\nabla_l u\right >\phi^n\,dv\\
        &&- \int_{B_{2R_0}(x_0)}\left <  \nabla_k(|\nabla u|^{n-2}\nabla_k u), \nabla_l u  \right >\nabla_l \phi^n\,dv\nonumber\\
        && + \int_{B_{2R_0}(x_0)}\left <  \nabla_l( ( \varepsilon +|\nabla u|^{n-2})\nabla_k u), \nabla_k u  \right >\nabla_k \phi^n\,dv\nonumber\\
         &&+ \int_{B_{2R_0}(x_0)}\left <R_M\# ( ( \varepsilon +|\nabla u|^{n-2})\nabla_k u),  \nabla_l u  \right >\phi^n\,dv\nonumber \\
        &\geq &   \frac 3 4\int_{B_{2R_0}(x_0)} ( \varepsilon +|\nabla u|^{n-2})|\nabla^2 u|^2\phi^n\,dv \nonumber
         \\
         &&+\frac {n-2}{2}\int_{B_{2R_0}(x_0)} ( \varepsilon +|\nabla u|^{n-2})|\nabla |\nabla u||^2\phi^n\,dv  \\
        &&-C\int_{B_{2R_0}(x_0)} ( \varepsilon +|\nabla u|^{n-2})|\nabla u|^{2}\phi^{n-2} (\phi^2  +|\nabla \phi |^2)\,dv\nonumber .
  \end{eqnarray*}
In order to estimate the  term of the left-hand side of (\ref{14}),  it follows from   integrating by parts and using Young's inequality  that
\begin{eqnarray*}
       &&\quad \int_{B_{2R_0}(x_0)}\left <(1-a+\varepsilon+a|\nabla u|^{n-2}) \partial_t u,\Delta u \right >\phi^n \,dv\\
       &=&-\frac d{dt}\int_{B_{2R_0}(x_0)} (\frac {1-a+\varepsilon}2 |\nabla u|^2+\frac a n|\nabla u|^{n})\phi^n\,dv\\
        &&-a\int_{B_{2R_0}(x_0)}\nabla_k(|\nabla u|^{n-2}) \partial_t u \cdot\nabla_k u  \phi^n\,dv \\
       &&-n\int_{B_{2R_0}(x_0)}(1-a+\varepsilon+a|\nabla u|^{n-2}) \partial_t u \cdot\nabla_k u \phi^{n-1} \nabla_k \phi\,dv\\
       &\leq& -\frac d{dt}\int_{B_{2R_0}(x_0)} (\frac  {1-a+\varepsilon} 2 |\nabla u|^2+\frac a n|\nabla u|^{n})\phi^n\,dv\\
       &&+ C \int_{B_{2R_0}(x_0)}(1-a+\varepsilon+a|\nabla u|^{n-2}) |\partial_t u|^2\phi^n\,dv
       \\
       &&+ \frac {n-2}{4}  \int_{B_{2R_0}(x_0)}|\nabla u|^{n-2} |\nabla(|\nabla u|)|^2 \phi^n\,dv \\
       && + C\int_{B_{2R_0}(x_0)}(1-a+\varepsilon  +a|\nabla u|^{n-2}) |\nabla u|^2|\nabla \phi|^2 \phi^{n-2}\,dv .
  \end{eqnarray*}
Combining above inequalities, we obtain
 \begin{eqnarray}\label{15}
        && \frac d{dt}\int_{B_{2R_0}(x_0)} (\frac {1-a+\varepsilon} 2 |\nabla u|^2+\frac a n|\nabla u|^{n})\phi^n\,dv\\
         &&+\frac 1 2\int_{B_{2R_0}(x_0)}  |\nabla^2 u|^2(\varepsilon+ |\nabla u|^{n-2})\phi^n \,dv\nonumber\\
        &\leq &C\int_{B_{2R_0}(x_0)}  |\nabla u|^2(\varepsilon|\nabla u|^2 + |\nabla u|^{n})\phi^n\,dv\nonumber\\&&
        +C\int_{B_{2R_0}(x_0)} (1+|\nabla u|^{n})\phi^{n-2} (\phi^2  +|\nabla \phi |^2)\,dv
          \nonumber\\
        &&  + C \int_{B_{2R_0}(x_0)}(1+a|\nabla u|^{n-2}) |\partial_t u|^2\phi^n\,dv.\nonumber
  \end{eqnarray}
By applying the H\"older and Sobolev inequalities, we have
  \begin{eqnarray*}
        &&\int_{0}^{T}\int_{B_{2R_0}(x_0)} |\nabla u|^{n+2}\phi^n\,dv\,dt\\
        &&\leq \left (\sup_{0\leq t\leq T}\int_{B_{2R_0}(x_0)} |\nabla u|^{n}\,dv\right )^{\frac 2 n}\int_{0}^{T}\left (\int_{B_{2R_0}(x_0)} |\nabla u|^{\frac
{n^2}{n-2}}\phi^{\frac {n^2}{n-2}}\,dv\right )^{\frac{n-2}n}\,dt\\
        &&\leq C \varepsilon_0^{\frac 2 n} \int_{0}^{T}  \int_{B_{2R_0}(x_0)}|\nabla ( |\nabla u|^{n/2}\phi^{n/2})|^2\,dv \,dt\\
        &&\leq C \varepsilon_0^{\frac 2 n} \int_{0}^{T}  \int_{B_{2R_0}(x_0)}(|\nabla^2  u|^{2}|\nabla u|^{n-2}\phi^{n} +\frac 1 {R_0^2}|\nabla u|^n  )\,dv \,dt.
  \end{eqnarray*}

Integrating (\ref{15}) in $t$ over $[0, T]$, choosing $\varepsilon_0$ sufficiently small and   Lemma \ref{EI}, we have
\begin{eqnarray*}
  &&\int_{0}^{T}\int_{B_{R_0}(x_0)} |\nabla u|^{n+2}+|\nabla^2 u|^2(\varepsilon +|\nabla u|^{n-2})\,dv\,dt \\
  &&\leq C \int_{B_{2R_0}(x_0)}(\frac  {1-a+\varepsilon}2 |\nabla u|^2+\frac a n|\nabla u|^{n})(x,0)\,dv\\
  && \quad + C  (1+ \frac {1}{R_0^2})\int_{0}^{T}E_{\varepsilon}(u;B_{2R_0}(x_0))\,dt\\
  &&\leq C  (1+T+\frac T{R^2_0})E_{\varepsilon}(u_0).
    \end{eqnarray*}
    This proves the claim.
 \end{proof}

For $R>0$ and $z_0=(x_0,t_0)\in M\times (0,\infty)$, we denote
\[P_{R}(z_0)=\{z=(x,t): |x-x_0|<R, t_0-R^2< t\leq t_0\}.\]

\begin{lem} \label{higher-regularity} Let $u$ be    a smooth solution  to the  flow equation (\ref{APH}) with smooth initial value $u_0$.  For any $\beta \geq 1$, there exists
a positive constant
    $\varepsilon_1$ depending on $\beta$  such that  if for some $R_0$ with $0<R_0<\min
\{\varepsilon_1, \frac {t_0^{1/2}}2\}$  the inequality
\[\sup_{t_0-4R_0^2\leq t\leq t_0}\int_{B_{2R_0}(x_0)}|\nabla u|^{n} \,dv
    <\varepsilon_1\] holds,
 then we have
   \begin{eqnarray}
  &&\int_{t_0-R_0^2}^{t_0}\int_{B_{R_0}(x_0)} |\nabla u|^{n+2+\beta }+|\nabla^2 u|^2(\varepsilon +|\nabla u|^{n-2+\beta})\,dv\,dt\\
  &&
  \leq C R_0^n+ C\int_{t_0-4R_0^2}^{t_0}\int_{B_{2R_0}(x_0)}(1+R_0^{-2}) |\nabla u|^{n+\beta }\,dv\,dt, \nonumber
    \end{eqnarray}
 where the constant $C$ does not depend on $\varepsilon$, $u$ and  $a$.

\end{lem}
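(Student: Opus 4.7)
The plan is to adapt the proof of Lemma \ref{Lemma 2.3} by introducing the extra weight $|\nabla u|^{\beta}$ into the test function. Let $\phi$ be a spatial cut-off with $\phi\equiv 1$ on $B_{R_0}(x_0)$, supported in $B_{2R_0}(x_0)$ and satisfying $|\nabla\phi|\leq C/R_0$, and let $\chi(t)$ be a temporal cut-off with $\chi\equiv 1$ on $[t_0-R_0^2,t_0]$, supported in $[t_0-4R_0^2,t_0]$ and satisfying $|\chi'|\leq C/R_0^2$. I would test the flow (\ref{APH}) against $\phi^n\chi\, |\nabla u|^{\beta}\Delta u$ and integrate over $B_{2R_0}(x_0)\times[t_0-4R_0^2,t_0]$, mirroring the derivation of (\ref{14}) but carrying the $|\nabla u|^{\beta}$ factor throughout.

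Two integrations by parts on the elliptic side, combined with the Ricci identity as in the proof of Lemma \ref{Lemma 2.3}, should produce the coercive contributions
\[ \tfrac{1}{2}\int(\varepsilon+|\nabla u|^{n-2})|\nabla u|^{\beta}|\nabla^2 u|^2\phi^n\chi\,dv\,dt + c(\beta,n)\int(\varepsilon+|\nabla u|^{n-2+\beta})\bigl|\nabla|\nabla u|\bigr|^2\phi^n\chi\,dv\,dt, \]
together with commutator errors arising from differentiating $|\nabla u|^{\beta}$, from the curvature $R_M$, from $\nabla\phi^n$, and from the extrinsic term $(\varepsilon+|\nabla u|^{n-2})A(u)(\nabla u,\nabla u)\cdot\Delta u\,|\nabla u|^{\beta}\phi^n\chi$. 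Each of these is controlled via Young's inequality by a small fraction of the coercive terms plus harmless multiples of $\int(1+R_0^{-2})|\nabla u|^{n+\beta}$ and of $R_0^n$.

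On the parabolic side, an integration by parts in space on $(1-a+\varepsilon+a|\nabla u|^{n-2})\partial_t u\cdot\Delta u\,\phi^n\chi|\nabla u|^{\beta}$, analogous to the derivation of (\ref{15}), would convert it into a time derivative of a weighted energy involving $|\nabla u|^{\beta+2}$ and $|\nabla u|^{n+\beta}$, plus cross terms of the form $\partial_t u\cdot\nabla u\cdot\nabla\phi$ that are absorbed using the global bound $\int(1+a|\nabla u|^{n-2})|\partial_t u|^2\leq CE_{\varepsilon}(u_0)$ from Lemma \ref{EI}, together with an error from $\chi'$ of size $R_0^{-2}\int|\nabla u|^{n+\beta}$. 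The final step is the Sobolev absorption of the top-order term: setting $v:=|\nabla u|^{(n+\beta)/2}\phi^{n/2}$ and applying H\"older with the slicewise embedding $W^{1,2}\hookrightarrow L^{2n/(n-2)}$ gives
\[ \int_{B_{2R_0}}|\nabla u|^{n+2+\beta}\phi^n\,dv \leq \Bigl(\int_{B_{2R_0}}|\nabla u|^n\Bigr)^{2/n}\Bigl(\int v^{\tfrac{2n}{n-2}}\Bigr)^{\tfrac{n-2}{n}} \leq C\varepsilon_1^{2/n}\int|\nabla v|^2\,dv, \]
with $|\nabla v|^2\leq C(|\nabla u|^{n-2+\beta}|\nabla^2 u|^2\phi^n+R_0^{-2}|\nabla u|^{n+\beta})$; choosing $\varepsilon_1$ small enough absorbs this into the coercive term and yields the claimed estimate, with the $CR_0^n$ contribution accounting for integration of constant lower-order terms over the parabolic cylinder $P_{2R_0}(z_0)$.

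The main obstacle will be the parabolic term, since $\Delta u\cdot\partial_t u$ is not an exact divergence: the weights $|\nabla u|^{\beta}$ and $a|\nabla u|^{n-2}$ together with the time cutoff $\chi$ spawn several intertwined error terms that must be simultaneously absorbed into the coercive $|\nabla^2 u|^2(\varepsilon+|\nabla u|^{n-2+\beta})$ term and into the dissipation of Lemma \ref{EI}. Once this bookkeeping is in place, the Moser-type Sobolev absorption at the end is essentially the same as in Lemma \ref{Lemma 2.3}.
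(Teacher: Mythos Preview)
Your overall architecture is right---weighted test function, coercivity from the elliptic side, Sobolev absorption of the top-order term---and the final absorption step is indeed identical to the one in Lemma~\ref{Lemma 2.3}. The gap is in the treatment of the parabolic cross terms.

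When you integrate by parts in space on $(1-a+\varepsilon+a|\nabla u|^{n-2})\,\partial_t u\cdot \Delta u\cdot \phi^n\chi\,|\nabla u|^\beta$, the derivative falls not only on $\phi^n$ but also on $a|\nabla u|^{n-2}$ and on the weight $|\nabla u|^\beta$. This produces terms of the form
\[
a\,|\nabla u|^{n-2+\beta}\,\bigl|\nabla|\nabla u|\bigr|\,|\partial_t u|\,\phi^n\chi,
\]
and after Young's inequality one is left with $a^2\int |\nabla u|^{n-2+\beta}|\partial_t u|^2\phi^n\chi$. Lemma~\ref{EI} only supplies $\int(1-a+\varepsilon+a|\nabla u|^{n-2})|\partial_t u|^2$, with \emph{no} extra $|\nabla u|^\beta$ weight, so it cannot absorb this term; moreover, any attempt to exploit the $(1-a)$ part of the dissipation would make the constant blow up as $a\to 1$, contradicting the required $a$-independence of $C$.

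The paper handles this by two devices you are missing. First, it tests the equation separately against $\phi^n|\nabla u|^\beta\partial_t u$ to obtain the auxiliary estimate (\ref{17}),
\[
a^2\!\int(\varepsilon+|\nabla u|^{n-2})|\nabla u|^\beta|\partial_t u|^2\phi^n \;\leq\; \beta^2\!\int(\varepsilon+|\nabla u|^{n-2})|\nabla u|^{\beta-2}\bigl|\nabla_k u\,\nabla_k|\nabla u|\bigr|^2 + \text{l.o.t.},
\]
which converts the weighted $|\partial_t u|^2$ into a Hessian-type term whose constant can be matched against the coercive side. Second, it uses the pointwise bound $a|\partial_t u|\leq C(|\nabla^2 u|+|\nabla u|^2)$ read directly from equation (\ref{APH}). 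The main test function in the paper is also $\phi^n\nabla\cdot(|\nabla u|^\beta\nabla u)$ rather than $\phi^n|\nabla u|^\beta\Delta u$; the divergence form is what makes the coercive coefficients in (\ref{22}) line up with the error coefficients coming from (\ref{18}) and (\ref{20}), so that the absorption goes through with constants independent of $a$.
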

\begin{proof}In a neighborhood of each point $x_0\in M$,  we still denote by $\nabla_i$   the
first covariant derivative with respect to $e_i$ and  by $\nabla^2_{ij} u$
the second covariant derivatives of $u$ and so on.

 Let $\phi=\phi (x,t)$ be a cut-off function with support in  $B_{R_0}(x_0)\times [t_0-4R_0^2, t_0+4R_0^2]$ such that $\phi =1$ in $B_{R_0}(x_0)\times [t_0-R_0^2, t_0]$,
$|\nabla \phi|\leq C/R_0$, $|\partial_t \phi|\leq \frac 1 {R_0^2}$ and $|\phi|\leq 1$ in  $B_{R_0}(x_0)\times [t_0-4R_0^2, t_0]$.

 Multiplying (\ref {APH}) by $\phi^n|\nabla u|^{\beta}\partial_t u$ and integrating by parts, we have
 \begin{eqnarray*}
        && \int_{P_{2R_0}(x_0, t_0)}( {1-a+\varepsilon}+a |\nabla u|^{n-2})|\nabla u|^{\beta} |\partial_t u|^2\phi^n \,dv \,dt  \\
        &=&-\int_{P_{2R_0}(x_0, t_0)}(\left <(\varepsilon +|\nabla u|^{n-2}) \nabla_k u, |\nabla u|^{\beta}\nabla_k (\partial_t u) \right >\phi^n \,dv \,dt  \\
        &&-\int_{P_{2R_0}(x_0, t_0)}(\left <(\varepsilon +|\nabla u|^{n-2}) \nabla_k u, \beta |\nabla u|^{\beta-1} \nabla_k(|\nabla u|)  \partial_t u\right >\phi^n \,dv \,dt
\\
        &&-\int_{P_{2R_0}(x_0, t_0)}\left < (\varepsilon +|\nabla u|^{n-2}) \nabla_k u, |\nabla u|^{\beta} \partial_t u\right >\nabla_k\phi^n\,dv \,dt\\
        &&+ \int_{P_{2R_0}(x_0, t_0)}\left <  (\varepsilon +|\nabla u|^{n-2})A(u)(\nabla u, \nabla u), |\nabla u|^{\beta} \partial_t u\right >\phi^n \,dv \,dt \\
        &\leq&-\int_{B_{R_0}(x_0, t_0)}(\frac {\varepsilon} {2+\beta}|\nabla u|^{2+\beta} +\frac 1 {n+\beta} |\nabla u|^{n+\beta}) \phi^n (\cdot ,t_0)\,dv\\
        &&+ \int_{P_{2R_0}(x_0, t_0)}(\frac {\varepsilon} {2+\beta}|\nabla u|^{2+\beta} +\frac 1 {n+\beta} |\nabla u|^{n+\beta}) n\phi^{n-1} \partial_t\phi\,dv\,dt\\
       && +\frac a 2 \int_{P_{2R_0}(x_0, t_0)} (\varepsilon +|\nabla u|^{n-2})|\nabla u|^{\beta}  |\partial_t u|^2\phi^n \,dv \,dt\\
         && + \frac {\beta^2}{2a}\int_{P_{2R_0}(x_0, t_0)} (\varepsilon +|\nabla u|^{n-2})  |\nabla u|^{\beta-2} |\nabla_k u  \nabla_k(|\nabla u|)|^2)\phi^n \,dv \,dt\\
        &&+C \int_{P_{2R_0}(x_0, t_0)}(\varepsilon +|\nabla u|^{n-2}) |\nabla u|^{\beta+1}\phi^{n-1}  (\phi |\nabla u| +|\nabla \phi|)|\partial_t u|\,dv\,dt .
  \end{eqnarray*}
  This implies that
 \begin{eqnarray}\label{17}
        &&\qquad  a^2 \int_{P_{2R_0}(x_0, t_0)} (\varepsilon +|\nabla u|^{n-2})|\nabla u|^{\beta} |\partial_t u|^2\phi^n  \,dv \,dt \\
        &     \leq&  \beta^2
        \int_{P_{2R_0}(x_0, t_0)}(\varepsilon +|\nabla u|^{n-2})|\nabla u|^{\beta-2} |\nabla_k u  \nabla_k(|\nabla u|)|^2 \,dv \,dt\nonumber \\
        &+&C a \int_{P_{2R_0}(x_0, t_0)}(\varepsilon +|\nabla u|^{n-2}) |\nabla u|^{\beta }\phi^{n-1}  ( \phi |\nabla u|^2+|\nabla u|\,|\nabla \phi|)|\partial_t
u|\,dv\,dt\nonumber\\
        &&+\frac {C a} {2+\beta} \int_{P_{2R_0}(x_0, t_0)}(\varepsilon +|\nabla u|^{n-2}) |\nabla u|^{\beta+2 }\phi^{n-1} |\partial_t \phi|\,dv\,dt.\nonumber
        \end{eqnarray}
     Then it follows from using using  Young's inequality and (\ref{17}) that
        \begin{eqnarray}  \label{18}
         &&\quad -a(n-2)\int_{P_{2R_0}(x_0, t_0)}\left <|\nabla u|^{n-3}\nabla_l(|\nabla u|) \partial_t u, |\nabla u|^{\beta} \nabla_l u \right > \phi^n \,dv \,dt\\
         &\leq&  (n-2)\int_{P_{2R_0}(x_0, t_0)}  |\nabla u|^{n-2+\beta}(\frac  {a^2| \partial_t u|^2}{2\beta} +\frac {\beta} {2
         } |\nabla_l(|\nabla u|)  \nabla_l u |^2|\nabla u|^{-2})\phi^n \,dv \,dt\nonumber\\
         &\leq& \beta (n-2)  \int_{P_{2R_0}(x_0, t_0)} (\varepsilon +|\nabla u|^{n-2})|\nabla u|^{\beta-2} |\nabla_l (|\nabla u|) \nabla_lu|^2\phi^n\,dv \,dt\nonumber \\
         &+&C\frac a {\beta} \int_{P_{2R_0}(x_0, t_0)}(\varepsilon +|\nabla u|^{n-2}) |\nabla u|^{\beta+1}\phi^{n-1}[  (\phi |\nabla u|+|\nabla \phi|)|\partial_t u|+ \frac 1
{\beta}|\partial_t \phi|]\,dv\,dt.\nonumber
         \end{eqnarray}

  Multiplying (\ref {APH}) by $\phi^n\nabla \cdot (|\nabla u|^{\beta}\nabla u)$ and integrating by parts, we have
 \begin{eqnarray}\label{19}
        && \int_{P_{2R_0}(x_0, t_0)}\left <\nabla_k ( (\varepsilon +|\nabla u|^{n-2})\nabla_k u), \nabla_l(|\nabla u|^{\beta}\nabla_l u)\right >\phi^n  \,dv \,dt     \\
        &&=\int_{P_{2R_0}(x_0, t_0)}\left <(1-a+\varepsilon+a|\nabla u|^{n-2}) \partial_t u, \nabla_l(|\nabla u|^{\beta} \nabla_l u)\right >\phi^n  \,dv \,dt\nonumber\\
         &&-\int_{P_{2R_0}(x_0, t_0)}\left < (\varepsilon +|\nabla u|^{n-2})A(u)(\nabla u, \nabla u), \nabla_l(|\nabla u|^{\beta}\nabla_l u)\right >\phi^n \,dv
\,dt.\nonumber\\
         &&=\int_{P_{2R_0}(x_0, t_0)}\left <(1-a+\varepsilon+a|\nabla u|^{n-2}) \partial_t u, \nabla_l(|\nabla u|^{\beta} \nabla_l u)\right >\phi^n  \,dv \,dt\nonumber\\
         &&+\int_{P_{2R_0}(x_0, t_0)}\left < \nabla_l[ (\varepsilon +|\nabla u|^{n-2}) A(u)(\nabla u, \nabla u) ], |\nabla u|^{\beta}\nabla_l u\right >\phi^n \,dv
\,dt\nonumber\\
         &&+\int_{P_{2R_0}(x_0, t_0)}\left < (\varepsilon +|\nabla u|^{n-2})A(u)(\nabla u, \nabla u), |\nabla u|^{\beta}\nabla_l u \right >\nabla_l(\phi^n) \,dv
\,dt.\nonumber
         \end{eqnarray}
         The second term of the right-hand side of (\ref{19})  is a good one, but we need to analyze the first term of the right-hand side of (\ref{19}). In order to estimate the first term of the right-hand side,
using equation (\ref{APH}), we note that
          \[a|\partial_t u|\leq C (|\nabla^2 u|+|\nabla u|^2).\]
         Then,  integrating by parts and using (\ref {18}), we  have
       \begin{eqnarray}\label{20}
         &&\int_{P_{2R_0}(x_0, t_0)}\left <(1-a+\varepsilon+a|\nabla u|^{n-2}) \partial_t u, \nabla_l(|\nabla u|^{\beta} \nabla_l u)\right >\phi^n\,dv\,dt\\
         &=&-a(n-2)\int_{P_{2R_0}(x_0, t_0)}\left <|\nabla u|^{n-3}\nabla_l(|\nabla u|) \partial_t u, |\nabla u|^{\beta} \nabla_l u \right > \phi^n \,dv \,dt\nonumber \\
         &&-\int_{P_{2R_0}(x_0, t_0)}\left < (1-a+\varepsilon+a|\nabla u|^{n-2})  \nabla_l(\partial_t u),  |\nabla u|^{\beta} \nabla_l u \right > \phi^n \,dv \,dt \nonumber\\
         && -\int_{P_{2R_0}(x_0, t_0)}\left <(1-a+\varepsilon+a|\nabla u|^{n-2}) \partial_t u,  |\nabla u|^{\beta} \nabla_l u \right >\nabla_l(\phi^n )\,dv \,dt.\nonumber\\
        &\leq& \beta (n-2)  \int_{P_{2R_0}(x_0, t_0)} (\varepsilon +|\nabla u|^{n-2})|\nabla u|^{\beta-2} |\nabla_l (|\nabla u|) \nabla_lu|^2\phi^n\,dv \,dt\nonumber \\
        &&-  \int_{B_{2R_0}(x_0, t_0)}(\frac {1-a+\varepsilon} {2+\beta} |\nabla u|^{2+\beta} +\frac a {n+\beta}|\nabla u|^{n+\beta}) \phi^n (\cdot ,t_0)\,dv\nonumber\\
        && +\int_{P_{2R_0}(x_0, t_0)}(\frac {1-a+\varepsilon} {2+\beta} |\nabla u|^{2+\beta} +\frac a {n+\beta}|\nabla u|^{n+\beta}) \phi^{n-1}\partial_t\phi
\,dv\,dt\nonumber\\
        &&-\int_{P_{2R_0}(x_0, t_0)}\left <({1-a+\varepsilon}+a|\nabla u|^{n-2}) \partial_t u,  |\nabla u|^{\beta} \nabla_l u \right >\nabla_l(\phi^n )\,dv \,dt\nonumber
        \\&&+C \int_{P_{2R_0}(x_0, t_0)}(\varepsilon +|\nabla u|^{n-2}) |\nabla u|^{\beta+1}\phi^{n-2}[\frac 1 {\beta^2} |\nabla \phi|^2 + \frac 1 {\beta^2}\phi |\partial_t
\phi|]\,dv\,dt\nonumber\\
        &&+\int_{P_{2R_0}(x_0, t_0)}(\varepsilon +|\nabla u|^{n-2}) |\nabla u|^{\beta}\left  (\frac 1 4|\nabla^2 u|^2+ C|\nabla u|^{4}\right ) \phi^n\,dv\,dt.
\nonumber\end{eqnarray}

 To  estimate the first term of the left-hand side of (\ref{18}),  integrating by parts twice and using the Ricci formula yield that
\begin{eqnarray}\label{21}
        &&\int_{P_{2R_0}(x_0, t_0)}\left <\nabla_k ( (\varepsilon+|\nabla u|^{n-2})\nabla_k u), \nabla_l(|\nabla u|^{\beta}\nabla_l u) \right >\phi^n\,dv\,dt\\
         &&= \int_{P_{2R_0}(x_0, t_0)}\left <\nabla_l ( (\varepsilon+|\nabla u|^{n-2})\nabla_k u), \nabla_k(|\nabla u|^{\beta}\nabla_l u) \right >\phi^n\,dv\,dt\nonumber\\
         &&\quad +  \int_{P_{2R_0}(x_0, t_0)}\left <R_M\# ( (\varepsilon+|\nabla u|^{n-2})\nabla_k u),  |\nabla u|^{\beta}\nabla_l u  \right >\phi^n\,dv\,dt\nonumber\\
         && \quad +  \int_{P_{2R_0}(x_0, t_0)}\left <  \nabla_l( (\varepsilon+|\nabla u|^{n-2})\nabla_k u), |\nabla u|^{\beta}\nabla_l u  \right >\nabla_k
\phi^n\,dv\,dt\nonumber\\
      && \quad -  \int_{P_{2R_0}(x_0, t_0)}\left <  \nabla_k( (\varepsilon+|\nabla u|^{n-2})\nabla_k u), |\nabla u|^{\beta}\nabla_l u  \right >\nabla_l
\phi^n\,dv\,dt.\nonumber
  \end{eqnarray}
  Moreover, we note that
\begin{eqnarray}\label{22}
        && \quad \int_{P_{2R_0}(x_0, t_0)}\left <\nabla_l ((\varepsilon+|\nabla u|^{n-2})\nabla_k u), \nabla_k(|\nabla u|^{\beta}\nabla_l u) \right >\phi^n\,dv\,dt\\
         &&= \int_{P_{2R_0}(x_0, t_0)}(\varepsilon+|\nabla u|^{n-2})|\nabla u|^{\beta}|\nabla^2 u|^2\phi^n\,dv\,dt\nonumber\\
          &&+(n-2+\beta ) \int_{P_{2R_0}(x_0, t_0)}(\varepsilon+|\nabla u|^{n-2})|\nabla u|^{\beta}|\nabla (|\nabla u|)|^2\phi^n\,dv\,dt \nonumber\\
          &&+\beta (n-2) \int_{P_{2R_0}(x_0, t_0)} (\varepsilon+|\nabla u|^{n-2})|\nabla u|^{ \beta -2}|\nabla_l (|\nabla u| ) \nabla_lu|^2  \phi^n\,dv\,dt \nonumber
  \end{eqnarray}
  and
  \begin{eqnarray} \label{23}
  &&\int_{P_{2R_0}(x_0, t_0)}\left <  \nabla_l( (\varepsilon+|\nabla u|^{n-2})\nabla_k u), |\nabla u|^{\beta}\nabla_l u  \right >\nabla_k \phi^n\,dv\,dt\\
  &=& \int_{P_{2R_0}(x_0, t_0)}\left <  \nabla_l( |\nabla u|^{n-2})\nabla_k u, |\nabla u|^{\beta}\nabla_l u  \right >\nabla_k \phi^n\,dv\,dt \nonumber\\
  && +\frac 12 \int_{P_{2R_0}(x_0, t_0)}   (\varepsilon+|\nabla u|^{n-2})|\nabla u|^{\beta}  \nabla_k |\nabla u|^2   \nabla_k \phi^n\,dv\,dt\nonumber\\
  &\leq & \frac  {n-2+\beta}2 \int_{P_{2R_0}(x_0, t_0)}(\varepsilon+|\nabla u|^{n-2})|\nabla u|^{\beta}|\nabla (|\nabla u|)|^2\phi^n\,dv\,dt\nonumber\\
    &&+ \frac C{n-2+\beta}\int_{P_{2R_0}(x_0, t_0)} (\varepsilon+|\nabla u|^{n-2})|\nabla u|^{\beta +2}  \phi^{n-2}|\nabla \phi |^2\,dv\,dt. \nonumber
    \end{eqnarray}

  Combining (\ref{19})-(\ref{22}) with (\ref{23}), we have
  \begin{eqnarray}\label{24}
       &&  \frac 12 \int_{P_{2R_0}(x_0, t_0)}(\varepsilon+|\nabla u|^{n-2})|\nabla u|^{\beta}|\nabla^2 u|^2 \phi^n\,dv\,dt
       \\
       && + \frac{ (n-2+\beta ) }2\int_{P_{2R_0}(x_0, t_0)}(\varepsilon+|\nabla u|^{n-2})|\nabla u|^{\beta}|\nabla (|\nabla u|)|^2\phi^n\,dv\,dt\nonumber
       \\
       &&+  \int_{B_{2R_0}(x_0)}(\frac 1 {2+\beta} |\nabla u|^{2+\beta} +\frac a {n+\beta}|\nabla u|^{n+\beta}) \phi^n (\cdot ,t_0)\,dv\nonumber \nonumber\\
          &\leq&   C\int_{P_{2R_0}(x_0, t_0)}(\frac 1 {2+\beta} |\nabla u|^{2+\beta} +\frac 1 {n+\beta}|\nabla u|^{n+\beta}) \phi^{n-1}|\partial_t\phi |\,dv\,dt\nonumber\\
          &+& C\frac 1 {\beta}\int_{P_{2R_0}(x_0, t_0)} (\varepsilon+|\nabla u|^{n-2})|\nabla u|^{\beta +2}  \phi^{n-2}|\nabla \phi |^2\,dv\,dt \nonumber\\
            &+&C\int_{P_{2R_0}(x_0, t_0)}(\varepsilon+|\nabla u|^{n-2}) (|\nabla u|^{2+\beta}+|\nabla u|^{4+\beta}) \phi^n\,dv\,dt,\nonumber
  \end{eqnarray}
  where  thhe third term of the right hand side of (\ref{19}),(\ref{20}) and the last term of the right-hand side of (\ref{22}) are canceled by   using equation (\ref{APH}).

On the other hand, by using the H\"older and Sobolev inequalities, we have
  \begin{eqnarray}\label {25}
        &&\quad\qquad  \int_{t_0-4R_0}^{t_0}\int_{B_{2R_0}(x_0)} |\nabla u|^{n+2+\beta}\phi^n\,dv\,dt\\
        &&\leq \int_{t_0-4R_0^2}^{t_0}\left (\int_{B_{2R_0}(x_0)} |\nabla u|^{n}\,dv\right )^{\frac 2 n}\left (\int_{B_{2R_0}(x_0)} |\nabla u|^{\frac {n
(n+\beta)}{n-2}}\phi^{\frac {n^2}{n-2}}\,dv\right )^{\frac{n-2}n}\,dt\nonumber\\
        &&\leq C \varepsilon_1^{\frac 2 n} \int_{t_0-4R_0^2}^{t_0}  \int_{B_{2R_0^2}(x_0)}|\nabla ( |\nabla u|^{ \frac {n+\beta}2} \phi^{n/2})|^2\,dv \,dt\nonumber\\
        &&\leq C \varepsilon_1^{\frac 2 n} \int_{t_0-4R_0^2}^{t_0}  \int_{B_{2R_0^2}(x_0)}((n+\beta)^2|\nabla^2  u|^{2}|\nabla u|^{n-2+\beta}\phi^{n} +|\nabla u|^{n+\beta}
|\nabla \phi|^2\phi^{n-1})\,dv \,dt.\nonumber
  \end{eqnarray}
 Choosing $\varepsilon_1$ (depending on $\beta$ here) sufficiently small yields
\begin{eqnarray*}
  && \int_{t_0-4R_0}^{t_0}\int_{B_{2R_0}(x_0)} (|\nabla u|^{n+2+\beta}+(\varepsilon+|\nabla u|^{n-2})|\nabla u|^{\beta}|\nabla^2 u|^2 ) \phi^n dv\,dt\\
    && \leq C\int_{P_{2R_0}(x_0, t_0)}(1+ |\nabla \phi|^2 +|\partial_t\phi |)|\nabla u|^{n+\beta} \,dv\,dt.
    \end{eqnarray*}
    This proves our claim. \end{proof}
Since the constant $\varepsilon_1$ depends on $\beta$ in Lemma \ref {higher-regularity},    we have to get an improved estimate to obtain the gradient estimate in the
following:

\begin{lem} \label{boundness} Let $u$ be    a smooth solution  to the  flow equation (\ref{APH}). There exists a positive constant
    $\varepsilon_0<i(M)$ such that  if for some $R_0$ with $0<R_0<\min
\{\varepsilon_0, \frac {t_0^{1/2}}2\}$  the inequality
\[\sup_{t_0-4R_0^2\leq t<t_0}\int_{B_{2R_0}(x_0)}|\nabla u|^{n} \,dv
    < \varepsilon_0\] holds,
we have
\[\sup_{P_{R_0}(x_0,t_0)} |\nabla u|^{n} \leq C  R_0^{-n},\]
where  $C$   is a constant independent of $\varepsilon$, $a$ and $R_0$.
\end{lem}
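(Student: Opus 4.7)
The goal is a pointwise bound $|\nabla u|^n \le CR_0^{-n}$, and as the remark preceding the statement emphasizes, the $\beta$-dependence of $\varepsilon_1$ in Lemma \ref{higher-regularity} prevents a direct iteration of that lemma. My strategy is: (i) apply Lemma \ref{higher-regularity} once with a fixed $\beta_0$ to produce a seed $L^{n+2+\beta_0}$-integrability for $|\nabla u|$ on $P_{R_0}(x_0,t_0)$, which uses only the single smallness condition $\varepsilon_0 \le \varepsilon_1(\beta_0)$; and (ii) run a parabolic Moser iteration whose Caccioppoli constant is polynomial in the iteration exponent, so that no further smallness is required.

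For each $\beta \ge 0$, I reproduce the integration-by-parts chain from the proof of Lemma \ref{higher-regularity} (multiply (\ref{APH}) by $\phi^n \nabla\!\cdot\!(|\nabla u|^\beta \nabla u)$), but in the final step---where the cubic term $|\nabla u|^{n+\beta+2}$ is absorbed and where the $\beta$-dependent $\varepsilon_1$ originally appeared---replace the raw $L^n$-smallness plus Sobolev estimate with H\"older interpolation against the fixed $L^{n+2+\beta_0}$-bound from step~(i). This yields a Caccioppoli inequality
\begin{align*}
&\sup_t\int_{B_R}|\nabla u|^{2+\beta}\phi^n + \int_{P_R}(\varepsilon+|\nabla u|^{n-2})|\nabla u|^\beta|\nabla^2 u|^2\phi^n\,dv\,dt \\
&\qquad \le C(1+\beta)^2\int_{P_{2R}}|\nabla u|^{n+\beta}\bigl(|\nabla\phi|^2+|\partial_t\phi|+\phi^n\bigr)\,dv\,dt,
\end{align*}
with constant polynomial in $\beta$. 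Combining with the parabolic Sobolev embedding converts this into the reverse-H\"older step
\[
\Bigl(\textstyle\int_{P_{R_{k+1}}}|\nabla u|^{(n+\beta_k)\sigma}\Bigr)^{1/\sigma} \le \frac{C(1+\beta_k)^2}{(R_k-R_{k+1})^2}\int_{P_{R_k}}|\nabla u|^{n+\beta_k}\,dv\,dt, \qquad \sigma=1+\tfrac{2}{n}.
\]
Iterating with $\beta_k=\beta_0\sigma^k$ and $R_k=R_0(\tfrac12+2^{-k-1})$, the product of Moser factors converges since $\sum_k 1/(n+\beta_k)<\infty$ and $(1+\beta_k)^{2/(n+\beta_k)}\to 1$. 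Passage to $k\to\infty$ combined with step~(i) yields $\sup_{P_{R_0/2}}|\nabla u|^n \le CR_0^{-n}$.

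The main obstacle is the Caccioppoli step: obtaining a constant with only polynomial $\beta$-dependence. The extra factor $a|\nabla u|^{n-2}\partial_t u$ on the left of (\ref{APH})---already a source of significant technical work in Lemma \ref{higher-regularity}---together with the second-fundamental-form nonlinearity $A(u)(\nabla u,\nabla u)$ and the Ricci cross-terms, must all be absorbed uniformly in $\beta$. The key point is that invoking the seed integrability from step~(i) via H\"older interpolation (rather than the $L^n$-smallness coupled to Sobolev, which was what forced $\varepsilon_1(\beta)\to 0$ in Lemma \ref{higher-regularity}) is precisely what keeps $\varepsilon_0$ independent of $\beta$ throughout the iteration.
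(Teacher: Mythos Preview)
Your overall architecture---use Lemma~\ref{higher-regularity} once with a fixed $\beta_0$ to produce a seed, then run Moser iteration with constants polynomial in $\beta$---is correct and is exactly what the paper does. The gap is in your justification of the Caccioppoli step. You claim that ``H\"older interpolation against the fixed $L^{n+2+\beta_0}$-bound'' lets you absorb the cubic term $\int_P|\nabla u|^{n+2+\beta}\phi^n$ and produce a Caccioppoli inequality whose right-hand side involves only $|\nabla u|^{n+\beta}$. But for $\beta>\beta_0$ the cubic exponent $n+2+\beta$ is \emph{larger} than the seed exponent $n+2+\beta_0$, so the seed bound cannot dominate it by interpolation; you would need a second, higher-order quantity to interpolate against, and you have not identified one. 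As written, the absorption is unexplained, and this is precisely the point where the $\beta$-dependent smallness entered in Lemma~\ref{higher-regularity}.

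The paper avoids this difficulty by \emph{not} absorbing the cubic term at all. Starting from the raw estimate (\ref{24}) (rewritten as (\ref{25.1}) for nested cylinders), the right-hand side still contains $\int_P|\nabla u|^{n+2+\beta}\phi^n$. The key change from Lemma~\ref{higher-regularity} is in the H\"older--Sobolev step: instead of pulling out the factor $(\sup_t\int_{B_{2R}}|\nabla u|^n)^{2/n}\le\varepsilon_1^{2/n}$ (which is what forced $\varepsilon_1=\varepsilon_1(\beta)$ in (\ref{25})), the paper pulls out $(\sup_t\int|\nabla u|^{\beta+2}\phi^n)^{2/n}$. This quantity is itself controlled by the left-hand side of (\ref{25.1}), so no smallness is needed. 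The resulting exponent on the left is
\[
(n+\beta)\Bigl(1+\tfrac{2}{n}\,\tfrac{\beta+2}{\beta+n}\Bigr)=\theta d-4,\qquad d=n+2+\beta,\quad \theta=1+\tfrac{2}{n},
\]
which is strictly larger than the cubic exponent $d$ precisely when $\beta>n-2$, i.e.\ $d>2n$. Thus the iteration reads $\int_{P_{k+1}}(1+|\nabla u|^{d_{k+1}})\le C\,4^{k\theta}\bigl(\int_{P_k}(1+|\nabla u|^{d_k})\bigr)^\theta$ with $d_{k+1}=\theta d_k-4$, and the cubic term is simply the top-order term carried along in the iteration rather than absorbed. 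Your seed step (a single application of Lemma~\ref{higher-regularity}, or Lemma~\ref{Lemma 2.3} followed by Lemma~\ref{higher-regularity}) is exactly what is needed to make $I_0$ finite for some $d_0>2n$, so that part of your plan is correct. Incidentally, your parabolic Sobolev gain $\sigma=1+2/n$ is slightly too optimistic given the mismatch between $\sup_t\int|\nabla u|^{2+\beta}$ and $\int_P|\nabla u|^{n-2+\beta}|\nabla^2 u|^2$ on the left of your Caccioppoli; the correct gain is the $\beta$-dependent factor above, which only tends to $1+2/n$ as $\beta\to\infty$, but this does not affect convergence of the Moser product.
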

\begin{proof} Let $\phi=\phi (x,t)$ be a cut-off function with support in  $B_{R}(x_0)\times [t_0-R_0^2, t_0+R_0^2]$ such that $\phi =1$ in $B_{R}(x_0)\times [t_0-\rho^2,
t_0+\rho^2]$, $|\nabla \phi|\leq \frac C{R-\rho}$, $|\partial_t \phi|\leq \frac 1 {(R-\rho)^2}$ and $|\phi|\leq 1$ in  $B_{R}(x_0)\times [t_0-R^2, t_0+R^2]$.
 For this new cut-off function $\phi$, the same proof of (\ref{24}) gives
\begin{eqnarray}\label{25.1}
       &&  \frac 12 \int_{P_{R}(x_0, t_0)}(\varepsilon+|\nabla u|^{n-2})|\nabla u|^{\beta}|\nabla^2 u|^2 \phi^n\,dv\,dt
       \\
       && + \frac{ (n-2+\beta ) }2\int_{P_{R}(x_0, t_0)}(\varepsilon+|\nabla u|^{n-2})|\nabla u|^{\beta}|\nabla (|\nabla u|)|^2\phi^n\,dv\,dt\nonumber
       \\
       &&+\sup_{t_0-R^2\leq s\leq t_0}  \int_{B_{R}(x_0)}(\frac 1 {2+\beta} |\nabla u|^{2+\beta} +\frac a {n+\beta}|\nabla u|^{n+\beta}) \phi^n (\cdot ,s)\,dv\nonumber
\nonumber\\
          &\leq&   C\int_{P_{R}(x_0, t_0)}(\frac 1 {2+\beta} |\nabla u|^{2+\beta} +\frac 1 {n+\beta}|\nabla u|^{n+\beta}) \phi^{n-1}|\partial_t\phi |\,dv\,dt\nonumber\\
          &+& C\frac 1 {\beta}\int_{P_{R}(x_0, t_0)} (\varepsilon+|\nabla u|^{n-2})|\nabla u|^{\beta +2}  \phi^{n-2}|\nabla \phi |^2\,dv\,dt \nonumber\\
            &+&C\int_{P_{R}(x_0, t_0)}(\varepsilon+|\nabla u|^{n-2}) (|\nabla u|^{2+\beta}+|\nabla u|^{4+\beta}) \phi^n\,dv\,dt,\nonumber
  \end{eqnarray}
  Using H\"older's and Sobolev's inequalities with (\ref{25.1}), we have
  \begin{eqnarray*}
        &&\quad \int_{t_0-\rho^2}^{t_0}\int_{B_{\rho}(x_0)} |\nabla u|^{(n+\beta) (1+\frac 2n \frac {\b+2}{\b +n} )}\,dv\,dt\\
        &&\leq \int_{t_0-\rho^2}^{t_0}\left ( \int_{B_{R}(x_0)} |\nabla u|^{\b +2}\phi^n\,dv\right )^{\frac 2 n}\left (\int_{B_{R}(x_0)} |\nabla u|^{\frac {n
(n+\beta)}{n-2}}\phi^{\frac {n^2}{n-2}}\,dv\right )^{\frac{n-2}n}\,dt\\
        &&\leq C  \sup_{t_0-R^2\leq t\leq t_0}\left (\int_{B_{R}(x_0)} |\nabla u|^{\b +2}\phi^n\,dv\right )^{\frac 2 n}\int_{t_0-R^2}^{t_0}  \int_{B_{R}(x_0)}|\nabla ( |\nabla
u|^{ \frac {n+\beta}2} \phi^{n/2})|^2\,dv \,dt\\
        &&\leq C\left (\int_{P_{R}(x_0, t_0)}( |\nabla u|^{2+\beta} +|\nabla u|^{n+\beta}) \phi^{n-2}(|\partial_t\phi |+|\nabla \phi |^2)\,dv\,dt\right .\nonumber\\
            && \quad +\left .\b \int_{P_{R}(x_0, t_0)}(\varepsilon+|\nabla u|^{n-2}) (|\nabla u|^{2+\beta}+|\nabla u|^{4+\beta}) \phi^n\,dv\,dt\nonumber\right )^{1+2/n}
  \end{eqnarray*}

  Next, we follow \cite {Hung} to process a Moser's iteration (e.g. see \cite {GT}).

  Set $R=R_{k}=R_0(1+2^{-k})$, $\rho =R_{k+1}= R_0(1+2^{-1-k})$, $\b=\b_k=\theta^k(d_0-2n)+n-2$ and $\theta =1+2/n$ with $d_0>2n$.
  \[d_k=n+\beta_k+2=\theta^k(d_0-2n)+2n,\quad d_{k+1}=(n+\b_k )\left (1+\frac 2 n \frac{\b_k+2}{\b_k+n}\right )=\theta d_k-4.\]
Then
\begin{eqnarray*}
        &&\quad \int_{P_{k+1}}(1+|\nabla u|^{d_{k+1}}) \,dv\,dt\leq C4^{k\theta} \left (\int_{P_k}(1+ |\nabla u|^{d_{k}})\,dv\,dt\right )^{\theta}.
  \end{eqnarray*}
  Set
  \[I_k= \left (\int_{P_k} (1+|\nabla u|^{d_{k}})\,dv\,dt\right )^{\frac 1{\theta^k}}. \]
Applying an iteration, we have

\[I_{k+1}\leq C^{\frac 1{\theta^{k+1}}}4^{\frac {k}{\theta^{k}}}I_k\leq C^{\sum_{k=1}^{\infty}\frac 1{\theta^{k+1}}}4^{\sum_{k=1}^{\infty}\frac {k}{\theta^{k}}}I_0\leq \tilde
CI_0.
 \]
Therefore, noting $d_k=\theta^k(d_0-2n)+2n$ for all $k\geq 1$, we have
\begin{eqnarray*}
        &&\left (\int_{P_{R_0}} |\nabla u|^{\theta^{k+1}(d_0-2n)}\,dv\,dt \right )^{\frac 1 {\theta^{k+1}(d_0-2n)}}\\
        &&\leq \left (C\int_{P_{k+1}} (1+|\nabla u|^{d_{k+1}}) \,dv\,dt\right )^{\frac 1 {\theta^{k+1}(d_0-2n)}}\\
        &\leq &C^{\frac 1 {\theta^{k+1}(d_0-2n)}}  (\tilde CI_0)^{\frac 1 {(d_0-2n)}} \leq C(u_0, R_0).
        \end{eqnarray*}
This implies that
$|\nabla u|$ is bounded in $P_{R_0}$.
\end{proof}

\begin{lem}
    Let $u:M\to N$ be a smooth solution to the flow equation (\ref {APH}).
   There is a small constant $\varepsilon_0>0$ such that if  the inequality
\[\sup_{t_0-T'\leq t<t_0}\int_{B_{2R_0}(x_0)}|\nabla u|^{n} \,dv\,dt
    < \varepsilon_0\] holds for some positive $R_0$,
then $\|u\|_{C^{0,\alpha}}(P_{R_0}((x_0,t_0)))$ is uniformly bounded in $\varepsilon$.
\end{lem}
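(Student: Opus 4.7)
The plan is to combine the pointwise gradient bound supplied by Lemma~\ref{boundness} (which gives spatial Lipschitz control) with the time regularity from the energy identity of Lemma~\ref{EI}, via a Struwe-type averaging argument, to deduce a parabolic Hölder estimate uniform in $\varepsilon$.

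First, by Lemma~\ref{boundness}, under the smallness hypothesis we have $\sup_{P_{R_0}(x_0,t_0)}|\nabla u|\leq CR_0^{-1}$ with $C$ independent of $\varepsilon$, which immediately gives the spatial Lipschitz estimate $|u(x,t)-u(y,t)|\leq CR_0^{-1}|x-y|$. To handle the time oscillation, fix $(x,t_1),(x,t_2)\in P_{R_0/2}$ and an auxiliary scale $0<r\leq R_0/4$, and set $\bar u_r(t):=|B_r|^{-1}\int_{B_r(x)}u(y,t)\,dy$. Splitting
\[
|u(x,t_2)-u(x,t_1)|\leq |u(x,t_2)-\bar u_r(t_2)|+|\bar u_r(t_2)-\bar u_r(t_1)|+|\bar u_r(t_1)-u(x,t_1)|,
\]
the first and third terms are bounded by $Cr/R_0$ via Poincaré and the Lipschitz estimate. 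For the middle term, I use
\[
\bar u_r(t_2)-\bar u_r(t_1) = |B_r|^{-1}\int_{t_1}^{t_2}\int_{B_r(x)}\partial_s u\,dy\,ds.
\]
For $a\in(0,1)$, the weight $w:=1-a+\varepsilon+a|\nabla u|^{n-2}$ satisfies $w\geq 1-a$, so the energy identity yields $\int_0^\infty\!\int_M|\partial_s u|^2\,dv\,ds\leq E_0/(1-a)$, and two applications of Cauchy--Schwarz give $|\bar u_r(t_2)-\bar u_r(t_1)|\leq C(a)\,r^{-n/2}(t_2-t_1)^{1/2}$. Optimising at $r\simeq R_0^{2/(n+2)}(t_2-t_1)^{1/(n+2)}$ produces a parabolic Hölder estimate with exponent $\alpha=2/(n+2)$ and constant depending on $u_0$, $R_0$, $a$ but independent of $\varepsilon$.

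The main obstacle is the endpoint case $a=1$, for which $w=\varepsilon+|\nabla u|^{n-2}$ degenerates where $|\nabla u|$ vanishes and the energy identity no longer provides a uniform $L^2$ bound on $\partial_s u$. Here I would work directly with the tested equation obtained by multiplying (\ref{APH}) by a cutoff $\eta\geq 0$ supported in $B_{2r}(x)$:
\[
\int\eta\,w\,\partial_s u\,dy = -\int(\varepsilon+|\nabla u|^{n-2})\nabla u\cdot\nabla\eta\,dy + \int\eta(\varepsilon+|\nabla u|^{n-2})A(u)(\nabla u,\nabla u)\,dy,
\]
whose right-hand side is bounded by $C(\|\nabla\eta\|_{L^1}+\|\eta\|_{L^1})$ using $\|\nabla u\|_\infty\leq C/R_0$. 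Splitting $B_r$ into the region $\{|\nabla u|>\delta\}$ (where $w\geq\delta^{n-2}$, so one can divide) and its complement (where $u$ itself oscillates by at most $C\delta r$), and optimising $\delta$, recovers the Hölder estimate uniform in $\varepsilon$ at the cost of a worse dependence on $a$.
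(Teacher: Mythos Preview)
For $a\in(0,1)$ your argument is correct and close in spirit to the paper's. The paper packages the same two ingredients---the gradient bound from Lemma~\ref{boundness} and an $L^2$ control on $\partial_t u$ coming from the (local) energy inequality---into a parabolic Campanato estimate
\[
\int_{P_R(z_0)}|u-u_{z_0,R}|^2\,dv\,dt\leq C\Bigl[R^2\!\int_{P_R}|\nabla u|^2+R^4\!\int_{P_R}|\partial_t u|^2\Bigr]\leq CR^{n+4},
\]
whereas you use a direct Struwe-type three-term splitting with an auxiliary radius and optimise. Both routes are standard and give the same conclusion; the paper's localised bound $\int_{P_R}w|\partial_t u|^2\leq CR^n$ is a bit sharper than your appeal to the global energy identity, but this makes no difference here.

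Your treatment of the endpoint $a=1$, however, has a genuine gap. From the tested equation you control $\int\eta\,w\,\partial_s u$, but the quantity needed for $\bar u_r(t_2)-\bar u_r(t_1)$ is $\int_{B_r}\partial_s u$ \emph{without} the weight. The proposed splitting of $B_r$ into $\{|\nabla u|>\delta\}$ and its complement does not bridge this: on the first set you cannot ``divide by $w$'' inside the integral, since the bound you have is on the $w$-weighted integral over all of $B_r$, not on its restriction to a sublevel set; on the second set the condition $|\nabla u|\leq\delta$ gives only \emph{spatial} oscillation control of order $C\delta r$, which says nothing about the time increment you are trying to estimate. (Note also that the set $\{|\nabla u|>\delta\}$ is itself time-dependent.) You are right that the degeneracy at $a=1$ is a real issue---indeed the paper's own proof uses $\int_{P_R}|\partial_t u|^2$ in the Poincar\'e step, which follows from the weighted local energy bound only when $1-a+\varepsilon+a|\nabla u|^{n-2}$ has an $\varepsilon$-independent lower bound, i.e.\ when $a<1$---but the fix you sketch does not close the argument.
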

\begin{proof} Using the above Lemma \ref{boundness} , $|\nabla u|$ is bounded by a constant $C$.
By a similar proof of the local energy inequality, we have
\begin{eqnarray*}
 &&\int_{P_R(z_0)} (\varepsilon +|\nabla u|^{n-2})|\frac {\partial u}{\partial t} |^2\,dv\,dt \\
  &&\leq  C \sup_{t_0-R^2\leq t\leq t_0}E_{\varepsilon}(u(t); B_{2R}(x_0))  \leq CR^n.
    \end{eqnarray*}
Set $u_{z_0,R}=\int_{P_{R}(z_0)}u(x,t)\,dz$.
By a variant of the  Sobolev-Poincare inequality, we have
\begin{eqnarray*}
 \int_{P_{R}(z_0)} |u-u_{z_0,R}|^2\,dv\,dt&\leq & C\left [R^{2}\int_{P_{R}(z_0)}|\nabla u|^{2}\,dv\,dt+R^{4}\int_{P_{R }(z_0)}|\partial_t u|^{2}\,dv\,dt  \right ] \\
&\leq & CR^{n+4}
 \end{eqnarray*}
for all $R\leq R_0/2$.
This implies that $u(x,t)$ is H\"older continuous near $(x_0,t_0)$.

\end{proof}

\begin{thm}\label{Theorem 6} For any $u_0\in W^{1,n}(M, N)$, there
exists a local  solution $u: M\times [0,T_0]\to N$ of the flow equation (\ref{NA}) with initial value $u_0$ for a constant $T_0$ satisfying
 \begin{eqnarray} \label {second}
  &&\int_{0}^{T_0}\int_{M} (|\nabla u|^{n+2}+|\nabla^2 u|^2 |\nabla u|^{n-2})\,dv\,dt\\
  &&
  \leq C E_{n}(u_0)+ C(1+T_0R_0^{-2})E_{n}(u_0).\nonumber
    \end{eqnarray}
 \end{thm}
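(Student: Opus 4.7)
The plan is to obtain $u$ as a limit, as $\varepsilon\downarrow 0$, of smooth solutions to the nondegenerate regularized flow (\ref{APH}), exploiting the $\varepsilon$-uniform estimates already established in Lemmas \ref{EI}--\ref{boundness}. First I would approximate the initial datum by a sequence $u_0^{\varepsilon}\in C^{\infty}(M,N)$ with $u_0^{\varepsilon}\to u_0$ in $W^{1,n}(M,N)$ and $E_{n,\varepsilon}(u_0^{\varepsilon})\le E_n(u_0)+o(1)$. Because the coefficient of $\partial_t u$ in (\ref{APH}) is bounded below by the positive constant $\varepsilon$ and the leading elliptic operator is perturbed by the nondegenerate viscosity term $\varepsilon\Delta u$, the system is strictly parabolic, so standard quasilinear parabolic theory (Amann, Lady\v zenskaja--Ural'ceva) produces a smooth local solution $u_{\varepsilon}$. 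Using the global energy identity (\ref{Energy identity}) together with the $\varepsilon$-independent estimates below, this solution can be continued up to a common time $T_0>0$.

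Next I would fix $R_0>0$ so small that, by absolute continuity of the integral and the hypothesis $u_0\in W^{1,n}$,
\[
\sup_{x_0\in M}\int_{B_{2R_0}(x_0)}|\nabla u_0|^n\,dv<\tfrac{1}{2}\varepsilon_0,
\]
where $\varepsilon_0$ is the constant from Lemma \ref{Lemma 2.3}. The local energy inequality (\ref{Local1}) then gives, for all $x_0\in M$ and $t\in[0,T_0]$,
\[
\int_{B_{R_0}(x_0)}e_{\varepsilon}(u_{\varepsilon})(\cdot,t)\,dv\le\int_{B_{2R_0}(x_0)}e_{\varepsilon}(u_0^{\varepsilon})\,dv+\frac{Ct}{R_0^2}\,E_{n,\varepsilon}(u_0^{\varepsilon}),
\]
so for $T_0$ of order $R_0^2\varepsilon_0/E_n(u_0)$ the smallness
$\sup_{0\le t\le T_0}\int_{B_{2R_0}(x_0)}|\nabla u_{\varepsilon}|^n\,dv<\varepsilon_0$
persists uniformly in $\varepsilon$ and $x_0$. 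Invoking Lemma \ref{Lemma 2.3} on each such ball and summing over a finite cover of $M$ then delivers the estimate (\ref{second}) for $u_{\varepsilon}$ with a constant independent of $\varepsilon$ and $a$.

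The final step is the passage $\varepsilon\to 0$. The uniform control of $\int\!\int(|\nabla u_{\varepsilon}|^{n+2}+|\nabla^2u_{\varepsilon}|^2|\nabla u_{\varepsilon}|^{n-2})$ from Lemma \ref{Lemma 2.3}, combined with the bound on $\int\!\int(1-a+a|\nabla u_{\varepsilon}|^{n-2})|\partial_tu_{\varepsilon}|^2$ coming from the energy identity of Lemma \ref{EI}, yields an Aubin--Lions compactness statement: up to a subsequence, $u_{\varepsilon}\to u$ strongly in $L^q(M\times[0,T_0])$ for every finite $q$, $\nabla u_{\varepsilon}\to\nabla u$ strongly in $L^p_{t,x}$ for every $p<n+2$, and $\partial_tu_{\varepsilon}\rightharpoonup\partial_tu$ weakly in the natural weighted space. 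This is enough to pass to the limit in the weak formulation of (\ref{APH}), identify $u$ as a weak solution of (\ref{NA}) with $u(\cdot,0)=u_0$, and obtain the estimate (\ref{second}) for $u$ by lower semicontinuity.

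The main obstacle I expect is twofold. First, the $n$-Laplacian $\mathrm{div}(|\nabla u|^{n-2}\nabla u)$ is degenerate, so identifying $\lim_{\varepsilon}(\varepsilon+|\nabla u_{\varepsilon}|^{n-2})\nabla u_{\varepsilon}$ with $|\nabla u|^{n-2}\nabla u$ requires a monotonicity (Minty) argument for the $n$-Laplacian, which becomes available only once strong $L^n$ convergence of $\nabla u_{\varepsilon}$ is secured from the uniform $L^{n+2}$ bound plus a.e.\ convergence. Second, when $a=1$ the weighted coefficient $(1-a+a|\nabla u|^{n-2})$ in front of $\partial_tu$ degenerates on the set $\{\nabla u=0\}$, so the weak formulation must be written in the variational form in which this weight is paired against the test function; this is precisely the role of the $L^{n+2}_{t,x}$ control on $\nabla u$ in (\ref{second}), which makes the nonlinear terms $|\nabla u|^{n-2}\partial_tu$ and $|\nabla u|^{n-2}A(u)(\nabla u,\nabla u)$ integrable and stable under passage to the limit independently of $a\in(0,1]$.
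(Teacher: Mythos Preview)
Your overall strategy---regularize via (\ref{APH}) with smooth approximating data, propagate local $n$-energy smallness by (\ref{Local1}) to obtain a uniform existence time $T_0$, invoke Lemma~\ref{Lemma 2.3} on a finite cover to get (\ref{second}) for $u_\varepsilon$ with constants independent of $\varepsilon$, and then pass to the limit $\varepsilon\to 0$---is exactly the route the paper follows. Your discussion of the limit step (Aubin--Lions compactness, strong $L^p$ convergence of $\nabla u_\varepsilon$ for $p<n+2$, Minty-type identification of $|\nabla u|^{n-2}\nabla u$, integrability of the weighted term $a|\nabla u|^{n-2}\partial_t u$) is in fact more explicit than the paper's one-line ``$u_{a,\varepsilon}$ converges to a map $u$ \ldots\ using Lemmas~\ref{Lemma 2.3}--\ref{boundness}.''

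The one point where the paper does something you skip is the continuation of the \emph{smooth} regularized solution $u_{a,\varepsilon}$ up to $T_0$ for each fixed $\varepsilon>0$. You invoke ``standard quasilinear parabolic theory'' together with the $L^\infty$ gradient bound from Lemma~\ref{boundness}. The paper is more cautious: since (\ref{APH}) is a \emph{system}, an $L^\infty$ bound on $\nabla u$ alone does not feed directly into $L^p$ or Schauder theory (the coefficients $b^{\alpha\beta}_{ij,a}(\nabla u_{a,\varepsilon})$ are then only bounded measurable, and De~Giorgi--Nash--Moser is not available). The paper instead runs a continuation argument in the parameter $a\in[0,1]$: for $a_0=0$, Hungerb\"uhler already gives H\"older continuity of $\nabla u_{0,\varepsilon}$; for $a$ close to $a_0$ one freezes the principal coefficients at $a_0$, treats $(b^{\alpha\beta}_{ij,a}-b^{\alpha\beta}_{ij,a_0})\partial^2_{ij}u^\alpha_{a,\varepsilon}$ as a small perturbation, and applies parabolic $L^p$-estimates with a covering/absorption step to recover $W^{2,p}$ and hence $C^{1,\alpha}$ control of $u_{a,\varepsilon}$, uniformly in $a$. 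This closes the bootstrap and shows $u_{a,\varepsilon}$ is smooth across $T_0$ for every $a\in[0,1]$. Your sketch is correct in outline, but if you want it to stand on its own you should either supply the bridge from $\nabla u\in L^\infty$ to $\nabla u\in C^\alpha$ for the system (\ref{APH}), or adopt the paper's continuation-in-$a$ device.
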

 \begin{proof}
Since $u_0\in W^{1,n}(M, N)$ can be approximated by maps in $C^{\infty}(M, N)$, we assume that $u_0$ is smooth without loss of generality.
Let $u_{a,\varepsilon}$ be a solution of that equation (\ref{APH}) with smooth initial value $u_0$.
Note that equation (\ref{APH}) is equivalent to
\begin{eqnarray}\label{26}
\frac {\partial   u_{a,\varepsilon}^{\beta}}{\partial t}&=& \frac 1{( 1-a+\varepsilon+a|\nabla   u_{a,\varepsilon}|^{n-2})} \frac{1}{\sqrt{|g|}}\frac{\partial}{\partial
x_{i}}\left[ ( \varepsilon
+ |\nabla u_{a,\varepsilon}|^{n-2})g^{ij}\sqrt{|g|}\frac{\partial}{\partial x_{j}}  u_{a,\varepsilon}^{\beta}\right]\nonumber\\
&&+\frac {(\varepsilon  +|\nabla
  u_{a,\varepsilon}|^{n-2})A^{\beta}(u_{a,\varepsilon})(\nabla   u_{a,\varepsilon},\nabla   u_{a,\varepsilon})}{ (1-a+\varepsilon+a|\nabla   u_{a,\varepsilon}|^{n-2})
}\nonumber\\
&:=&\sum_{i,k,\a}{b^{\a\b}_{ij}}_a(\nabla  u_{a,\varepsilon} )\frac{\partial^2   u_{a,\varepsilon}^{\a}}{\partial x_{i}\partial x_j} + f(u_{a,\varepsilon},\nabla
u_{a,\varepsilon}),
    \end{eqnarray}
    where
    \[{b^{\a\b}_{ij}}_a(\nabla u_{a,\varepsilon}  )=\frac {\varepsilon +|\nabla u_{a,\varepsilon}|^{n-2}}{(1-a+\varepsilon+a|\nabla   u_{a,\varepsilon}|^{n-2})}\left
(g^{ij}\delta^{\a\b}+
    \frac {(n-2) |\nabla u_{a,\varepsilon}|^{n-4}\partial_{x_i}u_{a,\varepsilon}^{\a}\partial_{x_j}u_{a,\varepsilon}^{\b}}{\varepsilon +|\nabla u_{a,\varepsilon}|^{n-2}}
\right ).\]
For a fixed  parameter $\varepsilon$, (\ref{26}) is a parabolic system, so there is a local smooth solution $u_{a, \varepsilon}$ to
the rectified gradient flow (\ref{APH}) with smooth initial value $u_0$ in $[0, T_{a, \varepsilon})$ for a maximal existence time $T_{a, \varepsilon}$.

  For a fixed  $\varepsilon>0$, there is a constant $\tilde T>0$, depending on the bound of $u_0$ and its higher derivatives, such that $\tilde T\leq T_{a, \varepsilon}$ for
all $a\in [0,1]$.  In order to prove the local existence of (\ref{NA}), we need to show that there is a uniform constant $T_0>0$, depending only on $E_n(u_0)$, such that
$T_{a,\varepsilon}\geq T_0$ for all $\varepsilon >0$ and $a\in [0,1]$.  Since $T_{a,\varepsilon}$ is the maximal existence time of the smooth solution
$u_{\varepsilon}$ of the flow (\ref{APH}),  it follows from using the same proof of  Theorem 1 in \cite {Hung} (Section 2.5) that there is a constant $T_0>0$, depending only
on $E_n(u_0)$, $\varepsilon_0$ and $R_0$, such that for $t\leq T_0$, we have
\begin{align}  \int_{B_{R_0}(x_0) }e_{\varepsilon}(u_{a,\varepsilon})(\cdot ,t)\,dv\leq &\,\int_{B_{2R_0}(x_0)} e_{\varepsilon}(u_0) \,dv
+  \frac {Ct}{R_0^n}\,\left (\int_M e_{\varepsilon}(u_0)\,dv\right )^{1-\frac 1 n} <\varepsilon_0. \end{align}
 If $\tilde T\leq T_0$, then  it follows from  using Lemma \ref{boundness}  that $\nabla   u_{a,\varepsilon}$ is bounded in $M\times [0,\tilde T]$ by the norm $\|\nabla
u_0\|_{L^n(M)}$ and hence $ f(u_{a,\varepsilon},\nabla u_{a,\varepsilon})$ is bounded.   By the PDE theory, $\nabla  u_{a,\varepsilon} (x,t)$ is  continuous in $a\in [0,1]$
for any $t\leq\tilde T<T_{\varepsilon}$.  For any $\tilde \delta >0$, there is a $\eta>0$ such that for any two $a, a_0\in [0, 1]$ with $|a-a_0|<\eta$, we have
    \[|{b^{\a\b}_{ij}}_a(\nabla  u_{a,\varepsilon} )(x,t)-{b^{\a\b}_{ij}}_{a_0}(\nabla  u_{a_0,\varepsilon} )(x,t)|\leq \tilde\delta.\]

We assume that $ \nabla  u_{a_0,\varepsilon} (x,t)$ is H\"older continuous in $M\times [\frac {\tilde T}4,\tilde T]$, with its H\"older norm depending only on the bound of
$\nabla  u_{a_0,\varepsilon} (x,t)$.  In fact, this is known for $a_0=0$ (see \cite {Hung}).
Notting
  \begin{eqnarray}
&&\frac {\partial   u_{a,\varepsilon}^{\beta}}{\partial t} -{b^{\a\b}_{ij}}_{a_0}(\nabla  u_{a_0,\varepsilon} )\frac{\partial^2   u_{a,\varepsilon}^{\a}}{\partial
x_{i}\partial x_j} \nonumber\\
&=&\left ({b^{\a\b}_{ij}}_a(\nabla  u_{a,\varepsilon} ) -{b^{\a\b}_{ij}}_{a_0}(\nabla  u_{a_0,\varepsilon} )\right ) \frac{\partial^2   u_{a,\varepsilon}^{\a}}{\partial
x_{i}\partial x_j} + f(u_{a,\varepsilon},\nabla u_{a,\varepsilon}),\nonumber
    \end{eqnarray}
   we apply the $L^p$-estimate to obtain that
 \begin{eqnarray}
&&\int_{P_{R/2} (x, \tilde T)} |\frac {\partial   u_{a,\varepsilon} }{\partial t}|^pdvdt+\int_{P_{R/2} (x, \tilde T)} |\nabla^2   u_{a,\varepsilon} |^pdvdt \\
&\leq & C\tilde \delta \int_{P_{R} (x, \tilde T)}   |\nabla^2   u_{a,\varepsilon} |^pdvdt+ C \int_{P_R(x, \tilde T)} (|f(u_{a,\varepsilon},\nabla
u_{a,\varepsilon})|^p+|u_{a,\varepsilon}|^p)\,dv dt.\nonumber
\end{eqnarray}

 By a covering argument od $M$ and choosing $\tilde\delta$ sufficiently small with $C\tilde\delta <\frac 14$, we have
 \begin{eqnarray}
&&
 \int_{M\times [\frac 12 \tilde T, \tilde T]} |\frac {\partial   u_{a,\varepsilon} }{\partial t}|^pdxdt+\frac 12  \int_{M\times [\frac 12 \tilde T, \tilde T]} |\nabla^2
u_{a,\varepsilon} |^pdvdt\\
 &\leq &C \int_{M\times [\frac 14 \tilde T, \frac 12 \tilde T]} (|f(u_{a,\varepsilon},\nabla u_{a,\varepsilon})|^p+|u_{a,\varepsilon}|^p+ |\nabla^2   u_{a,\varepsilon}
|^p)\,dvdt\leq C(\tilde T). \nonumber
 \end{eqnarray}

By the Sobolev imbedding theorem of parabolic version, $\nabla u_{a,\varepsilon}$   is also H\"older continuous, depending on $C(\tilde T)$,  uniformly for all $a\in [0,1]$
and therefore $u_{a,\varepsilon} $ is smooth across to $\tilde T\geq T_0$ for all $a\in [0,1]$.
Therefore, for each fixed $\varepsilon>0$, there is a smooth solution of the flow (\ref{APH}) in $[0, T_0]$   satisfying
\begin{eqnarray}
  &&\int_{0}^{T_0}\int_{M} |\nabla u_{a,\varepsilon}|^{n+2}+|\nabla^2 u_{a,\varepsilon}|^2(\varepsilon +|\nabla u_{a,\varepsilon}|^{n-2})\,dv\,dt\\
  &&
  \leq C E_{n,\varepsilon}(u_0)+ C(1+T_0R_0^{-2})E_{n,\varepsilon}(u_0).\nonumber
    \end{eqnarray}
 As $\varepsilon\to 0$, $u_{a,\varepsilon}$ converges to a map $u$, which is a solution of the flow equation (\ref{NA}) satisfying (\ref {second})  using Lemmas
\ref{Lemma 2.3}-\ref{boundness}.
    \end{proof}

Using above results, we can prove Theorem \ref{Theorem 1}:

\begin{proof}[Proof of Theorem \ref{Theorem 1} ]

By Theorem \ref{Theorem 6}, there is a local  solution to the flow equation (\ref{NA}) satisfying (\ref {second}). Then, the solution can be extended to $M\times
[0,T_1)$ for a maximal time
$T_1$ such that as $t\to T_1$, there are finite   singular points $\{x^{j,1}\}_{j=1}^{l_1}$; i.e.
 there is a constant $\varepsilon_0>0$ such that  each singular point  $x^{j,1}$  is characterized  by the condition
\[ \liminf_{t \to T_1}  E_n(u(x,t); B_R(x^j)) \geq \varepsilon_0 \] for any $R\in (0, R_0]$. In fact, the finiteness of singular points comes from using a similar local energy
inequality  to Lemma \ref {Local} (see \cite{St}). Then, we continue the above procedure at the initial time $T_1$ to prove existence of a solution of the flow
(\ref{NA}) in $[T_1,T_2)$ for a second blew up time $T_2$. By induction, we complete a proof.
\end{proof}

\section{Energy identity and neck-bubble decompositions}

In this section, let $u(x,t)$ be a  solution of the  rectified $n$-flow (\ref{NA}) in $M\times [0,T_1)$ in Theorem \ref{Theorem 1}.
Consider now a sequence of $\{u(x, t_i)\}$  as $t_i \to T_1\leq \infty$. Then they have uniformly bounded energy; i.e.
$E_n(u(t_i);M) \leq E_n(u_0; M)$. As $t_i \to  T_1$,   $u(x, t_i)$
 converges  to a  map $u_{T_1}$ strongly  in $W_{loc}^{1,n+1}(M\backslash\{x^1,\cdots, x^l\} )$ with finite integer $l$. At each singularity $x^j$, there is a
$R_0>0$ such that there is no other singularity inside $B_{R_0}(x^j)$. Moreover,
 there is a constant $\varepsilon_0>0$ such that  each singular point  $x^j$ for $j=1,...,l$ is characterized  by the condition
\[ \liminf_{i \to \infty}  E_n(u_i; B_R(x^j)) \geq \varepsilon_0 \] for any $R\in (0, R_0]$.
 Then there is a $\Theta>0$ such that as $t_i\to T_1$
\begin{equation}\label{limit}   |\nabla u (x,t_i)|^{n} dv\to \Theta \delta_{x^j} + |\nabla u_{T_1}|^{n}  dv,\end{equation}
 where
$\delta_{x^j}$ denotes the Dirac mass at the singularity $x^j$.

In order to establish  the energy identity of the sequence $\{u(x, t_i)\}_{i=1}^{\infty}$, we need to get the neck-bubble decomposition. We recall the  removable singularity
theorem of $n$-harmonic maps \cite {DF} and the gap theorem: there is a constant $\varepsilon_{g}>0$ such that if $u$ is a $n$-harmonic map on $S^n$ satisfying
$\int_{S^n}|\nabla u|^n<\varepsilon_{g}$, then $u$ is a constant on $S^n$.
For completeness, we  give a detailed proof on constructing the  bubble-neck decomposition   by following the idea of Ding-Tian \cite{DT} (also \cite{Parker}).

\medskip\noindent{\bf Step 1.} To find a maximal (top) bubble at the level one  (first re-scaling).

Since $ u (x, t_i) \to u_{T_1}$ regularly in $B_{R_0}(x^j)$ away from $x^j$,
where $u_{T_1}$ is a map in $W^{1,n}(M, N)$.
Since $x^j$ is a concentration point, we find   such that as $t_i\to T_1$,
\[ \max_{x\in B_{R_0}(x^j),\, T_1-\delta\leq t\leq t_i} | \nabla u (x,t)|\to \infty,  \quad  r_{i,1}=\frac 1 {\max_{x\in B_{R_0}(x^j),\,T_1-\delta \leq t\leq t_i}   | \nabla u
(x,t)|}\to 0 \] for a small $\delta >0$.
In  the neighborhood of the singularity $x^j$,   we define the rescaled map
\[  \tilde u_i(\tilde x,\tilde t):=  u_i(x_j+ r_{i,1}\tilde x, t_i+ (r_{i,1})^2 \tilde t).\]
Then $\tilde u_i(x,t)$ satisfies
\begin{align}
  \label{HA}&  ((r_{i,1})^{n-2}(1-a+\varepsilon ) +a|\nabla \tilde u|^{n-2}) \frac {\partial \tilde u}{\partial \tilde t} \\
  &= \mbox{div}\left (  |\nabla
\tilde u|^{n-2} {\nabla} \tilde u\right )
+|\nabla
\tilde u|^{n-2}A(\tilde u)(\nabla\tilde u,\nabla \tilde u)  \nonumber\end{align}   in  $B_{R_0r^{-1}(0)}\times [-1, 0]$
and   \begin{align} &\int_{-1}^{0} \int_{B_{R_0 (r_{i,1})^{-1}}(0)}  (r_{i,1}^{n-2}(1-a+\varepsilon ) +a|\nabla \tilde u|^{n-2}) |\frac {\partial \tilde u}{\partial \tilde t}|^2\,d\tilde
v\,d\tilde t
\\&\leq \int^{t_i}_{t_i-(r_{i,1})^2} \int_{M}  ((1-a+\varepsilon )+a |\nabla u|^{n-2} )\left |\frac{\partial u}{\partial  t}\right |^2\,dv\,dt\to 0. \nonumber\end{align}
Therefore, there is a $\tilde t\in (-1,0)$ such that
\begin{equation}\label{}
\int_{B_{R_0 (r_{i,1})^{-1}}(0)}  (r_{i,1}^{n-2}(1-a+\varepsilon ) +a|\nabla \tilde u|^{n-2}) |\frac {\partial \tilde u}{\partial t}|^2(\cdot ,\tilde t) \,d\tilde v\to 0.
\end{equation}

Using Lemma \ref{Local}, it can be shown that as
$i\to\infty$,
 \begin{equation}\label{limit}   |\nabla u (x, t_i+r_{i,1}^2 \tilde t_i)|^{n} dv\to \Theta \delta_{x_j} + |\nabla u_{T_1}|^{n}  dv.\end{equation}
 For simplicity, we set
 \[ u_i(x):=u(x, t_i+r_{i,1}^2 \tilde t_i)\, \mbox { for }x\in B_{R_0}(x^j),\quad \tilde u_i(\tilde x):=u(x^j +r_{i,1}\tilde x, t_i+r_{i,1}^2 \tilde t_i).\]
Since $|\nabla \tilde u_i (\tilde x)|\leq 1$ for all $\tilde x\in B_{R_0r_{i,1}^{-1}}(0)$, $\tilde u_i$ sub-converges  to an $n$-harmonic map $\omega_{1,j}$ locally in
$C^{1,\a}(\R^n, N)$  as $i\to \infty$, and $\omega_{1,j}$ can be extended to an $n$-harmonic map on $S^n$ (see \cite {DF}) and is nontrivial due to (\ref {limit}). We call
$\omega_{1,j}$ the first bubble at the singularity $x^j$, which satisfies
\begin{equation}\label{First} E_n(\omega_{1,j}; \R^n) =\lim_{R \to \infty} \lim_{i \to \infty} E_n(\tilde u_i; B_{R}(0))=
\lim_{R \to \infty}\lim_{i \to \infty}E_n( u_i; B_{Rr_{i,1}}(x^j)). \end{equation}

\medskip\noindent{\bf Step 2.} To find out new bubbles at the second level (second re-scaling).

 Assume that for a fixed small constant $\varepsilon>0$ (to be chosen later),  there exist two positive constants $\delta_0$ and   $R_0$ with $R_0r_{i,1} <4 \delta_0$ such
that for all $i$ sufficiently large, we have
\begin{equation}\label{small} \int_{B_{2r} \setminus
B_r (x^j)}{|\nabla  u(t_i)|^n dV} \leq \varepsilon \end{equation}
 for all $r \in (\frac{Rr_{i,1}}{2},2\delta)$, and for all $R\geq R_0$ and $\delta\leq \delta_0$.

If  (\ref{small}) is   true,  it follows from (\ref{Local})  and (\ref{First})  that
 \begin{align*}\label{neckenergy}
\lim_{i \to \infty} E_n (u_i; B_{R_0}(x^j))=& E_n(u_{T_1}; B_{R_0}(x^j) )+   E_n(\omega_{1,j}; \R^n)
 \\
&+ \lim_{R \to \infty} \lim_{\delta \to 0}\lim_{i \to \infty}  E_n(u_i; B_\delta \setminus B_{Rr_{i,1}}(x^j)). \nonumber
\end{align*}
 In this case, this means that there is only single bubble $\omega_{1,j}$ around $x^j$.

 If the assumption (\ref{small}) is not true, then
 for any two constants $R$ and  $\delta$ with $Rr_{i,1}< 4\delta$, $\delta\leq\delta_0$ and $R\geq R_0$, there is a number $r_i\in  (\frac{Rr_{i,1}}{2},2\delta)$  such that
 \begin{equation}\label{Fact}
  \lim_{i \to \infty}  \int_{B_{2r_i}\backslash B_{r_i}(x^{j})}|\nabla  u_i|^n\,dV  >\varepsilon .\end{equation}
 Since there is a uniformly  energy bound
$K=nE_n(u_0;M)$, i.e. $\int_M|\nabla  u_i|^n\,dV \leq K$, and $\varepsilon$ is a fixed constant,   we remark that there is no infinitely number of above   $r_{i}\in
(0,\delta_0)$ with disjoint annuluses ${B_{2r_i}\backslash B_{r_i}(x^{j})}$ satisfying (\ref{Fact}). If $\liminf_{i\to \infty}r_{i}> 0$, it can be ruled out by choosing
$\delta_0$ sufficiently small, so we assume that $\lim_{i\to\infty}r_{i}=0$.
Similarly, if $
\limsup_{i \to \infty}\frac {r_{i}}{r_{i,1}}<\infty $, it can be rule out choosing $R_0$ sufficiently large since $\tilde u_{i}$ converges regularly to $\omega_{1,\infty}$
locally in $\R^n$.
 Therefore, we can assume that $\lim_{i \to \infty}\frac {r_{i}}{r_{i,1}}=\infty $ up to a subsequence. Since there might be many different numbers $r_{i}\in
(\frac{Rr_{i,1}}{2},2\delta)$ satisfying (\ref{Fact}), we must classify these numbers.
  For any two numbers  $r_{i}$ and $\tilde r_{i}$ in $(\frac{R r_{i,1}}{2},2\delta)$ satisfying (\ref{Fact}), they can be classified in different classes by the following
properties:
 \begin{align}\label{group1}
 \lim_{i \to \infty} \frac {r_{i}}{\tilde r_{i}}=+\infty &\quad\mbox{or } \quad \lim_{i \to \infty} \frac {r_{i}}{\tilde r_{i}}=0;
  \\ \label{group2}
  0< \liminf_{i \to \infty} \frac {r_{i}}{\tilde r_{i}}\leq \limsup_{i \to \infty} \frac {r_{i}}{\tilde r_{i}}<\infty .
  \end{align}
 We say that $\{r_{i}\}$ and $\{\tilde r_{i}\}$ are in the same class if they satisfy (\ref{group2}). Otherwise, they are in different classes if they satisfy (\ref{group1}).

 It can be seen that the number of above  different classes of $\{r_{i}\}$ satisfying (\ref{Fact}) must be finite.
Let  $\{\tilde r_{i}\}$ be any number satisfying  (\ref{Fact}) in the same class of $\{r_{i}\}$. Then there is an uniform positive integer $N_1$ such that
\begin{equation}\label{class}
\frac 1{N_1}\leq  \liminf_{i \to \infty} \frac {r_{i}}{\tilde r_{i}}\leq \limsup_{i \to \infty} \frac {r_{i}}{\tilde r_{i}}\leq N_1.
\end{equation}
 Otherwise, it will contradict  with the fact that  there is no infinitely number of above   $r_{i}\in (0,\delta_0)$ with disjoint annuluses ${B_{2r_i}\backslash
B_{r_i}(x^{j})}$ satisfying (\ref{Fact}).  Therefore, these numbers $\tilde r_{i}$ can be ruled out by letting $\delta_0$ sufficiently small and $R_0$ sufficiently large.

  We say that the class of $\{r_{i}\}$ is smaller than the class of $\{\tilde r_{i}\}$ if  $\lim_{i\to \infty} \frac {r_{i}}{\tilde r_{i}}=0 $, so we  can give an order for
such equivalent classes by $\{r_{2,i}\}\leq \{r_{3,i}\}\leq \cdots \leq \{r_{L,i}\}$ for some positive integer $L>0$ depending only on the energy bound $K$ and $\varepsilon$.
Then
we can separate the neck region $B_\delta \setminus B_{Rr_{i,1}}(x^j)$ by the following finite sum:

\begin{align*}\label{i}
 &\quad E_n(u_i; B_\delta \setminus B_{Rr_{1,i}}(x^j))\\
 &= E_n(u_i; B_\delta \setminus B_{Rr_{L,i}}(x^j))  + E_n(u_i;  B_{Rr_{L,i}}(x^j) \setminus B_{ \delta r_{L,i}}(x^j))\\
 &\quad+  E_n(u_i;  B_{ \delta r_{L,i}}(x^j) \setminus B_{Rr_{L-1,i}}(x^j))+ E_n(u_i;  B_{Rr_{L-1,i}}(x^j) \setminus B_{ \delta r_{L-1,i}}(x^j))+\cdots\\
 &\quad     +   E_n(u_i; B_{R r_{2,i}}(x^j) \setminus B_{ \delta r_{2,i}}(x^j))+   E_n(u_i;  B_{ \delta r_{2,i}}(x^j) \setminus B_{Rr_{1,i}}(x^j)).
\end{align*}

For a  sequence  $\{r_{2,i}\}$ in the smallest  class  satisfying  (\ref{Fact}) with the fact that
$\lim_{i \to \infty} \frac {r_{2,i}}{ r_{1,i}}=\infty $ and $\lim_{i\to \infty} r_{2,i}=0$,
 set
\[\tilde u_{2, i}(\tilde x) =u_{i} ( x^j+ r_{2,i} \tilde x).\]
Then we note that
 \begin{align*}
 &&\lim_{R \to \infty} \lim_{\delta \to 0} \lim_{i\to \infty}   E_n(u_i;  B_{R r_{2,i}}(x^{j}) \setminus B_{\delta r_{2,i} }(x^j))\\
 &&=\lim_{R \to \infty} \lim_{\delta \to 0} \lim_{i\to \infty}   E_n(\tilde u_{2,i};  B_{R}(0) \setminus B_{\delta}(0)).
 \end{align*}

Passing to a subsequence,  $\tilde u_{2, i}$ converges  to a   $\omega_{2}$  locally in $B_{R }(0) \setminus B_{\delta}(0)$ away from a finite concentration set of $\{\tilde
u_{2, i}\}$. As $R\to \infty$ and $\delta\to 0$, $\omega_{2}$ is an $n$-harmonic map in $\R^n$ by removing singularities.
If $\omega_{2}$ is non-trivial on $\R^n$,  then $\omega_{2}$ is a new bubble, which is different from the bubble $\omega_{1}$.
The  above bubble connection  $\omega_{2}$ might be trivial. In this case, there is at least a concentration point $p\in B_2\backslash B_1$ of $\{\tilde u_{2, i}\}$ due to
(\ref{Fact}). At each concentration point $p$ of $\tilde u_{2, i}$, we can repeat the procedure in Step 1; i.e.  at each concentration point $p$ of $\tilde u_{2, i}$ in $B_{R
}(0) \setminus B_{\delta}(0)$ , there are sequences $x_{i}^p\to p$ and $\lambda^p_{i}\to 0$ such that
\[\tilde u_{2, i}(x_{i}^p+\lambda^p_{i}x )\to  \omega_{2,p},\]
where $\omega_{2,p}$ is a $n$-harmonic map  on $\R^n$.
Note that $\tilde u_{2,p,\infty}$ is also a bubble for the sequence $\{u_{i}(x^{j}+r_{i,2}x_{i}^p+r_{i,2}\lambda_{i}^p x)\}$.

Set $x^{2,p}_{i}=x_{j}+r_{i,2}x_{i}^p$. For each $p\in B_{R }(0) \setminus B_{\delta}(0) $, we have
\[
\frac {|x^{j}-x^{2,p}_{i}|}{r_{i}^1}= \frac {r_{i,2}} {r_{i,1}}|x_{i}^p| \to \infty  \mbox { as  }i\to \infty. \]
Therefore, the bubble $\omega_{2,p}$ at $p\neq 0$ is different from the bubble $\omega_{1}$.
We continue the above procedure for possible new multiple bubbles at each blow-up point $p$ again. Since there is a uniform bound $K$ for $E_n(u_i; M)$ and  each non-trivial
bubble on $S^n$  costs at least $\varepsilon_g$ of  the energy by  the gap theorem, the above process must stop after finite steps.

Furthermore, we note
\begin{align*}
 &\lim_{R \to \infty} \lim_{\delta \to 0} \lim_{i \to \infty} E_n(u_i;  B_{ \delta r_{2,i}}(x^j) \setminus B_{Rr_{1,i}}(x^j))\\
 &=\lim_{R \to \infty} \lim_{\delta \to 0} \lim_{i \to \infty}E_n(\tilde u_{2,i};  B_{\delta}(0) \setminus B_{\frac {Rr_{1,i}}{r_{2,i}}}(0)).
 \end{align*}
 Since  $\{r_{2, i}\}$ in the smallest  class  satisfying  (\ref{Fact}) with the fact that
$\lim_{i\to \infty} \frac {r_{1,i}}{r_{2, i}}=0$ and $\lim_{i\to \infty}r_{2,i}=0$, we can see that $u_{i}$ satisfies (\ref{small}) on $B_{r_{2,i}\delta }(0) \setminus
B_{Rr_{1,i} }(0)$. Otherwise,  there is a number $r_i\in  (\frac 12 Rr_{1,i}, 2\delta r_{2,i})$ satisfying  (\ref{Fact}), $r_i$ must be belong to the class of $\{r_{1,i}\}$ or
$\{r_{2, i}\}$.  In an equivalent class,  it can be ruled out by $R$ sufficiently large or letting $\delta$ sufficiently small.

Since $\lim_{i\to \infty}\frac {r_{i,1}} {r_{i,2}}=0$ and $\omega_{1}$ is  a bubble limiting map for the sequence  $\{u_{i} ( x^1_{i}+ r^1_{i}  x)=\tilde u_{2, i}(\frac
{r^1_{i}} {r^2_{i}}x)\}$,  then $p=0$ is also a concentration point of $\tilde u_{2, i}$ on $\R^n$.
Therefore
the bubble $\omega_{2,0}$  must be the same bubble $\omega_{1}$. Since the bubble  $\omega_{1}$  is produced by $u_i$ on $B_{Rr_{1,i}}(x^j)$, we separate it  from other
bubbles without repeating.

\medskip\noindent{\bf Step 3.} To find out all  multiple bubbles.

 Let  $r_{i,3}$ be in the second small class of numbers satisfying (\ref{Fact}) with $\lim_{i \to \infty} \frac {r_{i,3}}{r_{i,2}}=\infty$ and
$\lim_{i \to \infty}  r_{i,3}=0$.
Set
\[\tilde u_{3, i}(\tilde x) =u_{i} (x^{j} +r_{i,3}\tilde x).\]
 Passing to a subsequence,  $\tilde u_{3, i}$ converges   locally to a  $\omega_{3}$   away from a finite concentration set of $\{\tilde u_{3, i}\}$ on $\R^n\backslash
\{0\}$.
Then we can repeat the argument of Steps 1-2. All bubbles produced by $\tilde u_{3, i}$, except for those concentrated in $0$,  are different from Steps 1-2.
By induction, we can find out all bubbles in all cases of the finite different classes. Since there is at least one nontrivial bubble on each different classes, the total
number $L$ of equivalent classes depends only on $K$ and $\varepsilon_g$. By  the gap theorem of $n$-harmonic maps on $S^n$,  the above process must stop after finite steps.

  In summary,  at each class level $k$, the blow-up happens,  there are  finitely many blow-up points  and  bubbles on $\R^n$.
At each level $k$ and each bubble point $p_{k,l}$, there are sequences $\tilde x_{i}^{k,l}\to p_{k,l}$ and $r_{i,k}\to 0$ with $\lim_{i\to \infty}\frac {r_{i,k}}{r_{i,
k-1}}=\infty$ such that passing to a subsequence,
$\tilde u_{i, k,l}(x)=u_{i} (x^{k,l}_{i}+r_{i,k}  x)$ converges to  $\omega_{k,l}$, where $\omega_{k,l}$  is an $n$-harmonic map in   $\R^n$, where
$x^{k,l}_{i}=x^j+r_{i,k}\tilde x^{k,l}$.

In conclusion, there are  finite   numbers $r_{i,k}$, finite points $x^{k,l}_i$, positive  constants $R_{k,l}$, $\delta_{k,l}$ and a finite number  of non-trivial $n$-harmonic
maps
$\omega_{k,l}$ on $\R^n$ such that

\begin{align}\label{neckenergy}
& \lim_{t_i\to \infty} E_n(u_i; B_{R_0}(x_k)) \\
=&E_n(u_{T_1}; B_{R_0}(x_i) )+\sum_{k=1}^L\sum_{l=1}^{J_k}
E_n(\omega_{k,l}; \R^n)\nonumber\\
+ &\sum_{k=1}^L \sum_{l=1}^{J_k} \lim_{R_{k,l} \to \infty} \lim_{\delta_{k,l} \to 0} \lim_{i \to \infty} E_n(\tilde u_{k,l,i}; B_{\delta_{k,l}} \backslash B_{R_{k,l}r_{i,
k}}(x^{k,l}_i)).\nonumber
\end{align}
Moreover, at each neck region $B_{\delta_{k,l}} \backslash B_{R_{k,l}r^{k}_\alpha}(x^{k,l}_i)$ in (\ref{neckenergy}), for all $i$ sufficiently large, we have
 \begin{equation}\label{Basic} \int_{B_{2r} \backslash
B_r (x^{k,l}_{\a})}{|\nabla \tilde u_{k,l,i}|^n dV} \leq \varepsilon \end{equation}
 for all $r \in (\frac{R_{k,l}r^k_i}{4},2\delta_{k,l})$, where   $\varepsilon$ is a  fixed constant to be chosen sufficiently small.

\begin{proof}[Proof of Theorem \ref {Theorem 2}] Wei-Wang \cite {WW} proved an energy identity of a sequence
of regular approximated n-harmonic maps $u_i$ in $W^{1,n}(M,N)\cap C^0(M,N)$, whose tension fields $h_i$
are bounded in $L^{n/n-1}(M)$. Let $u_i(x)=u(x,t_i)$ satisfy the equation (\ref{NA}). In this case,
$h_i:=(1+a|\nabla u_i|^{n-2})\partial_t u_i$, which is bounded in $L^{n/n-1}(M)$. In fact,
 using H\"older's inequality, we have
 \begin{equation*}\int_M \left (|\nabla  u_i|^{n-2} |\frac {\partial   u_i}{\partial  t}|\right )^{\frac n{n-1}} \leq
 \left ( \int_M  |\nabla  u_i|^{n}\right )^{\frac {n-2}{2(n-1)} } \left (\int  |\nabla   u_i|^{n-2} |\frac {\partial   u_i}{\partial   t}|^2\right )^{\frac n{2(n-1)}}\leq C.
 \end{equation*}
Under the condition (\ref {Basic}), we can apply Theorem B of \cite {WW} to prove
\begin{equation*}\lim_{R_{k,l} \to \infty} \lim_{\delta_{k,l} \to 0} \lim_{i \to \infty} E_n(\tilde u_{k,l,i}; B_{\delta_{k,l}} \backslash B_{R_{k,l}r_{i,
k}}(x^{k,l}_i))=0.\end{equation*}
 Therefore, the energy identity follows from (\ref{neckenergy}).
\end{proof}

\section{Minimizing the $n$-energy functional in homotopy classes}

In this section, we will present some applications of the related $n$-flow to minimizing the $n$-energy functional in a given homotopy class and give a proof of Theorem 3.
For a map $u:M\to N$, we recall the functional
\begin{equation}\label{4.1}
E_{n,\varepsilon}(u,M)=\int_Me_{n,\varepsilon}(u)\,dv,\end{equation}
where we set   $e_{n,\varepsilon}(u)=\frac {\varepsilon} 2 |\nabla u|^2  +\frac {1}{n} |\nabla u|^{n}+\frac{\varepsilon} {n+1}|\nabla u|^{n+1}$.

Let $u_i\in C^{\infty}(M,N)$ be a minimizing sequence of the $n$-energy in a homotopy class $[u_0]$. Since a minimizing sequence $u_i$ does not satisfy any equation, we cannot
have a good tool to use. Following an idea of the $\alpha$-harmonic map flow \cite {HY},
 we introduce a modified gradient flow for  the functional (\ref{4.1}) in the following:
 \begin{eqnarray}\label{34}
 &&(1-a+\varepsilon+a|\nabla u|^{n-2}+\varepsilon |\nabla u|^{n-1}) \frac {\partial u}{\partial t}\\
 &=&\frac{1}{\sqrt{|g|}}\frac{\partial}{\partial x_{i}}\left[ (\varepsilon + |\nabla
u|^{n-2}+\varepsilon |\nabla
u|^{n-1})g^{ij}\sqrt{|g|}\frac{\partial}{\partial x_{j}}u\right]\nonumber\\
&+&(\varepsilon +|\nabla
u|^{n-2}+\varepsilon |\nabla
u|^{n-1})A(u)(\nabla u,\nabla u)\nonumber
 \end{eqnarray}  with initial value $u(0)$ for a small constant $a>0$.
 Since the minimizing sequence $u_i$ is smooth, there is a   sequence $\varepsilon_i$ with  $\varepsilon_i \to 0$ such that
    \begin{equation}\label{eqn:alpha}
        \lim_{i\to \infty} E_{n,\varepsilon_i}(u_i, M)=\lim_{i\to \infty} E_n(u_{\varepsilon_i},M)=\inf_{u\in [u_0]}E_n(u,M).
    \end{equation}
Choosing  $u(0)=u_i$ to be  initial values, there is a sequence of $\varepsilon=\varepsilon_i\to 0$ such that the flow (\ref{34}) has a unique global smooth solution
$u_{\varepsilon_i}(x,t)$
on $M\times [0,\infty)$ with $u_{\varepsilon_i}(0)=u_i$.

By (\ref {34}), we have the energy identity
\begin{eqnarray} \label{47}
&& E_{n, {\varepsilon_i}} (u_{\varepsilon_i} (s),M)+
  \int_0^s \int_M  (1-a+\varepsilon+a |\nabla u_{\varepsilon_k}|^{n-2} +\varepsilon_i |\nabla
u|^{n-1})\left |\frac{\partial u_{\varepsilon_i}}{\partial t}\right |^2 dv\,dt\\
&&= E_{n, {\varepsilon_i}} (u_{i},M)\nonumber
\end{eqnarray}for each $s>0$.
  This implies that
    \begin{equation}\label{es1}
    \lim_{i\to \infty} \int_M\frac {\varepsilon_i} n| \nabla u_{\varepsilon_i}(s)|^{n+1} \,dv=0,
    \end{equation}
    \begin{equation}\label{es2}
   \lim_{i\to \infty} \int_0^s \int_M (1-a+\varepsilon +a |\nabla u_{\varepsilon_i}|^{n-2}+\varepsilon |\nabla
u_{\varepsilon_i}|^{n-1}) \abs{\partial_t u_{\varepsilon_i}}^2\, dv dt=0.
    \end{equation}
  Hence the sequence $\{u_{\varepsilon_i}(s)\}_{i=1}^{\infty}$  for each $s>0$ is also a minimizing sequence in the homotopic class $[u_0]$.

    \begin{lem}  \label{5.1}   Let $\rho$, $R$ be two constants with $\rho<R\leq 2\rho$. For any $x_0$ with $B_{2\rho}(x_0)\subset M$ and
for any two  $s,\tau \in [0, T)$, we have
\begin{align*}&\quad \int_{B_{\rho}(x_0) } e_{n,\varepsilon_i}(u_{\varepsilon_i}(\cdot ,s))\,dv-\int_{B_{R}(x_0)}  e_{n,\varepsilon_i}(u_{\varepsilon_i}) (\cdot , \tau
)\,dv\\
&\leq
C\int_{s}^{\tau }\int_M (1+a|\nabla u_{\varepsilon_i}|^{n-2}+\varepsilon_i |\nabla u|^{n-1})|\partial_t u_{\varepsilon_i}|^2\,dv\,dt\nonumber\\ &+C\left (\frac
{(\tau-s)}{(R-\rho)^2}\,\int_M e_{n,\varepsilon_i}(u_i)\,dv\, \int_{s}^{\tau
}\int_M (1+a|\nabla u_{\varepsilon_i}|^{n-2}+\varepsilon_i |\nabla u_{\varepsilon_i}|^{n-1})|\partial_t u_{\varepsilon_i}|^2\,dv\,dt \right )^{1/2}\,. \nonumber\\
\end{align*}
\end{lem}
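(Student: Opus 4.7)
The plan is to imitate the derivation of \eqref{Local2} in Lemma \ref{Local}, adjusted for the extra $\varepsilon|\nabla u|^{n-1}$ term in the coefficients of the modified flow \eqref{34}. Choose a cutoff $\varphi\in C_c^\infty(B_R(x_0))$ with $\varphi\equiv 1$ on $B_\rho(x_0)$, $0\leq\varphi\leq 1$, and $|\nabla\varphi|\leq C/(R-\rho)$. Test \eqref{34} against $\varphi^n\partial_t u_{\varepsilon_i}$; since $A(u_{\varepsilon_i})(\nabla u_{\varepsilon_i},\nabla u_{\varepsilon_i})\perp \partial_t u_{\varepsilon_i}$ in $\mathbb{R}^L$, integration by parts gives
\begin{align*}
\frac{d}{dt}\int_M\varphi^n e_{n,\varepsilon_i}(u_{\varepsilon_i})\,dv
&=-\int_M\varphi^n(1-a+\varepsilon_i+a|\nabla u_{\varepsilon_i}|^{n-2}+\varepsilon_i|\nabla u_{\varepsilon_i}|^{n-1})|\partial_t u_{\varepsilon_i}|^2\,dv\\
&\quad-n\int_M\varphi^{n-1}(\varepsilon_i+|\nabla u_{\varepsilon_i}|^{n-2}+\varepsilon_i|\nabla u_{\varepsilon_i}|^{n-1})\nabla u_{\varepsilon_i}\cdot\nabla\varphi\cdot\partial_t u_{\varepsilon_i}\,dv.
\end{align*}

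Integrating from $s$ to $\tau$ and using $\varphi\equiv 1$ on $B_\rho$, $\operatorname{supp}\varphi\subset B_R$, I obtain
\[
\int_{B_\rho(x_0)} e_{n,\varepsilon_i}(u_{\varepsilon_i}(\cdot,s))\,dv-\int_{B_R(x_0)} e_{n,\varepsilon_i}(u_{\varepsilon_i}(\cdot,\tau))\,dv
\leq \int_s^\tau\!\!\int_M\varphi^n(1-a+\varepsilon_i+a|\nabla u_{\varepsilon_i}|^{n-2}+\varepsilon_i|\nabla u_{\varepsilon_i}|^{n-1})|\partial_t u_{\varepsilon_i}|^2\,dv\,dt+|\mathrm{II}|,
\]
where $\mathrm{II}$ denotes the cross term. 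The key step is to split $\varphi^{n-1}=\varphi^{n/2}\cdot\varphi^{(n-2)/2}$ and apply Cauchy--Schwarz to obtain
\[
|\mathrm{II}|\leq C\left(\int_s^\tau\!\!\int_M\varphi^n(\varepsilon_i+|\nabla u_{\varepsilon_i}|^{n-2}+\varepsilon_i|\nabla u_{\varepsilon_i}|^{n-1})|\partial_t u_{\varepsilon_i}|^2\right)^{1/2}\!\!\left(\int_s^\tau\!\!\int_M\varphi^{n-2}(\varepsilon_i|\nabla u_{\varepsilon_i}|^2+|\nabla u_{\varepsilon_i}|^n+\varepsilon_i|\nabla u_{\varepsilon_i}|^{n+1})|\nabla\varphi|^2\right)^{1/2}.
\]

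The integrand of the second factor is bounded by $C(R-\rho)^{-2}e_{n,\varepsilon_i}(u_{\varepsilon_i})$, and by the energy identity \eqref{47} one has $\int_M e_{n,\varepsilon_i}(u_{\varepsilon_i}(t))\,dv\leq \int_M e_{n,\varepsilon_i}(u_i)\,dv$ for every $t$, so the second factor is controlled by $\bigl((\tau-s)(R-\rho)^{-2}\int_M e_{n,\varepsilon_i}(u_i)\,dv\bigr)^{1/2}$. For the first factor, since $a>0$ the pointwise bound $\varepsilon_i+|\nabla u|^{n-2}+\varepsilon_i|\nabla u|^{n-1}\leq C(a)(1+a|\nabla u|^{n-2}+\varepsilon_i|\nabla u|^{n-1})$ holds, and the same inequality replaces the coefficient $1-a+\varepsilon_i+a|\nabla u|^{n-2}+\varepsilon_i|\nabla u|^{n-1}$ in the first term above by $1+a|\nabla u|^{n-2}+\varepsilon_i|\nabla u|^{n-1}$ up to an absolute constant. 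Combining these observations yields the desired estimate.

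I do not foresee any deep obstacle here: the argument is the standard differentiate-the-energy-against-a-cutoff calculation, and the only bookkeeping that requires care is the $a$-dependence of the constant (which comes from dominating $\varepsilon_i+|\nabla u|^{n-2}+\varepsilon_i|\nabla u|^{n-1}$ by the coefficient multiplying $\partial_t u$ in \eqref{34}) together with verifying that the new $\varepsilon_i|\nabla u|^{n+1}$ piece of $e_{n,\varepsilon_i}$ is harmlessly controlled in the second Cauchy--Schwarz factor. The mild modification to Lemma \ref{Local} is thus essentially routine once the flow \eqref{34} and the augmented energy density are in place.
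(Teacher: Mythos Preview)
Your proposal is correct and follows essentially the same approach as the paper: the paper's own proof is the one-line instruction ``let $\phi$ be a cut-off function in $B_R(x_0)$ with $\phi=1$ in $B_\rho$ and $|\nabla\phi|\leq C/(R-\rho)$; the result follows from multiplying (\ref{34}) by $\phi\,\partial_t u_{\varepsilon_i}$,'' which is exactly the computation you carry out in detail (your use of $\varphi^n$ rather than $\varphi$ is immaterial), and your treatment of the cross term via Cauchy--Schwarz together with the global energy identity (\ref{47}) is precisely the mechanism behind (\ref{Local2}) in Lemma~\ref{Local}.
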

\begin{proof}Let $\phi$ be a cut-off function in $B_R(x_0)$ such that $\phi=1$ in $B_{\rho}$ and $|\nabla \phi|\leq C/(R-\rho)$.  The required result follows from multiplying
(\ref{34}) by $\phi\partial_t u_{\varepsilon_i}$.\end{proof}

We can  repeat the same steps of Lemma \ref{boundness} to obtain
    \begin{lem} \label{5.3}  There exists a positive constant
    $\varepsilon_0<i(M)$ such that  if for some $R_0$ with $0<R_0<\min
\{\varepsilon_0, \frac {t_0^{1/2}}2\}$  the inequality
\[\sup_{t_0-4R_0^2\leq t<t_0}\int_{B_{2R_0}(x_0)}|\nabla u_{\varepsilon_i}|^{n} \,dv
   < \varepsilon_0\] holds,
we have
\[\|\nabla u_{\varepsilon_i}\|_{L^{\infty}(B_{R_0}(x_0))}\leq C(R_0) \]
where  $C$   is a constant independent of $\varepsilon$ and depends on $R_0$.
\end{lem}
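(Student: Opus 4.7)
The plan is to mirror the Moser iteration from the proof of Lemma \ref{boundness}, treating the extra term $\varepsilon_i|\nabla u|^{n-1}$ appearing in both the time-derivative coefficient and the elliptic operator of (\ref{34}) as a controlled non-negative perturbation of (\ref{APH}). Since this extra term is multiplied by $\varepsilon_i$, it either parallels the structure of $|\nabla u|^{n-2}$ with a better power in $|\nabla u|$ or can be absorbed via Young's inequality into the terms already handled in Lemma \ref{higher-regularity}, with all constants remaining independent of $\varepsilon_i$ and $a$.

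First, I fix a cut-off function $\phi$ supported in $B_R(x_0)\times[t_0-R^2,t_0+R^2]$ with $\phi\equiv 1$ on $B_\rho(x_0)\times[t_0-\rho^2,t_0]$ and $|\nabla\phi|\leq C/(R-\rho)$, $|\partial_t\phi|\leq C/(R-\rho)^2$. Testing (\ref{34}) successively against $\phi^n|\nabla u_{\varepsilon_i}|^\beta\partial_tu_{\varepsilon_i}$ and $\phi^n\nabla_l(|\nabla u_{\varepsilon_i}|^\beta\nabla_lu_{\varepsilon_i})$, I carry out exactly the integration-by-parts manipulations producing (\ref{17})--(\ref{24}). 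The new $\varepsilon_i|\nabla u|^{n-1}$ contributions are collected and estimated by Young's inequality so that the direct analogue of (\ref{25.1}) is recovered with the elliptic coefficient $\varepsilon_i+|\nabla u|^{n-2}+\varepsilon_i|\nabla u|^{n-1}$ on the left and constants independent of $\varepsilon_i$.

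Second, I combine this with H\"older's and Sobolev's inequalities on $|\nabla u_{\varepsilon_i}|^{(n+\beta)/2}\phi^{n/2}$, as in (\ref{25}), to obtain a reverse inequality relating the $L^{d_{k+1}}$-norm of $|\nabla u_{\varepsilon_i}|$ on the smaller cylinder $P_{R_{k+1}}$ to its $L^{d_k}$-norm on $P_{R_k}$, with gain factor $\theta=1+2/n$ and exponents $d_k=\theta^k(d_0-2n)+2n$. Using the shrinking radii $R_k=R_0(1+2^{-k})$, together with the smallness hypothesis $\sup_t\int_{B_{2R_0}(x_0)}|\nabla u_{\varepsilon_i}|^n<\varepsilon_0$ (needed to absorb the $\varepsilon_0^{2/n}$ factor in the Sobolev step), I iterate in $k$ exactly as in Lemma \ref{boundness} to obtain $\|\nabla u_{\varepsilon_i}\|_{L^\infty(P_{R_0}(x_0,t_0))}\leq C(R_0)$, which yields the desired spatial bound by restriction.

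The main obstacle is verifying uniformity in $\varepsilon_i$: whenever the perturbation $\varepsilon_i|\nabla u|^{n-1}$ appears outside the divergence structure, in particular after integration by parts on the parabolic term $\varepsilon_i|\nabla u|^{n-1}\partial_tu$, one must check that the explicit factor $\varepsilon_i$ compensates for the higher power of $|\nabla u|$ and that no inverse power of $\varepsilon_i$ is produced. This is arranged by pairing each such occurrence with the coercive quantity $(1-a+\varepsilon_i+a|\nabla u|^{n-2}+\varepsilon_i|\nabla u|^{n-1})|\partial_tu|^2$ on the left of the energy inequality via Young's inequality, so that only lower-order contributions of the type already appearing in (\ref{17})--(\ref{23}) survive, and the iteration constants remain uniform as $\varepsilon_i\to 0$.
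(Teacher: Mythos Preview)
Your proposal is correct and follows exactly the approach the paper indicates: the paper's own proof is the single sentence ``We can repeat the same steps of Lemma \ref{boundness} to obtain'' Lemma \ref{5.3}, and your write-up spells out precisely that repetition, together with the natural observation that the additional $\varepsilon_i|\nabla u|^{n-1}$ term is a non-negative perturbation absorbable by Young's inequality with constants uniform in $\varepsilon_i$.
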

Now we complete a proof of Theorem \ref {Theorem 4}.
\begin{proof}[Proof of Theorem \ref {Theorem 4}]
For a minimizing sequence $u_i$ of the $n$-energy in the homotopy class, let $u$ be the weak limit of $\{u_{i}\}_{i=1}^{\infty}$ in $W^{1,n}(M)$.
Set
 $$\Sigma_0 =\bigcap_{R>0} \left\{x_0\in \Omega : B_R(x_0)\subset  M,\quad \liminf_{i\to \infty}
  \int_{B_R(x_0)} |\nabla u_{i} |^n \,dx\geq \varepsilon_0\right
 \}
 $$
 for a small constant $\varepsilon_0>0$. It is known that $\Sigma_0$ is  a set of finite points.
 For the above sequence $\{u_{\varepsilon_i}(s)\}_{i=1}^{\infty}$, we set
 $$\Sigma_s =\bigcap_{R>0} \left\{x_0\in \Omega : B_R(x_0)\subset  M,\quad \liminf_{i\to \infty}
  \int_{B_R(x_0)} |\nabla u_{\varepsilon_i}(\cdot ,s) |^n \,dx\geq \varepsilon_0\right
 \},
 $$
 which is also finite.   Applying (\ref{es1})-(\ref{es2}) to Lemma \ref {5.1}, we obtain that $\Sigma_0=\Sigma_s$  for all $s>0$ (see a similar argument to one in \cite
{HTY}).
 By using Lemmas \ref{5.1}-\ref{5.3},   $|\nabla u_{\varepsilon_i}(x,s)|\leq C(R)$ on $P_R(x_0,s)$ for each $x_0\in M\backslash \Sigma$ with $B_R(x_0)\subset M$.
By this result, we know that $u(x,t)$ is a weak solution to the flow (\ref{34}).  Since $u_{i}(x,t)$
converges
    weakly to $u(x,t)$ in $W^{1,2}(M\times [0,1])$, $u(\cdot, t)\equiv u(\cdot,0)=u$. Then $u(x,t)$
      is an $n$-harmonic map from $M$ to $N$ independent of $t\in [0,1]$. By the regularity result on $n$-harmonic maps,
    $u$ is a smooth map on $M$.

For any $x_0\in M\backslash \Sigma$, there is a constant $R>0$ such that  $B_{R}(x_0)\subset M\backslash \Sigma$.
 Note that $u_{\varepsilon_i}(\tau)$ converges strongly to $u$ in  $W^{1,n}(B_{R}(x_0))$.
As $i\to\infty$, we apply Lemma 5.1 to obtain that
\begin{align*} & \int_{B_{\rho}(x_0)}\frac 1 n |\nabla u|^n \leq \liminf_{i\to\infty}\int_{B_{\rho}(x_0) }  \frac 1 n |\nabla u_i|^n \,dv\leq
\limsup_{i\to\infty}\int_{B_{\rho}(x_0) } e_{\varepsilon_i}(u_i)\,dv\\
&\leq \limsup_{i\to\infty}   \int_{B_{R}(x_0)}  e_{n,\varepsilon_i}(u_{\varepsilon_i}) (\cdot , \tau )\,dv=
 \int_{B_{R}(x_0)} \frac 1 n |\nabla u|^n  \,dv\end{align*}
for any $R$ with $\rho<R$. Letting $R\to \rho$, we have
\[\int_{B_{\rho}(x_0)}\frac 1 n |\nabla u|^n = \lim_{i\to\infty}\int_{B_{\rho}(x_0) } \frac 1 n |\nabla u_i|^n \,dv.\]
This implies that $u_i$ converges strongly to $u$ in $W^{1,n}(B_{\rho}(x_0))$ and hence strongly in  $W^{1,n}_{loc}(M\backslash\Sigma)$.

Next, we use  a similar proof of Sacks-Uhlenbeck \cite{SU} to show that $\Sigma_0=\Sigma_s=\emptyset $ if $\pi_n(N)=0$. Let $\{u_{\varepsilon_i}(s)\}_{i=1}^{\infty}$ be the
above sequence.
it is known that $u_{\varepsilon_i}(s)$ converges to $u$ strongly in $W^{1, n+1}_{loc}(M\backslash \Sigma_s)$.
Without loss of generality, we assume that there is one singularity $x^1$ in $\Sigma_s$.
    Let $\eta (r)$ be a smooth cutoff function in $\Bbb R$ with the property that $\eta\equiv 1$ for $r\geq 1$ and $\eta\equiv
    0$ for $r\leq 1/2$. For some $\rho>0$, we define a new sequence of maps
     ${v}_i: M\to N$ such that ${v}_i$ is the same as $u_i$ outside $B_\rho(x_1)$, and for  $x\in
     B_\rho(x_1)$,
    \begin{equation*}
        {v}_i(x)=\exp_{u(x)} \left( \eta(\frac{\abs{x}}{\rho})\exp^{-1}_{u(x)}\circ u_{\varepsilon_i}(x,s) \right),
    \end{equation*}
    where $\exp$ is the exponential map on $N$.
Note that ${v}_i\equiv u$ on $B_{\rho/2}(x_1)$
    and ${v}_i\equiv u_{\varepsilon_i}(s)$
    outside $B_\rho(x_1)$ and that   $u_{\varepsilon_i}(s)$ converges to $u$ on $B_{\rho}(x_1)\setminus B_{\rho/2}(x_1)$
    strongly in $W^{1,n+1}$ and thus in $C^\beta$ for some $\beta>0$. Hence for sufficiently large $i$, $v_i(B_{\rho(x_1)}\setminus B_{\rho/2}(x_1))$ lies in a small
neighborhood of $u(x_1)$,
     where $\exp^{-1}_{u(x)}$ is a well defined smooth map (if $\rho$ is small). Since $F(y)=\exp_{u(x)}\left( \eta(\frac{\abs{x}}{\rho})\exp^{-1}_{u(x)} y \right)$ is
     a smooth map from a neighborhood of $u(x_1)$ into itself, we have
    \begin{eqnarray*}
        \int_{B_\rho\setminus B_{\rho/2} (x_1)}|\nabla (v_i-u)|^n\,dv   &=&
         \int_{B_\rho\setminus B_{\rho/2} (x_1)} |\nabla (F\circ u_{\varepsilon_i}(s)- F\circ u)|^n  \,dv \\
        &\leq& C  \int_{B_\rho\setminus B_{\rho/2} (x_1)}  |\nabla (u_{\varepsilon_i}(s)-u)|^n  \,dv \to 0
    \end{eqnarray*}  as $i\to\infty$.
   It implies that
    \begin{equation}\label{eqn:modify}
        \norm{{v}_i-u}_{W^{1,n}(M)}\to 0
    \end{equation}
    as $i\to \infty$.

    Since $\pi_n(N)$ is trivial, ${v}_i$ is in the same homotopy class as $u_{\varepsilon_i}(s)$. Since $u_{\varepsilon_i}(s)$ is a minimizing sequence of $E_{n,
\varepsilon_i}$ and
    $u_{\varepsilon_i}(s)$ converges weakly to $u$ in $W^{1,n}$,  we have
    \begin{align*}
    \label{eqn:weak}
       &E_n(u)\leq \limsup_{i\to \infty}E_{n, \varepsilon_i}(u_{\varepsilon_i}(s))
       \leq \limsup_{i\to \infty}E_{n, \varepsilon_i}(v_{i})=E_n(u),\end{align*}
   which implies that $u_{\varepsilon_i}(s)$ converges to $u$ strongly in $W^{1,n}(M, N)$, which means that there is no energy concentration; i.e. $\Sigma_0=\Sigma_s=\emptyset
$.

    \end{proof}

 \section{Minimizing the $p$-energy functional in homotopy classes}

For a small $\varepsilon >0$,  we introduce a perturbation  of  the $p$-energy functional by
\begin{equation} \label{p-pert}
    E_{p,\varepsilon}(u; M)=\int_{M} \frac {\varepsilon} 2 \abs{\nabla u}^2 +\frac 1p \abs{\nabla u}^p +\frac  \varepsilon {n+1}|\nabla u|^{n+1} dv.
\end{equation}
The  Euler-Lagrange equation associated to this functional is
 \begin{align}
 \label{p-eq}
  &\nabla \cdot ([\varepsilon +|\nabla u|^{p-2} +\varepsilon|\nabla
u|^{n-1}]\nabla u) \\
&+[\varepsilon +|\nabla u|^{p-2} +\varepsilon|\nabla
u|^{n-1}]A(u)(\nabla u,\nabla u)=0.\nonumber
    \end{align}
The  gradient flow for  the above equation  is
 \begin{eqnarray}\label{p-AF}
\frac {\partial u}{\partial t}&=&\mbox{dvi} \left[ (\varepsilon +|\nabla u|^{p-2} + \varepsilon |\nabla
u|^{n-1})\nabla u\right]\\
&&+(\varepsilon + |\nabla u|^{p-2} +\varepsilon|\nabla
u|^{n-1})A(u)(\nabla u,\nabla u)\nonumber
    \end{eqnarray}
    with initial value $u(0)=u_0$ in $M$. If the initial map $u_0$ is smooth, there is a global smooth solution to (\ref{p-AF}) by using  proofs in \cite {Hung} and \cite
{FR}.

    Without loss of generality, we assume $g_{ij}=\delta_{ij}$. Then we have

\begin{lem} \label{Lemma2}  Let $u$ be a solution of the equation (\ref {p-AF}). Then for all $\rho \leq R$ with $B_R(x_0)\subset
\Omega$, we have
\begin{align*} &\rho^{p-n}\int_{B_{
\rho}(x_0)} \left [ \frac {\varepsilon} 2 |\nabla u|^2+\frac 1 p |\nabla u |^p +\frac 1 {n+1}\varepsilon |\nabla
u|^{n+1}\right ]\,dx\\
& +\frac {n(p-2)}{2p}\int_{\rho}^R r^{p-1-n}\int_{B_{r}} {\varepsilon}  |\nabla u|^2\,dx\,dr \\
&+\int_{B_R\backslash B_{\rho}(x_0)}\left [\frac 1 2
|\partial_r u|^2+ \frac 1 {n+1}\varepsilon |\nabla u|^{n-1} |\partial_r
u|^2\right ] r^{p-n} \,dx\\
= \,& R^{p-n}\int_{B_R(x_0)} \left [ \frac {\varepsilon} 2 |\nabla u|^2+\frac 1 p|\nabla u
|^p +\frac 1 {n+1}\varepsilon |\nabla u|^{n+1}\right ]\,dx\\
& +\frac {n+1-p} {n+1}\int_{\rho}^R \int_{B_r(x_0)}r^{p-1-n}
\varepsilon |\nabla u|^{n+1}\,dx  \,dr\\
&+\int_{\rho}^R \int_{B_{r}}r^{p-1-n}\left <\frac{\partial u}{\partial t},\,x_i  \nabla_i
u\right >\,dx\,dr.  \end{align*}
\end{lem}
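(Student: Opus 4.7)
The plan is to derive a Pohozaev-type monotonicity identity by pairing equation (\ref{p-AF}) with the radial multiplier $x_i \nabla_i u$, a technique that is standard for $p$-harmonic flows and originates in work of Hardt--Lin (elliptic) and Struwe. The key structural feature that makes it work is that $A(u)(\nabla u,\nabla u)$ is normal to $T_uN$ while $x_i\nabla_i u$ takes values in $T_uN$; hence the curvature term vanishes under the pairing, and only the divergence piece of the right-hand side of (\ref{p-AF}) contributes.

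I would fix $x_0$ (taken as the origin for convenience, justified by $g_{ij}=\delta_{ij}$) and abbreviate $a(s)=\varepsilon+s^{p-2}+\varepsilon s^{n-1}$, so that (\ref{p-AF}) reads $\partial_t u = \nabla_j(a(|\nabla u|)\nabla_j u)+a(|\nabla u|)A(u)(\nabla u,\nabla u)$. Taking the inner product with $x_i\nabla_i u$ and integrating over $B_r$, one integration by parts yields a boundary term $r\int_{\partial B_r} a(|\nabla u|)|\partial_r u|^2\,d\sigma$ (using $x_i\nu_i=r$ and $x_i\nabla_i u = r\partial_r u$ on $\partial B_r$) and bulk terms $\int_{B_r}a|\nabla u|^2\,dx + \tfrac12\int_{B_r}a\,x\cdot\nabla|\nabla u|^2\,dx$ coming from $\nabla_j(x_i\nabla_i u)=\nabla_j u+x_i\nabla_i\nabla_j u$.

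The crucial identification is the chain rule $\tfrac{a(|\nabla u|)}{2}\nabla|\nabla u|^2 = \nabla e_{p,\varepsilon}(u)$, since each of $\tfrac{\varepsilon}{2}\nabla|\nabla u|^2$, $\tfrac12|\nabla u|^{p-2}\nabla|\nabla u|^2$, and $\tfrac{\varepsilon}{2}|\nabla u|^{n-1}\nabla|\nabla u|^2$ is precisely $\nabla$ of one of the three summands in $e_{p,\varepsilon}$. A further integration by parts on $\int_{B_r}x\cdot\nabla e_{p,\varepsilon}$ produces $r\int_{\partial B_r}e_{p,\varepsilon}\,d\sigma - n\int_{B_r}e_{p,\varepsilon}\,dx$. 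Setting $I(r)=\int_{B_r}e_{p,\varepsilon}\,dx$ and collecting everything gives the pointwise-in-$r$ identity
\[
rI'(r) = r\!\int_{\partial B_r}\!a|\partial_r u|^2\,d\sigma + nI(r) - \!\int_{B_r}\!a|\nabla u|^2\,dx - \!\int_{B_r}\!\langle\partial_t u,\,x_i\nabla_i u\rangle\,dx.
\]

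Next I multiply by the Pohozaev weight $r^{p-n-1}$ and rewrite $r^{p-n}I'(r)$ as $\tfrac{d}{dr}(r^{p-n}I(r))-(p-n)r^{p-n-1}I(r)$. The resulting linear combination $pe_{p,\varepsilon}-a|\nabla u|^2$ cancels exactly in the $|\nabla u|^p$-term (this is why the weight $r^{p-n}$ is forced), leaving only the $\varepsilon$-correction $\tfrac{p-2}{2}\varepsilon|\nabla u|^2 - \tfrac{n+1-p}{n+1}\varepsilon|\nabla u|^{n+1}$. Integrating from $\rho$ to $R$ and converting $\int_\rho^R r^{p-n}\int_{\partial B_r}(\cdot)\,d\sigma\,dr$ into the solid integral $\int_{B_R\setminus B_\rho}|x|^{p-n}(\cdot)\,dx$ by Fubini yields the identity in the stated form.

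The main obstacle is purely bookkeeping: carrying the three summands in $a(s)$ cleanly through the chain of integrations by parts and recognizing at each step that the ``weighted chain rule'' $\tfrac{a}{2}\nabla|\nabla u|^2=\nabla e_{p,\varepsilon}$ makes the boundary and bulk contributions combine into a total-derivative structure in $r$. No regularity issues arise since $u$ is a smooth solution of the parabolic system on the time slice under consideration, so every manipulation is classical; the argument is essentially algebraic, and the weight $r^{p-n}$ is dictated by the scaling of the leading $|\nabla u|^p$ piece of $e_{p,\varepsilon}$.
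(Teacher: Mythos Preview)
Your approach is exactly the one the paper uses: multiply (\ref{p-AF}) by the radial field $x_i\nabla_i u$, integrate over $B_r$, use that the second fundamental form term is orthogonal to $x_i\nabla_i u$, integrate by parts twice exploiting the chain rule $\tfrac{1}{2}a(|\nabla u|)\nabla|\nabla u|^2=\nabla e_{p,\varepsilon}(u)$, then multiply by $r^{p-1-n}$, recognize the total derivative $\tfrac{d}{dr}\bigl(r^{p-n}\!\int_{B_r}e_{p,\varepsilon}\,dx\bigr)$, and integrate from $\rho$ to $R$. The only difference is notational (you package $\varepsilon+|\nabla u|^{p-2}+\varepsilon|\nabla u|^{n-1}$ as $a(s)$ and name $I(r)$), and your remark that the weight $r^{p-n}$ is forced by the cancellation in the $|\nabla u|^p$ term is precisely what the computation $p\,e_{p,\varepsilon}-a|\nabla u|^2$ exhibits.
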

 \begin {proof} Without loss of generality, we assume that $x_0=0$. Multiplying  (\ref {p-AF}) by $x_i \nabla_i u$, we have
 \[
\int_{B_{r}}\left <\frac{\partial u}{\partial t},\,x_i  \nabla_i
u\right >-\left <div\left(  (\varepsilon +|\nabla u|^{p-2} + \varepsilon |\nabla
u|^{n-1})\nabla u\right),\, x_i
\nabla_i u\right >\,dx=0.\]
Integration by parts yields that
\begin{align*}
&\quad \int_{B_{r}}\left <\frac{\partial u}{\partial t},\,x_i  \nabla_i
u\right >\,dx-\frac 1 r\int_{\partial B_{r}}  (\varepsilon +|\nabla u|^{p-2} + \varepsilon |\nabla
u|^{n-1})|x_i \nabla_i u|^2  \,d\omega
\\
&=- \int_{B_{r}} (\varepsilon |\nabla u|^2+|\nabla u|^{p} + \varepsilon |\nabla
u|^{n+1})  +  \frac 12 (\varepsilon +|\nabla u|^{p-2} + \varepsilon |\nabla
u|^{n-1})
x_i  \nabla_{i} (|\nabla u|^2)    \,dx\nonumber\\
 &=  \int_{B_{r}} \left (\frac {(n-2)\varepsilon} 2 |\nabla u|^2+\frac {(n-p)} p|\nabla u|^{p} - \frac {\varepsilon}{n+1} |\nabla
u|^{n+1}\right )  \,dx\nonumber\\
&\quad -r\int_{\partial B_{r}}\left (\frac {\varepsilon} 2 |\nabla u|^2+\frac {1} p|\nabla u|^{p} + \frac {\varepsilon}{n+1} |\nabla
u|^{n+1}\right ) \,d\omega .
\end{align*}
Multiplying by $r^{p-1-n}$ both sides of the above identity, we have
\begin{align*}
&\quad \frac {d}{dr} \left [r^{p-n}\int_{B_{r}}\left (\frac {\varepsilon} 2 |\nabla u|^2+\frac {1} p|\nabla u|^{p} + \frac {\varepsilon}{n+1} |\nabla
u|^{n+1}\right )dx \,\right ]\\
&\quad -\frac {n(p-2)}{2p}r^{p-1-n}\int_{B_{r}} {\varepsilon}  |\nabla u|^2\,dx +r^{p-1-n}\int_{B_{r}} \frac {\varepsilon (n+1-p)}{n+1} |\nabla
u|^{n+1}dx
\\
&=- r^{p-1-n}\int_{B_{r}} \left (\frac {(n-2)\varepsilon} 2 |\nabla u|^2+\frac {(n-p)} p|\nabla u|^{p} - \frac {\varepsilon}{n+1} |\nabla
u|^{n+1}\right )  \,dx\nonumber\\
&\quad +r^{p-n}\int_{\partial B_{r}}\left (\frac {\varepsilon} 2 |\nabla u|^2+\frac {1} p|\nabla u|^{p} + \frac {\varepsilon}{n+1} |\nabla
u|^{n+1}\right ) \,d\omega \\
&= r^{p-n}\int_{\partial B_{r}}  (\varepsilon +|\nabla u|^{p-2} + \varepsilon |\nabla
u|^{n-1})| \partial_r u|^2  \,d\omega -\int_{B_{r}}r^{p-1-n}\left <\frac{\partial u}{\partial t},\,x_i  \nabla_i
u\right >\,dx.
\end{align*}
Then integrating with respect to $r$ from $\rho$ to $R$ yields  the
result.
\end{proof}

\begin{lem}\label{lemma 6.2}  Let $u_i\in C^{\infty}(M,N)$ be a minimizing sequence in the homotopy class $[u_0]$.
 Then there are a sequence of $\varepsilon_i\to 0$ and solutions $u_{\varepsilon_i}$ of equation (\ref{p-AF}) with initial value $u_i$ such that
 $u_{\varepsilon_i}( t)$ for all $t\in [0,\infty)$ is also a minimizing sequence in the same homotopy class.
 Moreover, there is a uniform $\tilde t\in [1/2,1]$ such that
 \begin{equation*}\lim_{i\to \infty}\int_M\frac  {\varepsilon_i} {n+1}|\nabla u_{\varepsilon_i}|^{n+1}(\cdot,\tilde t ) \,dv+\lim_{i\to \infty} \int_M \abs{\partial_t
u_{\varepsilon_i}(\cdot, \tilde t)}^2 dv =0.\end{equation*}
\end{lem}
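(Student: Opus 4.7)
The plan is to combine three pieces: a careful choice of $\varepsilon_i\to 0$ that keeps $E_{p,\varepsilon_i}(u_i)$ close to $\inf_{[u_0]} E_p$; global existence and the standard energy identity for the flow (\ref{p-AF}); and an $L^1$-to-almost-everywhere selection on the time slab $[1/2,1]$.

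Since each $u_i\in C^\infty(M,N)$ has finite $\|\nabla u_i\|_{L^2}$ and $\|\nabla u_i\|_{L^{n+1}}$, we may choose $\varepsilon_i\to 0$ so fast that
\[\frac{\varepsilon_i}{2}\int_M|\nabla u_i|^2\,dv+\frac{\varepsilon_i}{n+1}\int_M|\nabla u_i|^{n+1}\,dv\longrightarrow 0,\]
hence $E_{p,\varepsilon_i}(u_i)\to \inf_{[u_0]} E_p$. For each such fixed $\varepsilon_i>0$, equation (\ref{p-AF}) is a non-degenerate parabolic system (with uniform ellipticity constant $\varepsilon_i$), so standard parabolic theory (cf.\ \cite{Hung, FR}) yields a global smooth solution $u_{\varepsilon_i}:M\times[0,\infty)\to N$ with $u_{\varepsilon_i}(0)=u_i$. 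The continuity of $(x,t)\mapsto u_{\varepsilon_i}(x,t)$ realizes a homotopy from $u_i$ to $u_{\varepsilon_i}(t)$, so $u_{\varepsilon_i}(t)\in [u_0]$ for all $t\geq 0$. Testing (\ref{p-AF}) with $\partial_t u_{\varepsilon_i}$, and using that the $A(u)$-term is normal to $N$ while $\partial_t u_{\varepsilon_i}$ is tangent to $N$, we obtain the energy identity
\[E_{p,\varepsilon_i}(u_{\varepsilon_i}(s))+\int_0^s\int_M|\partial_t u_{\varepsilon_i}|^2\,dv\,dt=E_{p,\varepsilon_i}(u_i),\qquad s\geq 0.\]

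Combining this identity with the trivial bound $E_{p,\varepsilon_i}(u_{\varepsilon_i}(s))\geq E_p(u_{\varepsilon_i}(s))\geq \inf_{[u_0]} E_p$ shows that $E_p(u_{\varepsilon_i}(s))\to \inf_{[u_0]} E_p$ for each fixed $s\geq 0$, so every time-slice $\{u_{\varepsilon_i}(s)\}_i$ is itself a minimizing sequence in $[u_0]$. Subtracting the $L^p$-piece yields, for each fixed $s$,
\[\frac{\varepsilon_i}{2}\int_M|\nabla u_{\varepsilon_i}(s)|^2\,dv+\frac{\varepsilon_i}{n+1}\int_M|\nabla u_{\varepsilon_i}(s)|^{n+1}\,dv\longrightarrow 0,\]
and at the same time $\int_0^1\int_M|\partial_t u_{\varepsilon_i}|^2\,dv\,dt\to 0$. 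Setting $f_i(t):=\int_M|\partial_t u_{\varepsilon_i}(\cdot,t)|^2\,dv$, we have $\|f_i\|_{L^1(0,1)}\to 0$, so after passing to a subsequence $f_i(t)\to 0$ for a.e.\ $t\in[0,1]$; choose any $\tilde t\in[1/2,1]$ in this full-measure set. Then $\int_M|\partial_t u_{\varepsilon_i}(\cdot,\tilde t)|^2\,dv\to 0$, and the previous display at $s=\tilde t$ gives $\tfrac{\varepsilon_i}{n+1}\int_M|\nabla u_{\varepsilon_i}(\tilde t)|^{n+1}\,dv\to 0$, which is the second claim.

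The main delicate point is the selection of a single $\tilde t$ valid for the entire (sub)sequence: passing from $\|f_i\|_{L^1}\to 0$ to a.e.\ pointwise convergence along a subsequence is standard, but it must be done simultaneously with the already-established pointwise-in-$s$ convergence of the $\varepsilon_i$-remainder. The higher-order perturbation $\tfrac{\varepsilon}{n+1}|\nabla u|^{n+1}$ built into (\ref{p-pert}) is precisely what guarantees the initial approximation $E_{p,\varepsilon_i}(u_i)\to \inf_{[u_0]} E_p$ and the uniform control of the $L^{n+1}$-piece along the flow, so apart from this bookkeeping no further analytical input is required.
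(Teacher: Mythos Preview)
Your argument is correct and follows essentially the same route as the paper: choose $\varepsilon_i\to 0$ so that $E_{p,\varepsilon_i}(u_i)\to\inf_{[u_0]}E_p$, use global existence and the energy identity for the flow (\ref{p-AF}) to conclude that each time slice is minimizing and that the $\varepsilon_i$-remainder and $\int_0^1\!\int_M|\partial_t u_{\varepsilon_i}|^2$ vanish, then select $\tilde t\in[1/2,1]$. Your write-up is in fact slightly more careful than the paper's on two points: you make explicit the homotopy $u_i\simeq u_{\varepsilon_i}(t)$ via continuity of the flow, and you justify the choice of $\tilde t$ by passing to a subsequence so that $f_i(t)\to 0$ a.e.\ (the paper simply asserts the existence of such $\tilde t$).
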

\begin{proof} Since the minimizing sequence $u_i$ is smooth, there is a   sequence  $\varepsilon_i \to 0$ such that
    \begin{equation*}
        E_{p,\varepsilon_i}(u_i)\leq E_p(u_{\varepsilon_i} )+\frac{1}{i},
    \end{equation*}
which implies
    \begin{equation}\label{eqn:alpha}
        \lim_{i\to \infty} E_{p, \varepsilon_i}(u_i)=\lim_{i\to \infty} E_p(u_{\varepsilon_i})=\inf_{v\in [u_0]}E_p(v).
    \end{equation}
   Then  there is a unique solution $u_{\varepsilon_i}(x,t)$ to the flow (\ref {p-AF}) with initial value $u_{\varepsilon_i}(0)=u_i$. Similar to   Lemma \ref {EI}, we have
    \begin{equation*}
       E_{\varepsilon_i}(u_{\varepsilon_i}(\cdot,\tau))+\int_0^\tau \int_M \abs{\partial_t u_{\varepsilon_i}}^2 \, dv dt= E_{\varepsilon_i}(u_{\varepsilon_i}).
    \end{equation*}
This implies that  $u_{\varepsilon_i}(x,\tau )$ for $\tau$ is a minimizing sequence of $E$ in the homotopy class  $[u_0]$,
which yields
    \begin{equation}\label{eqn:harmonic}
      \lim_{i\to \infty}\int_M\frac  {\varepsilon_i} {n+1}|\nabla u_{\varepsilon_i}|^{n+1}(\cdot,\tau ) \,dv+ \int_0^\tau \int_M \abs{\partial_t u_{\varepsilon_i}}^2 dv dt=0.
    \end{equation}
Then there is a uniform $\tilde t\in [1/2,1]$ such that
\begin{equation*}\lim_{i\to \infty} \int_M \abs{\partial_t u_{\varepsilon_i}(\cdot, \tilde t)}^2 dv =0.
\end{equation*}
\end{proof}

\begin{proof}[Proof of Theorem \ref {Theorem 5}]
Let $u_i\in C^{\infty}(M,N)$ be a minimizing sequence in a homotopy class and $u$ its weak limit.
If $N$ is  a homogeneous manifold, we claim that $u$ is a
weak $p$-harmonic map from $M$ and $N$.

Let $X=(X^1,\cdots, X^L)$ be a Killing vector on $N \subset \R^L$ as in H\'elein \cite{Helein} and $u=(u^1,\cdots, u^L)\in N$.
Let $\varphi$ be a cut-off function compactly
supported in $M$. Since $u_\varepsilon$ is a solution of (\ref {p-AF}), we use $\varphi X(u)$ as a testing vector
to get
\begin{equation*}
    \int_{M} \langle \nabla_k(\varphi X(u_\varepsilon)),
      (\varepsilon +\abs{\nabla u_\varepsilon}^{p-2} + \varepsilon \abs{\nabla u_\varepsilon}^{n-1}) \nabla_k u_\varepsilon\rangle dv=-\int_M \langle\varphi X(u_\varepsilon),
\partial_t  u_{\varepsilon}\rangle .
\end{equation*}
Since $X$ is a Killing vector, it implies that $\sum_{l,m=1}^L \nabla_k u_{\varepsilon}^m \nabla_m X^l \nabla_k u_{\varepsilon}^l =0$, so
\begin{equation}\label{eqn:killing}
    \int_{M} \langle \nabla_k \varphi \, X(u_\varepsilon), (\varepsilon +\abs{\nabla u_\varepsilon}^{p-2} + \varepsilon \abs{\nabla u_\varepsilon}^{n-1})
     \nabla_k u_\varepsilon\rangle dv=-\int_M \langle\varphi X(u_\varepsilon), \partial_t u_{\varepsilon}\rangle.
\end{equation}

 Let $u$ be the weak limit of $u_{\varepsilon_i}$ in $W^{1,p}(M\times [0,1])$ by passing  to a subsequence if necessary.
    By  a compact result in \cite {CHH}, $|\nabla u_{\varepsilon_i}|^{p-2}\nabla u_{\varepsilon_i}$ converges weakly to $|\nabla u|^{p-2}\nabla u$ in $L^{p^*}$ with $\frac 1 p
+\frac 1 {p^*}=1$.
     Since $u_{\varepsilon_i}$ converges to $u$ strongly in $L^p$ and
$X$ is a smooth vector   on $N$, $X(u_{\varepsilon_i})$
converges to $X(u)$ strongly in $L^p$. Letting $\varepsilon_i$
go to zero in equation (\ref{eqn:killing}) and noting (\ref{eqn:harmonic}), we
have
\begin{equation*}
  \int_{M} \nabla_k \varphi \langle X(u), |\nabla u|^{p-2}\nabla_k u\rangle d\mu=0,
\end{equation*}
which implies
\begin{equation*}
  \int_{M} \langle \nabla_k (\varphi X(u)), |\nabla u|^{p-2}\nabla_k u\rangle d\mu=0
\end{equation*} due to the fact that $X$ is a Killing vector.
Since $N$ is a homogeneous space, we apply the construction of a Killing field $\{X_j\}$
by Helein \cite{Helein} and  choose $\varphi_j$ to obtain that
\begin{equation*}
  \sum_{j}\varphi_j X_j(u)
\end{equation*}
is any compactly supported vector field (along $u$).
This implies
that $u$ is a weak $p$-harmonic map. We know that $u$ is a weak solution to the $p$-harmonic map flow. It follows from (\ref{eqn:harmonic})
     that $u$ is a map independent of $t\in [0,1]$. Since $u_{\varepsilon_i}(x,t)$
converges
    weakly to $u(x,t)$ in $W^{1,2}(M\times [0,1])$. Hence, $u(\cdot, t)\equiv u(\cdot,0)$
    is a (weakly) $p$-harmonic map from $M$ to $N$.

We define
 $$\Sigma =\bigcap_{R>0} \left\{x_0\in \Omega : B_R(x_0)\subset  M,\quad \liminf_{\varepsilon_i\to 0}
  \frac 1 {R^{n-p}}\int_{B_R(x_0)} |\nabla u_{\varepsilon_i} |^p \,dx\geq \varepsilon_0\right
 \}
 $$
 for a sufficiently small constant $\varepsilon_0$. Then, $\mathcal
 H^{n-p}(\Sigma )<+\infty$.
For any $x_0\notin \Sigma$ with $B_{R_0}(x_0)\subset M \backslash \Sigma$,
for each $y\in B_{R_0/2}(x_0)$ and for each $\rho\in (0,R_0/2)$,
we have
 \begin{align}\label{mono}
 & \rho^{p-n} \int_{B_{\rho }(y)}|\nabla u|^p\,d  M\leq
 \lim_{\varepsilon_i\to 0}
\rho^{p-n}\int_{B_{ \rho}(y)} |\nabla u_{\varepsilon_i} |^p
\,d M <\varepsilon_0
  \end{align}
 for a sufficiently small constant $\varepsilon_0>0$.
 Since $u$ is a weakly $p$-harmonic map satisfying (\ref{mono}),  it follows from a similar proof of stationary $p$-harmonic maps into homogenous manifolds    (see  \cite
{TW})
 that $u$ belongs to $C^{1,\a}_{loc}(M\backslash \Sigma)$.
\end{proof}

\begin{acknowledgement}
 {The research  of the
 author was supported by the Australian Research Council
grant DP150101275.}
\end{acknowledgement}

\end{document}